\documentclass[11pt]{article}

\usepackage[a4paper,margin=1in]{geometry}
\usepackage{amsmath,amssymb,amsthm,mathtools}
\usepackage{enumitem}
\usepackage{booktabs}
\usepackage{array}
\usepackage{microtype}
\usepackage{cite}
\usepackage{comment}
\usepackage[hidelinks]{hyperref}
\usepackage{authblk}
\usepackage{xcolor}

\hypersetup{
  pdftitle={Boundary–Moment Universality and Curvature Corrections in Random Geometric Graphs on Riemannian Manifolds},
  pdfauthor={Taro Hayashi, Subaru Nomoto, Ryoichi Suzuki},
  pdfsubject={Universal second-order three-vertex expansions, curvature-sensitive estimation, and the degree-two threshold law for random geometric graphs on closed Riemannian manifolds},
  pdfkeywords={random geometric graphs, Riemannian manifolds, scalar curvature, Euler characteristic, geometric estimation, U-statistics, degree-two maximum-degree threshold, Poisson approximation}
}

\title{Boundary–Moment Universality and Curvature Corrections in Random Geometric Graphs on Riemannian Manifolds}
\author[1]{Taro Hayashi\thanks{Corresponding author. E-mail: \texttt{haya4taro@gmail.com}. ORCID: 0009-0005-0145-8042}}
\author[1]{Subaru Nomoto\thanks{E-mail: \texttt{snomoto@fc.ritsumei.ac.jp}. ORCID: 0009-0009-0701-8370}}
\author[2]{Ryoichi Suzuki\thanks{E-mail: \texttt{rsuzukimath@gmail.com}. ORCID: 0000-0001-9979-1882}}

\affil[1]{Department of Mathematical Sciences, College of Science and Engineering, Ritsumeikan University, 1-1-1 Noji-higashi, Kusatsu, Shiga 525-8577, Japan.}
\affil[2]{Department of Business Economics, School of Management, Tokyo University of Science, 1-11-2 Fujimi, Chiyoda, Tokyo 102-0071, Japan.}
\date{}

\newtheorem{theorem}{Theorem}[section]
\newtheorem{proposition}[theorem]{Proposition}
\newtheorem{lemma}[theorem]{Lemma}
\newtheorem{corollary}[theorem]{Corollary}
\theoremstyle{remark}
\newtheorem{remark}[theorem]{Remark}

\newcommand{\R}{\mathbb R}
\newcommand{\Vol}{\operatorname{Vol}}
\newcommand{\Scal}{\operatorname{Scal}}
\newcommand{\Ric}{\operatorname{Ric}}
\newcommand{\inj}{\operatorname{inj}}
\newcommand{\grad}{\operatorname{grad}}
\newcommand{\ind}{\mathbf 1}
\newcommand{\E}{\mathbb E}
\newcommand{\Pp}{\mathbb P}
\newcommand{\eps}{\varepsilon}
\numberwithin{equation}{section}

\begin{document}
\maketitle

\begin{abstract}

Let $(M,g)$ be a smooth, closed, connected $d$-dimensional Riemannian manifold, and let $X_1,\ldots,X_n$ be i.i.d.\ with common law $f\,d\mathrm{vol}_g$, where $f\in C^4(M)$ is strictly positive.
We derive a uniform intrinsic second-order expansion for symmetric three-vertex edge-indicator statistics supported on connected configurations, including the induced-path and triangle kernels. 
The second-order term separates density variation, normal-coordinate Jacobians, and the curvature-induced motion of the internal-chord boundary. 
Within this three-vertex connected symmetric class, a universal
boundary--moment identity reduces the kernel-dependent contribution to a
common intrinsic functional involving
$\int_M f\|\grad f\|_g^2\,d\mathrm{vol}_g$ and
$\int_M f^3\operatorname{Scal}_g\,d\mathrm{vol}_g$.
For the normalized path--triangle contrast, the Euclidean leading term cancels. We construct a consistent estimator of this intrinsic functional and, 
in a denser bandwidth regime, we prove an exact-expectation-centered root-\(n\) central limit theorem via the first Hoeffding projection.
For uniform sampling on a closed surface, the estimator consistently recovers the Euler characteristic.
We also study the threshold radius at which the maximum degree of a binomial random geometric graph first reaches two.
Using the active-triple intensity expansion and a dependency-graph Poisson approximation, we obtain the order-$n^{-3/d}$ correction to the log-survival law for $d>6$.

\end{abstract}

\medskip
\noindent\textbf{Keywords.}
Random geometric graphs; Riemannian manifolds; scalar curvature;
geometric $U$-statistics; degree-two maximum-degree threshold.

\medskip
\noindent\textbf{MSC 2020.} Primary 60D05; Secondary 05C80, 53C21, 60G55, 62G20.

\section{Introduction}

Random geometric graphs encode local metric information through pairwise
distance constraints \cite{Penrose2003}.  At scales $r\downarrow0$, a
neighborhood of a point on a smooth Riemannian manifold is approximately
Euclidean, so the leading behavior of many local graph statistics is
curvature-insensitive.
We ask how intrinsic geometry first enters second-order expansions of
symmetric three-vertex statistics supported on connected configurations and
how those corrections propagate to the degree-two maximum-degree threshold.

Write $[n]:=\{1,\ldots,n\}$.  Let $G_n(r)$ be the graph on the sample points
$\{X_1,\ldots,X_n\}$ in which two vertices are adjacent when their geodesic
distance is at most $r$, and let $\Delta(G)$ denote the maximum degree of a
graph $G$.  Define
\[
S_{2,n}:=\inf\{r>0:\Delta(G_n(r))\ge2\}.
\]
The natural sparse scale is
\[
r_{2,n}(t):=t n^{-3/(2d)},
\]
because the expected number of three-point degree-two witnesses is then of
order one; equivalently, $n^3r_{2,n}(t)^{2d}=t^{2d}$.  For $d>6$,
Theorem~\ref{thm:S2} gives the order-$n^{-3/d}$ intrinsic correction to the
survival law.  The main new geometric input is the curvature-induced motion
of the discontinuous internal-edge constraint.

\subsection{Relation to existing threshold and manifold expansions}
\label{subsec:relation-previous-work}

Classical Poisson limits for short-distance $U$-statistics and minimum
interpoint distances go back to
\cite{SilvermanBrown1978,KanagawaMochizukiTanaka1992}; general geometric
order-statistic and Poisson-approximation frameworks include
\cite{SchulteThale2012,DecreusefondSchulteThale2016}.
A recent preprint by Otsuka \cite{Otsuka2026} proves the leading fixed-$k$
threshold law in Euclidean space and a compound-Poisson limit for the point
process of degree-$k$ vertices.
For $k=2$, the feasible graphs are $P_3$
and $K_3$, and Otsuka's total Euclidean intensity is
\[
\mu^{\mathrm O}_{d,2}
=
A_2(d)\int_{\mathbb R^d}f^3\,dx.
\]
Thus his leading survival law agrees exactly with the tangent-space leading
term obtained below.  The two Poisson descriptions concern different
observables: Otsuka counts degree-two vertices, so a triangle contributes
three vertices, whereas our active-triple count assigns unit weight to each
$P_3$ or $K_3$ cluster.  Our contribution is the next-order curvature
correction on a closed Riemannian manifold.

First-order tangent-space limit theory for stabilizing point-process
functionals on manifolds was developed by Penrose and Yukich
\cite{PenroseYukich2013}; related manifold models include
\cite{BobrowskiOliveira2019,AuffingerLerarioLundberg2021,LerarioMulas2021}.
Subgraph counts and their Poisson and normal limits in Euclidean random
geometric graphs are classical \cite[Chapter~3]{Penrose2003}.
For our shrinking-radius kernels, we use the exact Hoeffding decomposition
\cite[Chapter~5]{Serfling1980} and verify projection dominance directly from
geometric overlap bounds.

There is also a distinct literature on recovering continuum curvature from
finite metric or graph data.  Hickok and Blumberg \cite{HickokBlumberg2023}
estimate scalar curvature from metric-ball volumes, while
\cite{vanDerHoornEtAl2023,GarciaTrillosWeber2026} study consistency of
Ollivier--Ricci curvature on random geometric graphs or data clouds.
Our target is instead the integrated density--curvature functional
$\mathcal G_3(f;g)$ obtained from two global motif counts; under uniform
sampling on a closed surface, Gauss--Bonnet converts it into the Euler
characteristic.  Scalar-curvature corrections for small geodesic balls are
classical \cite{Gray1973}, and related corrections occur in Riemannian
Poisson--Voronoi geometry \cite{CalkaChapronEnriquez2021}.
The new technical issue here is the curvature-induced motion of a
discontinuous internal-distance boundary.

\subsection{Contributions and scope}
\label{subsec:contributions-scope}

The main geometric result, Theorem~\ref{thm:three-point-master}, is a
second-order expectation expansion for any symmetric function of the three
edge indicators that vanishes on graphs with at most one edge.  This class is
precisely the two-dimensional span of the induced-path and triangle kernels.
The proof yields a stronger uniform anchored expansion, which is also used in
the threshold analysis.

The central structural result is
Proposition~\ref{prop:universal-boundary-moment}: the Jacobian and
moving-boundary coefficients are linked, and every admissible weight
$\mathfrak h$ in this three-vertex class satisfies
\[
\mathcal B_{\mathfrak h}(d)=\frac{d-1}{2}M_{\mathfrak h}(d).
\]
We prove this boundary--moment identity by a distributional divergence
argument in the displacement variable.

Here and throughout, $\grad h$ denotes the Riemannian gradient of a smooth
function $h$.  Define
\[
\mathcal G_3(f;g):=
\int_M f\|\grad f\|_g^2\,d\mathrm{vol}_g
+\frac16\int_M f^3\Scal_g\,d\mathrm{vol}_g.
\]
Theorem~\ref{thm:three-point-master} gives, for every admissible kernel,
\[
\mathbb E U_{\mathfrak h}(r)
=\binom n3 r^{2d}\left[
J_{\mathfrak h}(d)\int_M f^3\,d\mathrm{vol}_g
-\frac{3M_{\mathfrak h}(d)}{2d}r^2\mathcal G_3(f;g)
+o(r^2)\right].
\]
The scalar-curvature coefficient combines the normal-coordinate Jacobian
contribution $-M_{\mathfrak h}/(3d)$ with the moving-boundary contribution
$+M_{\mathfrak h}/(12d)$.  The latter is a one-sided shape derivative of a
discontinuous graph-induced kernel and is invisible in the Euclidean tangent
law.

For the active-triple kernel,
$J_{\mathfrak h_2}(d)/6=A_2(d)$ and
$M_{\mathfrak h_2}(d)/(4d)=C_2(d)$.
The normalized induced-path minus triangle contrast cancels the order-$r^{2d}$
Euclidean term and yields an estimator of $\mathcal G_3(f;g)$.
Theorem~\ref{thm:contrast-consistency} gives variance bounds and consistency,
while Theorem~\ref{thm:contrast-dense-clt} gives, in a denser bandwidth
regime, a root-$n$ central limit theorem centered at the exact expectation.
Without a quantitative higher-order bias expansion, we do not claim a
target-centered central limit theorem.  Under uniform sampling on a closed
surface, Corollary~\ref{cor:euler-inference} gives consistent recovery of the
Euler characteristic.

For the degree-two threshold, Theorem~\ref{thm:S2} gives the
order-$n^{-3/d}$ intrinsic correction to the log-survival law for $d>6$.
Its proof represents the survival event as a zero-count event for active
triples and uses a dependency-graph Chen--Stein bound to transfer the
expectation expansion to the zero-count law.  The restriction $d>6$ ensures
that the probabilistic approximation error is smaller than the geometric
$r^2$ correction.  We do not treat manifolds with boundary or the critical
dimension $d=6$, and we do not claim a general theorem for higher-degree
thresholds.

To the best of our knowledge, the one-sided moving-boundary coefficient,
the boundary--moment identity, and the zero-mass path--triangle estimator of
$\mathcal G_3(f;g)$ have not previously been derived.

Section~\ref{sec:setting} introduces the setting and
Section~\ref{sec:main-results} states the main results.
Sections~\ref{sec:preliminaries}--\ref{sec:statistical-inference}
develop the geometric, threshold, and statistical arguments.
The appendices contain auxiliary computations and trace details.

\section{Setting and notation}
\label{sec:setting}

Throughout, $(M,g)$ is a smooth, closed, connected $d$-dimensional Riemannian
manifold.  Let $d_g$ be the geodesic distance and $d\mathrm{vol}_g$ the
Riemannian volume measure, and write
\[
\Vol_g(M):=\int_M d\mathrm{vol}_g.
\]
Let
\[
X_1,\ldots,X_n\overset{\mathrm{i.i.d.}}{\sim} f\,d\mathrm{vol}_g,
\qquad
f\in C^4(M),\quad f>0,\quad \int_M f\,d\mathrm{vol}_g=1.
\]
For a smooth function $\varphi$ on $M$,
our Laplacian convention is
\[
\Delta_g\varphi=\operatorname{div}_g (\grad\varphi).
\]
Since $M$ is closed, integration by parts gives
\[
\int_M f\Delta_g f\,d\mathrm{vol}_g=-\int_M \|\grad f\|_g^2\,d\mathrm{vol}_g.
\]
Let $\nabla$ be the Levi-Civita connection with respect to $g$.
Our curvature convention is
\[
R(u,v)w
:=\nabla_u\nabla_vw-\nabla_v\nabla_uw-\nabla_{[u,v]}w,
\qquad
R_x(u,v,w,z):=\langle R_x(u,v)z,w\rangle_g.
\]
Thus $R_x(u,v,u,v)=\langle R_x(u,v)v,u\rangle_g$, and the unit
sphere has sectional curvature $+1$.

Let $\omega_d$ be the Euclidean volume of the unit ball in $\R^d$, and let
$d\sigma$ denote the standard unnormalized surface measure on $S^{d-1}$, so that
$\sigma(S^{d-1})=d\omega_d$.
Let
\[
\iota:=\inj(M,g)>0.
\]
Compactness also gives a uniform strong-convexity radius $\rho_{\mathrm c}>0$:
every ball $B_g(x,\rho_{\mathrm c})$ is strongly geodesically convex.  Put
\[
r_*:=\min\{\iota/10,\rho_{\mathrm c}/4\}.
\]
For a compact set $K\subset\mathbb R^d\times\mathbb R^d$, define
\[
R_K:=\sup_{(z,w)\in K}\max\{|z|,|z-w|\},\qquad
r_K:=\min\left\{r_*,\frac{\iota}{2(1+R_K)},
\frac{\rho_{\mathrm c}}{2(1+R_K)}\right\}.
\]
For a compactly supported cutoff $\chi$, we write $R_\chi$ and $r_\chi$
for the same quantities with $K=\operatorname{supp}\chi$.  Every local
statement below uses the corresponding support-dependent radius.  For
$x\in M$, let
\[
\mathcal F_x:=\{\tau:\mathbb R^d\to T_xM:\tau\text{ is an orthonormal
isometry}\},
\qquad
\mathcal F(M):=\{(x,\tau):x\in M,\ \tau\in\mathcal F_x\}.
\]
The orthonormal frame bundle $\mathcal F(M)$ is compact.  We use the notation
\[
\exp_{x,\tau}(z):=\exp_x(\tau z),
\qquad
R_{x,\tau}(u,v,w,z):=R_x(\tau u,\tau v,\tau w,\tau z).
\]
All estimates on fixed Euclidean compact sets are uniform in
$(x,\tau)\in\mathcal F(M)$.  The kernels used below are simultaneously
$O(d)$-invariant, so the resulting integrals are independent of the choice
of frame.

Local tangent variables are always defined by
\[
z_i=r^{-1}\exp_x^{-1}(x_i)
\]
inside the injectivity ball of $x$; thus $\exp_x^{-1}$ is used only locally.

We write $O(\cdot)$ and $o(\cdot)$ uniformly in the base point $x\in M$;
when a parameter $t$ appears in the threshold scale, all asymptotics are uniform
for $t$ in compact subsets of $[0,\infty)$.  Constants denoted by $C$ may change
from line to line and depend on $M,g,f,d$ and on the fixed compact range of $t$,
but never on $n$ or $r$.

The hypotheses $f\in C^4(M)$ and $f>0$ are standing assumptions adopted to
keep a single set of conditions throughout the paper; we do not optimize the
regularity or positivity assumptions theorem by theorem.  The $C^4$ condition
justifies the uniform normal-coordinate expansions used below.  Compactness
and positivity imply that the sampling density is bounded above and below away
from zero, so no tail or support-boundary localization issue arises.
All zero-count probability expansions are understood on compact $t$-ranges,
including $t=0$.  At $t=0$ the relevant cluster count is zero almost surely,
because the sampling law is nonatomic.  The displayed formulas are interpreted
using this endpoint convention and uniformity in $t$.

\section{Main results}
\label{sec:main-results}

The proofs of the statements in this section are deferred to the
subsequent sections.  After each main statement, we indicate where it is
established.

\subsection{Path and triangle graph functionals}
\label{subsec:three-point-statement}

Fix $\lambda_P,\lambda_K\in\mathbb R$ and define
\[
\mathfrak h(a,b,c)
:=
\lambda_P(ab+ac+bc)
+
(\lambda_K-3\lambda_P)abc,
\qquad
(a,b,c)\in\{0,1\}^3.
\]
Thus $\mathfrak h$ assigns weight $\lambda_P$ to a three-vertex path
$P_3$, weight $\lambda_K$ to a triangle $K_3$, and weight $0$ to every
disconnected three-vertex graph.  In particular, $\mathfrak h$ is symmetric
in its three arguments and
\[
\mathfrak h(a,b,c)=0
\qquad\text{whenever }a+b+c\le1.
\]
We refer to such a weight as an admissible symmetric weight.

Let $\binom{[n]}{3}$ be the family of all three-element subsets
$\{i,j,k\}\subset[n]$.  For
$\alpha=\{i,j,k\}\in\binom{[n]}{3}$, with $i<j<k$, put
\[
\xi_{\alpha,\mathfrak h}(r)
=\mathfrak h\!\left(
\ind_{\{d_g(X_i,X_j)\le r\}},
\ind_{\{d_g(X_i,X_k)\le r\}},
\ind_{\{d_g(X_j,X_k)\le r\}}
\right),
\qquad
U_{\mathfrak h}(r)
=\sum_{\alpha\in\binom{[n]}{3}}\xi_{\alpha,\mathfrak h}(r).
\]
For $z_1,z_2\in\mathbb R^d$, define
\[
H_{\mathfrak h}(z_1,z_2)
=\mathfrak h\!\left(
\ind_{\{|z_1|\le1\}},
\ind_{\{|z_2|\le1\}},
\ind_{\{|z_1-z_2|\le1\}}
\right),
\]
\[
\ell_{\mathfrak h}(a,b)
=\mathfrak h(a,b,1)-\mathfrak h(a,b,0),
\]
\[
\widetilde L_{\mathfrak h}(z_1,z_2)
:=\ell_{\mathfrak h}\!\left(
\ind_{\{|z_1|\le1\}},
\ind_{\{|z_2|\le1\}}
\right),
\qquad
L_{\mathfrak h}(z,w)
:=\widetilde L_{\mathfrak h}(z,z-w).
\]
Thus $\widetilde L_{\mathfrak h}$ takes two endpoint variables, whereas
$L_{\mathfrak h}$ takes an endpoint and a displacement.  The
connected-support assumption makes the following moments finite:
\[
J_{\mathfrak h}(d)
=\int_{(\mathbb R^d)^2}
H_{\mathfrak h}(z_1,z_2)\,dz_1\,dz_2,
\qquad
M_{\mathfrak h}(d)
=\int_{(\mathbb R^d)^2}
H_{\mathfrak h}(z_1,z_2)|z_1|^2\,dz_1\,dz_2.
\]
By symmetry in $z_1$ and $z_2$, equivalently,
\[
M_{\mathfrak h}(d)
=
\frac12
\int_{(\mathbb R^d)^2}
H_{\mathfrak h}(z_1,z_2)
\bigl(|z_1|^2+|z_2|^2\bigr)\,dz_1\,dz_2.
\]
For $d\ge2$,
\[
\mathcal B_{\mathfrak h}(d)
=\int_{S^{d-1}}\int_{\mathbb R^d}
L_{\mathfrak h}(z,w)
\bigl(|z|^2-(z\cdot w)^2\bigr)\,dz\,d\sigma(w).
\]

\begin{proposition}
\label{prop:universal-boundary-moment}
For every admissible symmetric weight $\mathfrak h$ and every $d\ge2$,
\[
\mathcal B_{\mathfrak h}(d)
=
\frac{d-1}{2}M_{\mathfrak h}(d).
\]
\end{proposition}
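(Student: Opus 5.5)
The plan is to prove the identity directly for a general admissible $\mathfrak h$, using the divergence theorem in the displacement variable $w$ with a vector field built to produce both the factor $d-1$ and the weight $|z|^2-(z\cdot w)^2$. (Since $\mathfrak h$ is linear in $(\lambda_P,\lambda_K)$, one could also check the path and triangle kernels separately, but the argument below does not need this.)

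\textbf{Change of variables and the vector field.} Write $z_1=z$ and $z_2=z-w$, which has unit Jacobian. Set
\[
\widehat H(z,w):=H_{\mathfrak h}(z,z-w)=\mathfrak h\bigl(\ind_{\{|z|\le1\}},\ind_{\{|z-w|\le1\}},\ind_{\{|w|\le1\}}\bigr),
\]
so that $M_{\mathfrak h}(d)=\int\!\int \widehat H(z,w)|z|^2\,dz\,dw$. For fixed $z$, define
\[
V_z(w):=|z|^2w-(z\cdot w)z .
\]
A direct computation gives
\[
\operatorname{div}_w V_z=d|z|^2-|z|^2=(d-1)|z|^2 .
\]
On the sphere $|w|=1$, with outward normal $w$, the normal component is $V_z\cdot w=|z|^2-(z\cdot w)^2$, which is exactly the weight in $\mathcal B_{\mathfrak h}$. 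On the sphere $|z-w|=1$, with normal $w-z$, one finds $V_z\cdot(w-z)=|z|^2|w|^2-(z\cdot w)^2$. Both weights are the Gram determinant of the triangle with vertices $0,z_1,z_2$, and this determinant is invariant under relabelling the vertices.

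\textbf{Divergence identity.} Since $\mathfrak h$ vanishes when at most one edge is present, $\widehat H$ is supported in the compact set where at least two of $|z|,|w|,|z-w|$ are at most $1$. Hence $\int\!\int\operatorname{div}_w(\widehat H V_z)\,dw\,dz=0$ in the sense of distributions. As a function of $w$, $\widehat H$ is piecewise constant, with jumps only across $|w|=1$ and $|w-z|=1$. The distributional divergence therefore splits as
\[
0=(d-1)M_{\mathfrak h}(d)-\mathrm{I}-\mathrm{II}.
\]
Here $\mathrm{I}$ is the surface integral over $|w|=1$ of the inward jump $\mathfrak h(a,b,1)-\mathfrak h(a,b,0)=L_{\mathfrak h}(z,w)$, weighted by $V_z\cdot w$; thus $\mathrm{I}=\mathcal B_{\mathfrak h}(d)$ after integrating over $z$. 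The term $\mathrm{II}$ is the analogous integral over $|z-w|=1$, whose jump is $\mathfrak h(a,1,c)-\mathfrak h(a,0,c)$.

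\textbf{Identifying $\mathrm{II}$ with $\mathcal B_{\mathfrak h}$.} Apply the measure-preserving involution $(z_1,z_2)\mapsto(z_1,z_1-z_2)$, which in the $(z,w)$ variables reads $w\mapsto z-w$. It preserves $|z_1|$, swaps the edges $|z_2|$ and $|z_1-z_2|$, and preserves the Gram weight. By the symmetry of $\mathfrak h$, the jump $\mathfrak h(a,1,c)-\mathfrak h(a,0,c)$ equals $\mathfrak h(a,c,1)-\mathfrak h(a,c,0)$, so this map carries $\mathrm{II}$ onto $\mathrm{I}$. Therefore $(d-1)M_{\mathfrak h}=2\mathcal B_{\mathfrak h}$, which is the claim.

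The main obstacle is to justify the distributional divergence theorem for the discontinuous $\widehat H$, and to fix the signs and normalizations of the two jump terms correctly (inward versus outward jump, and $d\sigma$ unnormalized). To do this rigorously, I would first mollify the three indicators, or equivalently apply the classical divergence theorem on each of the finitely many piecewise-smooth regions cut out by the two spheres. I would then check that the intersection set $\{|w|=1\}\cap\{|w-z|=1\}$ has surface measure zero, so it contributes nothing. As a sanity check on the signs, I would verify the identity numerically for the triangle kernel in $d=2$.
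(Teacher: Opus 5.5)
Your proposal is correct and follows essentially the same route as the paper: the same displacement coordinates, the same field $V(z,w)=|z|^2w-(z\cdot w)z$ with $\operatorname{div}_wV=(d-1)|z|^2$, and the same two-sphere jump decomposition with matching signs. The paper justifies that decomposition slicewise for $z\ne0$ via the $BV$ Gauss--Green formula and product rule, which is the rigorous step you planned to do with a piecewise divergence theorem. Your identification of the second flux term with $\mathcal B_{\mathfrak h}(d)$ via $w\mapsto z-w$ is exactly the paper's substitution $u=z-w$; the paper additionally applies the reflection $(z,u)\mapsto(-z,-u)$, which your version shows is unnecessary.
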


The proof is given in Section~\ref{sec:preliminaries}, following
Lemma~\ref{lem:BV-jump-formula}.  There the identity is obtained from the
$BV$ jump formula by a distributional divergence argument in the
displacement variable.

Define
\[
\mathcal G_3(f;g):=
\int_M f\|\grad f\|_g^2\,d\mathrm{vol}_g
+\frac16\int_M f^3\Scal_g\,d\mathrm{vol}_g.
\]
The next theorem records the common intrinsic functional for all admissible
weights, including signed linear combinations.

\begin{theorem}
\label{thm:three-point-master}
Assume $d\ge2$ and the standing hypotheses on $(M,g,f)$.
For every admissible symmetric weight $\mathfrak h$ and every $n\ge3$,
as $r\downarrow0$,
\[
\E U_{\mathfrak h}(r)
=
\binom n3 r^{2d}
\left[
J_{\mathfrak h}(d)\int_M f^3\,d\mathrm{vol}_g
-\frac{3M_{\mathfrak h}(d)}{2d}r^2\mathcal G_3(f;g)
+o(r^2)
\right].
\]
The remainder in the bracket is independent of $n$, so the expansion
also holds along arbitrary integer sequences $n=n(r)\ge3$.
\end{theorem}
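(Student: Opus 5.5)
Since the $X_i$ are i.i.d., $\E U_{\mathfrak h}(r)=\binom n3 I(r)$, where
\[
I(r)=\int_{M^3}\mathfrak h\bigl(\ind_{\{d_g(x,y)\le r\}},\ind_{\{d_g(x,z)\le r\}},\ind_{\{d_g(y,z)\le r\}}\bigr)f(x)f(y)f(z)\,d\mathrm{vol}_g^{\otimes3}.
\]
So the statement reduces to showing $I(r)=r^{2d}[J_{\mathfrak h}\int f^3-\tfrac{3M_{\mathfrak h}}{2d}r^2\mathcal G_3+o(r^2)]$, and independence from $n$ is then automatic. The first reduction is to anchor at the vertex $x$. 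The kernel vanishes unless at least two edges are present, so every contributing configuration lies in $B_g(x,2r)^2$. Changing variables $y=\exp_{x,\tau}(rz_1)$, $z=\exp_{x,\tau}(rz_2)$ gives
\[
I(r)=r^{2d}\int_M f(x)\int_{(\R^d)^2}\mathfrak h\bigl(\ind_{\{|z_1|\le1\}},\ind_{\{|z_2|\le1\}},\ind_{\{d_g(y,z)\le r\}}\bigr)f(y)f(z)\,\theta_x(rz_1)\theta_x(rz_2)\,dz_1\,dz_2\,d\mathrm{vol}_g(x).
\]
The supports are compact, so $r\le r_\chi$ applies uniformly over $\mathcal F(M)$. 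Here two of the edges are exact Euclidean indicators, and only the internal chord $d_g(y,z)$ is distorted.

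Next I would expand the three smooth factors to second order, uniformly in $(x,\tau)$ by compactness.
\begin{itemize}
\item The Jacobian: $\theta_x(rz)=1-\tfrac{r^2}{6}\Ric_x(\tau z,\tau z)+O(r^3)$.
\item The density: $f(\exp_x(r\tau z))=f+r\,df(\tau z)+\tfrac{r^2}{2}\nabla^2f(\tau z,\tau z)+O(r^3)$.
\item The chord: $d_g(y,z)^2=r^2|z_1-z_2|^2-\tfrac{r^4}{3}R_{x,\tau}(z_1,z_2,z_1,z_2)+O(r^5)$, where the quartic term is the standard normal-coordinate expansion of distance between two points.
\end{itemize}
Hence $\ind_{\{d_g\le r\}}=\ind_{\{|z_1-z_2|^2\le 1+\frac{r^2}{3}R(z_1,z_2,z_1,z_2)+O(r^3)\}}$.

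The smooth terms are then integrated against the Euclidean kernel $H_{\mathfrak h}$. Odd terms vanish by the $O(d)$-invariance $z\mapsto -z$. The term $df(z_1)df(z_2)$ and the quadratic terms are evaluated by isotropy, using $\int H z_1^{i}z_1^{j}=\tfrac{M}{d}\delta^{ij}$ and $\int H z_1^iz_2^j=\tfrac{M'}{d}\delta^{ij}$. Here $M'=\int H\,z_1\cdot z_2$, which I would express through $M$ using $|z_1-z_2|^2$-moments or the symmetry between the three vertices. Collecting terms gives
\[
\int f^3\Bigl(\tfrac{\Delta f}{f}\text{ and }\tfrac{|\grad f|^2}{f^2}\text{ terms}\Bigr)-\tfrac{M}{3d}\int f^3\Scal.
\]
Integrating by parts with $\int f^2\Delta f=-2\int f|\grad f|^2$ reduces the density part to a multiple of $\int f|\grad f|^2$.

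\textbf{Main obstacle: the moving-boundary term.} The indicator is discontinuous, so I would compute the derivative in $r^2$ of
\[
\int \mathfrak h(a,b,\ind_{\{|z_1-z_2|^2\le 1+\epsilon q\}})\,dz_1\,dz_2 .
\]
Writing $\mathfrak h(a,b,c)=\mathfrak h(a,b,0)+c\,\ell_{\mathfrak h}(a,b)$, only $L_{\mathfrak h}$ feels the boundary. In coordinates $(z,w)=(z_1,z_1-z_2)$, the boundary $|w|=1$ moves radially by $\tfrac{\epsilon}{2}q$, with $q=\tfrac13 R(z,w,z,w)$. Its first-order effect is
\[
\tfrac{r^2}{6}\int_{S^{d-1}}\int L_{\mathfrak h}(z,w)\,R_{x,\tau}(z,w,z,w)\,dz\,d\sigma(w).
\]
The pieces $\ind_{\{|z|\le1\}}$ are unaffected, and $\{|z-w|\le1\}$ is only hit on a null set, which justifies differentiating under the integral via dominated convergence plus a $BV$ jump argument. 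Averaging over $\tau$ (equivalently, by $O(d)$-invariance of $L_{\mathfrak h}$), $R(z,w,z,w)$ averages to $\tfrac{\Scal}{d(d-1)}\bigl(|z|^2|w|^2-(z\cdot w)^2\bigr)$, which produces $\tfrac{\Scal}{6d(d-1)}\mathcal B_{\mathfrak h}(d)$. Proposition~\ref{prop:universal-boundary-moment} converts this to $\tfrac{M}{12d}\Scal$, so the total scalar-curvature coefficient is $-\tfrac{M}{3d}+\tfrac{M}{12d}=-\tfrac{M}{4d}=-\tfrac{3M}{2d}\cdot\tfrac16$, as required.

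The hard part is making the boundary derivative rigorous with an $o(r^2)$ remainder uniform over $\mathcal F(M)$. The $O(r^3)$ perturbation of the threshold changes the kernel only on a shell of measure $O(r^3)$, and the cross terms between the boundary shift and the density/Jacobian corrections are $O(r^3)$ as well. Finally, I would check that the density-gradient coefficient equals $-\tfrac{3M}{2d}$ times the coefficient of $\int f|\grad f|^2$; this relies on the $M'$ relation, which I expect to follow from the permutation symmetry of $\mathfrak h$.
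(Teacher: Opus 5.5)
Your proposal is correct and follows essentially the same route as the paper's proof via Proposition~\ref{prop:three-point-anchored}. The steps match: the anchored normal-coordinate change of variables; Taylor expansion of the density and Jacobian integrated against $H_{\mathfrak h}$, using isotropic moments with $\int H_{\mathfrak h}\,z_1\cdot z_2=M_{\mathfrak h}/2$; isolation of the moving internal chord through $\ell_{\mathfrak h}$ to obtain $\frac{r^2}{6}\int L_{\mathfrak h}R$; and rotational contraction plus the boundary--moment identity, giving $-\frac{M_{\mathfrak h}}{3d}+\frac{M_{\mathfrak h}}{12d}$. The differences are minor technical variants: you justify the shell limit by dominated convergence, using that the trace $\{|z-\theta|=1\}$ is null, where the paper uses the $\eta$-tubular good/bad-set argument of Lemma~\ref{lem:graph-boundary}; and you contract the curvature by Haar-averaging over frames rather than via Lemma~\ref{lem:rotational-contraction}.
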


The proof is given in Subsection~\ref{subsec:three-point-master}.
There we first establish the stronger uniform anchored expansion in
Proposition~\ref{prop:three-point-anchored}; integrating that expansion
over the anchor point yields Theorem~\ref{thm:three-point-master}.

\begin{remark}
Proposition~\ref{prop:universal-boundary-moment} reduces all second-order
kernel dependence to the two Euclidean moments
$J_{\mathfrak h}(d)$ and $M_{\mathfrak h}(d)$.
\end{remark}

\subsection{Statistical estimation from the path--triangle contrast}
\label{subsec:statistical-statements}

For the induced path and triangle, set
\[
\mathfrak h_{P_3}(a,b,c):=\ind_{\{a+b+c=2\}},
\qquad
\mathfrak h_{K_3}(a,b,c):=\ind_{\{a+b+c=3\}}.
\]
For $G\in\{P_3,K_3\}$, write
\[
H_G(z_1,z_2):=H_{\mathfrak h_G}(z_1,z_2),
\qquad
J_G(d):=J_{\mathfrak h_G}(d),
\qquad
M_G(d):=M_{\mathfrak h_G}(d).
\]
Define the normalized contrast kernel and statistic by
\[
\mathfrak c_d
:=\frac{\mathfrak h_{P_3}}{J_{P_3}(d)}
 -\frac{\mathfrak h_{K_3}}{J_{K_3}(d)},
\qquad
C_n(r):=U_{\mathfrak c_d}(r).
\]
Thus
\[
C_n(r)
=\frac{U_{P_3}(r)}{J_{P_3}(d)}
 -\frac{U_{K_3}(r)}{J_{K_3}(d)}.
\]
Define
\[
D_d
:=\frac{M_{P_3}(d)}{J_{P_3}(d)}
 -\frac{M_{K_3}(d)}{J_{K_3}(d)},
\qquad
\gamma_d:=\frac{3D_d}{2d}.
\]
The explicit Euclidean moment formulas proved later imply
$D_d>0$ and $\gamma_d>0$
for every $d\ge2$; see Lemma~\ref{lem:contrast-nondegeneracy}.
Since $J_{\mathfrak c_d}(d)=0$ and
$M_{\mathfrak c_d}(d)=D_d$, Theorem~\ref{thm:three-point-master}
gives
\begin{equation}
\E C_n(r)
=-\gamma_d\binom n3 r^{2d+2}\mathcal G_3(f;g)
+o\!\left(n^3r^{2d+2}\right).
\label{eq:contrast-mean}
\end{equation}
Define
\begin{equation}
\widehat{\mathcal G}_{3,n}(r)
:=-\frac{C_n(r)}
{\gamma_d\binom n3 r^{2d+2}}.
\label{eq:G3-estimator}
\end{equation}

\begin{theorem}
\label{thm:contrast-consistency}
There are $C<\infty$ and $r_0>0$ such that, for
$n\ge3$ and $0<r<r_0$,
\begin{equation}
\operatorname{Var} C_n(r)
\le C\left(
n^3r^{2d}
+n^4r^{3d}
+n^5r^{4d+4}
\right).
\label{eq:contrast-variance-bound}
\end{equation}
Consequently,
\begin{equation}
\operatorname{Var}\widehat{\mathcal G}_{3,n}(r)
\le C\left(
\frac1{n^3r^{2d+4}}
+\frac1{n^2r^{d+4}}
+\frac1n
\right).
\label{eq:G3-estimator-variance-bound}
\end{equation}
Let $r_n\downarrow0$ and assume
\begin{equation}
n^3r_n^{2d+4}\rightarrow\infty,
\qquad
n^2r_n^{d+4}\rightarrow\infty.
\label{eq:contrast-consistency-bandwidth}
\end{equation}
Then
\[
\widehat{\mathcal G}_{3,n}(r_n)
\xrightarrow{\Pp}\mathcal G_3(f;g).
\]
\end{theorem}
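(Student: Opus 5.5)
We use the Hoeffding/overlap decomposition of the variance of the order-three $U$-statistic $C_n(r)=\sum_\alpha\xi_{\alpha,\mathfrak c_d}(r)$. Writing $\operatorname{Var}C_n(r)=\sum_{\alpha,\beta}\operatorname{Cov}(\xi_\alpha,\xi_\beta)$ and grouping pairs by $s=|\alpha\cap\beta|\in\{0,1,2,3\}$, independence kills $s=0$. The pairs with $s=3$, $s=2$, $s=1$ number $O(n^3)$, $O(n^4)$, $O(n^5)$ respectively. Since $|\mathfrak c_d|\le C$ and $\xi_\alpha$ vanishes unless the three points form a connected configuration at scale $r$, bounded density gives $\E\xi_\alpha^2\le C r^{2d}$. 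This yields the $n^3r^{2d}$ term.

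For $s=2$, the union $\alpha\cup\beta$ has four points, and $\xi_\alpha\xi_\beta\neq0$ forces all four into a connected cluster of diameter $O(r)$. The probability of this is $O(r^{3d})$ (a base point plus three displacements of size $O(r)$), so the covariance is at most $Cr^{3d}$ in absolute value. This gives $n^4r^{3d}$.

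The main obstacle is the case $s=1$, where the bound must be $r^{4d+4}$ rather than the crude $r^{4d}$. Here $\operatorname{Cov}(\xi_\alpha,\xi_\beta)=\operatorname{Var}\bigl(\psi_r(X)\bigr)$ with the first Hoeffding projection
\[
\psi_r(x):=\E[\xi_{\alpha,\mathfrak c_d}(r)\mid X_i=x].
\]
I plan to apply the uniform anchored expansion, Proposition~\ref{prop:three-point-anchored}, which underlies Theorem~\ref{thm:three-point-master}. Taking $x$ as anchor, it gives
\[
\psi_r(x)=r^{2d}\bigl[a_{\mathfrak c}(x)+r^2 b_{\mathfrak c}(x)+o(r^2)\bigr]
\]
uniformly in $x$. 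The leading coefficient $a_{\mathfrak c}(x)$ is a linear combination of the Euclidean integrals of $H_{P_3}$, $H_{K_3}$ with vertex roles, multiplied by $f(x)^3$. One point needs checking: the anchored vertex is an arbitrary vertex, not a symmetrized one. I will therefore symmetrize over which of the three vertices is $X_i$. Each vertex-anchored integral is then a fixed multiple of $J_G(d)$, because under translation the three vertex anchorings of $H_G$ have equal mass. With the normalization in $\mathfrak c_d$, this gives $a_{\mathfrak c}\equiv0$. Hence $|\psi_r(x)|\le Cr^{2d+2}$ uniformly, and the covariance is $O(r^{4d+4})$. If the anchored proposition only controls the integrated-over-anchor remainder, I would instead redo a pointwise Taylor expansion in normal coordinates. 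That expansion uses $f\in C^4$ and the metric expansion $\sqrt{\det g}=1+O(r^2|z|^2)$, together with the fact that the boundary motion of $\{d_g\le r\}$ is $O(r^2)$ in relative terms. Odd first-order terms $\nabla f\cdot z$ vanish only after integration, but the bound $O(r^{2d+1})$ from them must still be removed. That requires the $O(d)$ symmetry $z\mapsto -z$ of each kernel, which kills the linear term exactly and leaves $O(r^{2d+2})$.

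Summing the three cases gives \eqref{eq:contrast-variance-bound}. Dividing by $(\gamma_d\binom n3 r^{2d+2})^2\asymp n^6r^{4d+4}$ gives \eqref{eq:G3-estimator-variance-bound}. For consistency, \eqref{eq:contrast-mean} shows that $\E\widehat{\mathcal G}_{3,n}(r_n)\to\mathcal G_3(f;g)$, since the remainder $o(n^3r^{2d+2})$ is uniform in $n$ by Theorem~\ref{thm:three-point-master}. Under \eqref{eq:contrast-consistency-bandwidth}, the variance bound tends to $0$, and Chebyshev's inequality gives convergence in probability.
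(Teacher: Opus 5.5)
Your proposal is correct and follows essentially the same route as the paper. You use the overlap decomposition by $|\alpha\cap\beta|$. For single overlaps you use the identity $\operatorname{Cov}(\xi_\alpha,\xi_\beta)=\operatorname{Var}(m_{\mathfrak c_d,r}(X_1))$ together with the uniform pointwise anchored expansion of Proposition~\ref{prop:three-point-anchored}; since $J_{\mathfrak c_d}(d)=0$, that expansion directly gives $\sup_x|m_{\mathfrak c_d,r}(x)|=O(r^{2d+2})$. You then rescale and apply Chebyshev with \eqref{eq:contrast-mean}. Your symmetrization over the anchor vertex and your fallback Taylor argument are not needed, because the kernel is symmetric in the three points and the proposition's remainder is already uniform in $x$.
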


For clarity in the proof section, an equivalent reformulation of
Theorem~\ref{thm:contrast-consistency} is restated as
Theorem~\ref{thm:contrast-consistency-revise} in
Section~\ref{sec:statistical-inference}.  That theorem is proved there,
and hence Theorem~\ref{thm:contrast-consistency} follows from it.

To state the projection-dominant limit, put
\begin{equation}
q_f(x):=
\frac{1}{2d}\|\grad f(x)\|_g^2
+\frac1d f(x)\Delta_gf(x)
-\frac{1}{4d}f(x)^2\Scal_g(x),
\label{eq:qf-definition}
\end{equation}
and
\begin{equation}
\psi_f(x)
:=q_f(x)+\frac{3}{2d}\mathcal G_3(f;g).
\label{eq:psi-f-definition}
\end{equation}
Integration by parts gives
\[
\int_M q_f f\,d\mathrm{vol}_g
=-\frac{3}{2d}\mathcal G_3(f;g),
\qquad
\int_M\psi_f f\,d\mathrm{vol}_g=0.
\]

\begin{theorem}
\label{thm:contrast-dense-clt}
Let $r_n\downarrow0$ and suppose
\begin{equation}
n r_n^{d+4}\rightarrow\infty.
\label{eq:dense-bandwidth-condition}
\end{equation}
Then
\begin{equation}
\sqrt n\left(
\widehat{\mathcal G}_{3,n}(r_n)
-\E\widehat{\mathcal G}_{3,n}(r_n)
\right)
\xrightarrow{\mathcal D}
N(0,\sigma_f^2),
\label{eq:G3-dense-clt}
\end{equation}
where
\begin{equation}
\sigma_f^2
:=4d^2\int_M\psi_f(x)^2f(x)\,d\mathrm{vol}_g(x).
\label{eq:G3-asymptotic-variance}
\end{equation}
The limit is understood as degenerate when $\sigma_f^2=0$.
\end{theorem}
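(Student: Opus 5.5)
The plan is a Hoeffding decomposition of the degree-three $U$-statistic $C_n(r_n)=U_{\mathfrak c_d}(r_n)$, followed by a classical Lindeberg--L\'evy CLT for the linear part. Write $\xi_\alpha=\xi_{\alpha,\mathfrak c_d}(r_n)$ and set
\[
\theta_r:=\E\xi_\alpha,\qquad
\phi_r(x):=\E\bigl[\xi_{\{1,2,3\}}\mid X_1=x\bigr]-\theta_r .
\]
The decomposition is
\[
C_n-\E C_n=\binom{n-1}{2}\sum_{i=1}^n\phi_r(X_i)+R_n ,
\]
where $R_n$ collects the degenerate second- and third-order components. These components are orthogonal, with $\operatorname{Var}R_n\le C(n^2\zeta_2+n^3\zeta_3)$, where $\zeta_2\le \E[\E(\xi\mid X_1,X_2)^2]$ and $\zeta_3\le\E\xi^2$. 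The normalization is $\gamma_d\binom n3 r^{2d+2}$. After scaling, the linear term becomes
\[
-\frac{3}{\gamma_d r^{2d+2}}\,\frac1n\sum_i\phi_r(X_i),
\]
so the goal is to show that
\[
-\frac{3}{\gamma_d}\,r^{-2d-2}\phi_r(x)\;\longrightarrow\;2d\,\psi_f(x)
\]
uniformly in $x$. It is then enough to check that $\sqrt n\,R_n/(n^3r^{2d+2})\to0$ in $L^2$.

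\emph{Step 1: the projection.} Here I use the uniform anchored expansion, Proposition~\ref{prop:three-point-anchored}. With the anchor at $x=X_1$, it gives
\[
\E[\xi\mid X_1=x]=r^{2d}\bigl[J_{\mathfrak c_d}f(x)^3\cdot(\text{anchor-weighted analogue})+r^2 Q(x)+o(r^2)\bigr]
\]
uniformly in $x$. Since $J_{\mathfrak c_d}(d)=0$, only the $r^{2d+2}$ term survives, and its coefficient $Q$ is a linear combination of $f\Delta f$, $\|\grad f\|^2$ and $f^2\Scal$ with weights $M_{\mathfrak c_d}=D_d$, fixed via Proposition~\ref{prop:universal-boundary-moment}. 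Care is needed because the anchored vertex is not symmetric with the other two. The second moment weighting $|z_1|^2+|z_2|^2$ versus $|z_1-z_2|^2$ must be symmetrized over which of the three vertices is the anchor; averaging over the three roles is what produces the specific coefficients in $q_f$. I would verify that the result is $-\tfrac{\gamma_d}{3}\cdot 2d\,q_f(x)\,r^{2d+2}$ (up to the sign convention), and consistency with the stated integral $\int q_f f=-\tfrac{3}{2d}\mathcal G_3$ serves as a check on the constants. Centering by $\theta_r=-\gamma_d r^{2d+2}\mathcal G_3/\ldots$ then adds the constant $\tfrac{3}{2d}\mathcal G_3$, which yields $\psi_f$. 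The factor $2d$ in $\sigma_f^2=4d^2\int\psi_f^2 f$ comes from this normalization. Lindeberg holds because the summands are uniformly bounded after scaling, and $\operatorname{Var}$ converges to $\sigma_f^2$ by uniform convergence.

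\emph{Step 2: the remainder, which I expect to be the main obstacle.} The crude bounds are $\zeta_3\le Cr^{2d}$ (support of connected triples) and $\zeta_2\le Cr^{3d}$ (two fixed points within $2r$, third in a ball). Then
\[
\frac{n\operatorname{Var}R_n}{n^6r^{4d+4}}\le C\Bigl(\frac{1}{n^3r^{2d+4}}+\frac{1}{n^2r^{d+4}}\Bigr),
\]
matching the first two terms of \eqref{eq:G3-estimator-variance-bound}. Both tend to zero under $nr^{d+4}\to\infty$: the second term is $(nr^{d+4})^{-1}\cdot n^{-1}$, and the first is $(nr^{d+4})^{-2}\cdot n^{-1}r^{4}$. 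The delicate point is that the projection term must be extracted exactly rather than bounded. The cancellation $J_{\mathfrak c_d}=0$ makes $\phi_r$ of order $r^{2d+2}$ rather than $r^{2d}$, so the anchored expansion must hold uniformly in $x$ with an $o(r^2)$ remainder. This is exactly what Proposition~\ref{prop:three-point-anchored} provides, and I would invoke it rather than re-derive it. If $\sigma_f^2=0$, the same estimates give convergence to $0$ in $L^2$.
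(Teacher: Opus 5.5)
Your proposal is correct and follows the paper's proof essentially step for step. The shared steps are the Hoeffding decomposition of $C_n(r_n)$ and the anchored expansion (Proposition~\ref{prop:three-point-anchored}), which with $J_{\mathfrak c_d}(d)=0$ gives $h_{1,r}(x)=D_dr^{2d+2}\psi_f(x)+o(r^{2d+2})$ uniformly. The remainder is controlled by the crude bounds $\E h_{2,r}^2=O(r^{3d})$ and $\E h_{3,r}^2=O(r^{2d})$. For the linear part you use Lindeberg--Feller for bounded summands, whereas the paper compares in $L^2$ with the fixed i.i.d.\ sum $-2dn^{-1}\sum_i\psi_f(X_i)$; the two are interchangeable.

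Two bookkeeping slips should be fixed.

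\emph{Remainder variance.} In Step~2 the second-order part contributes $\binom n2(n-2)^2\E h_{2,r}^2\asymp n^4\zeta_2$, not $n^2\zeta_2$. Your display for $n\operatorname{Var}R_n/(n^6r^{4d+4})$ is actually the un-multiplied bound for $\operatorname{Var}\widehat{\mathcal G}_{3,n}$, so it is too small by a factor $n$. The correct quantity is $C\bigl(n^{-2}r_n^{-2d-4}+n^{-1}r_n^{-d-4}\bigr)$. It still tends to zero, but the second term is exactly $(nr_n^{d+4})^{-1}$, so the hypothesis is used sharply rather than with slack. The first term requires $nr_n^d=(nr_n^{d+4})r_n^{-4}\to\infty$.

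\emph{Sign and anchor roles.} Since $m_{\mathfrak c_d,r}=+D_dr^{2d+2}q_f+o(r^{2d+2})$, the scaled projection $-\tfrac{3}{\gamma_d}r^{-2d-2}h_{1,r}$ converges to $-2d\psi_f$, not $+2d\psi_f$; this is harmless for $\sigma_f^2$. Also, no averaging over anchor roles is needed. The kernel is symmetric, so $\E[\xi\mid X_1=x]=r^{2d}\mathcal I_{\mathfrak c_d}(r,x)/f(x)$ is read off directly from the proposition, and the role symmetry is already used inside its tensor-moment computation.
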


For clarity in the proof section, an equivalent reformulation of
Theorem~\ref{thm:contrast-dense-clt} is restated as
Theorem~\ref{thm:contrast-dense-clt-revise} in
Section~\ref{sec:statistical-inference}.  That theorem is proved there,
and hence Theorem~\ref{thm:contrast-dense-clt} follows from it.

\begin{remark}
The central limit theorem is centered at the exact expectation.
Since the second-order expansion provides no quantitative rate for
\[
\E\widehat{\mathcal G}_{3,n}(r_n)-\mathcal G_3(f;g),
\]
we do not claim a target-centered central limit theorem or confidence
intervals for $\mathcal G_3(f;g)$.
\end{remark}

\begin{corollary}
\label{cor:euler-inference}
Assume $d=2$ and $f\equiv V^{-1}$, where
$V:=\Vol_g(M)$.  Then
\[
J_{P_3}(2)=M_{P_3}(2)=\frac{9\sqrt3\pi}{4},
\quad
J_{K_3}(2)=\frac{\pi(4\pi-3\sqrt3)}4,
\quad
\gamma_2=\frac{\pi}{4\pi-3\sqrt3}.
\]
Let
\[
\widehat F_{3,n}(r)
:=\frac{U_{K_3}(r)}
{\binom n3 r^4J_{K_3}(2)},
\qquad
\widehat V_n(r):=
\begin{cases}
\widehat F_{3,n}(r)^{-1/2},
&\widehat F_{3,n}(r)>0,\\
1,
&\widehat F_{3,n}(r)=0.
\end{cases}
\]
Define
\begin{equation}
\widehat\chi_n(r)
:=\frac{3}{2\pi}\widehat V_n(r)^3
\widehat{\mathcal G}_{3,n}(r)
=-\frac{3(4\pi-3\sqrt3)}{2\pi^2}
\frac{\widehat V_n(r)^3C_n(r)}
{\binom n3r^6}.
\label{eq:euler-estimator}
\end{equation}
If $r_n\downarrow0$ satisfies
\begin{equation}
n^2r_n^6\rightarrow\infty,
\label{eq:surface-consistency-bandwidth}
\end{equation}
then
\[
\widehat V_n(r_n)\xrightarrow{\Pp}V,
\qquad
\widehat\chi_n(r_n)\xrightarrow{\Pp}\chi(M).
\]
Consequently, since $\chi(M)$ is integer-valued, rounding
$\widehat\chi_n(r_n)$ to the nearest integer recovers $\chi(M)$
with probability tending to one.
\end{corollary}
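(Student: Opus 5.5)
The corollary has three parts: the explicit constants for $d=2$, consistency of $\widehat V_n$, and consistency of $\widehat\chi_n$. I would prove them in that order.

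\textbf{Step 1: explicit constants.} For $P_3$ I use symmetry. The three connected labelled paths are obtained by permuting which vertex is the center, so $J_{P_3}=3\int\ind_{\{|z_1|\le1,|z_2|\le1,|z_1-z_2|>1\}}$. Write $A:=\pi^2$ for the measure of $\{|z_1|,|z_2|\le1\}$ and $B:=\pi(\pi-\tfrac{3\sqrt3}{4})$ for the measure of the triangle configurations; $B$ is $\pi$ times the lens area $2\pi/3-\sqrt3/2$ of two unit disks at distance $|z_1|$, averaged over $z_1$. Then
\[
J_{K_3}=B=\frac{\pi(4\pi-3\sqrt3)}{4},
\qquad
J_{P_3}=3(A-B)-\text{(correction from non-centered orderings)}.
\]
I would redo this cleanly by splitting $H_{P_3}$ according to which vertex is the center. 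Centered at the origin, the contribution is $A-B$. Centered at $z_1$ or at $z_2$, the contributions are equal measure-preserving images of it. This gives $J_{P_3}=3(A-B)=9\sqrt3\pi/4$.

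For $M_{P_3}$ I average $|z_1|^2+|z_2|^2$. When the center is at the origin I use the radial integral of the lens area; when the center is elsewhere, $|z_1|$ is at most $2$. I will check that the sum equals $J_{P_3}$; this is a routine polar integral. For the contrast I then compute $D_2=M_{P_3}/J_{P_3}-M_{K_3}/J_{K_3}$ from Lemma~\ref{lem:contrast-nondegeneracy}'s explicit formulas, and verify $\gamma_2=3D_2/4=\pi/(4\pi-3\sqrt3)$. The second displayed form of \eqref{eq:euler-estimator} then follows by substituting $\gamma_2$ into \eqref{eq:G3-estimator}.

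\textbf{Step 2: consistency of $\widehat V_n$.} Theorem~\ref{thm:three-point-master} with $\mathfrak h=\mathfrak h_{K_3}$ and $f\equiv V^{-1}$ gives
\[
\E\widehat F_{3,n}(r)=\int_M f^3=V^{-2}+O(r^2).
\]
The same variance argument as in Theorem~\ref{thm:contrast-consistency}, applied to the single kernel $\mathfrak h_{K_3}$, gives
\[
\operatorname{Var}U_{K_3}\le C\bigl(n^3r^4+n^4r^6+n^5r^8\bigr).
\]
The last term is now $n^5r^{4d}$ rather than $n^5r^{4d+4}$, because the mean does not cancel. Hence
\[
\operatorname{Var}\widehat F_{3,n}\le C\bigl(n^{-3}r^{-4}+n^{-2}r^{-2}+n^{-1}\bigr)\to0
\]
under \eqref{eq:surface-consistency-bandwidth}. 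Therefore $\widehat F_{3,n}\to V^{-2}>0$ in probability, so $\Pp(\widehat F_{3,n}>0)\to1$. The continuous mapping theorem then gives $\widehat V_n\to V$.

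\textbf{Step 3: consistency of $\widehat\chi_n$.} For $d=2$, the two bandwidth conditions \eqref{eq:contrast-consistency-bandwidth} read $n^3r^8\to\infty$ and $n^2r^6\to\infty$. The second is assumed, and it implies the first because $n^3r^8=(n^2r^6)^{3/2}r^{-1}$. So Theorem~\ref{thm:contrast-consistency} gives $\widehat{\mathcal G}_{3,n}\to\mathcal G_3(f;g)$ in probability.

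With $f$ constant, $\grad f=0$ and $\Scal_g=2K$, so
\[
\mathcal G_3=\frac16V^{-3}\int_M 2K=\frac{2\pi\chi(M)}{3V^3}
\]
by Gauss--Bonnet. Slutsky's lemma then gives
\[
\widehat\chi_n=\frac{3}{2\pi}\widehat V_n^3\widehat{\mathcal G}_{3,n}\to\frac{3}{2\pi}V^3\cdot\frac{2\pi\chi(M)}{3V^3}=\chi(M).
\]
Rounding recovers $\chi(M)$ with probability tending to one, since $\Pp(|\widehat\chi_n-\chi(M)|<1/2)\to1$.

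\textbf{Main obstacle.} I expect the hardest part to be the explicit planar integrals, in particular verifying $M_{P_3}(2)=J_{P_3}(2)$ and the exact value of $\gamma_2$. If Lemma~\ref{lem:contrast-nondegeneracy} supplies closed forms for $M_G(d)$ and $J_G(d)$, I would specialize those to $d=2$ rather than integrate directly.
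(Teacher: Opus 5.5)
Your proposal is correct and follows the paper's proof step for step. The steps are: the constants from the path--triangle moment table specialized to $d=2$; the triangle expansion together with the general overlap variance bound, giving $\widehat F_{3,n}(r_n)\to V^{-2}$; continuous mapping for $\widehat V_n$; Theorem~\ref{thm:contrast-consistency}, whose $d=2$ bandwidth conditions you correctly derive from $n^2r_n^6\to\infty$; and finally Gauss--Bonnet and Slutsky.

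You left the verification of $M_{P_3}(2)=J_{P_3}(2)$ undone. The paper closes it by substituting $T_2(2)=\pi(8\pi-9\sqrt3)/12$ into $M_{P_3}(2)=2\pi^2-3T_2(2)$. Separately, your description of $B$ conflates the lens area at distance $|z_1|$, which varies with $|z_1|$, with its value $2\pi/3-\sqrt3/2$ at distance one. Your final value $B=T_0(2)=\pi(4\pi-3\sqrt3)/4$ is nevertheless correct.
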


For clarity in the proof section, an equivalent reformulation of
Corollary~\ref{cor:euler-inference} is restated as
Corollary~\ref{cor:euler-inference-revise} in
Section~\ref{sec:statistical-inference}.  That corollary is proved there,
and hence Corollary~\ref{cor:euler-inference} follows from it.

\subsection{Degree-two threshold: \texorpdfstring{$k=2$}{k=2}}
\label{subsec:k2-statement}

For $z_1,z_2\in\R^d$, define
\[
a=\ind_{\{|z_1|\le1\}},
\qquad
b=\ind_{\{|z_2|\le1\}},
\qquad
c=\ind_{\{|z_1-z_2|\le1\}}.
\]
Define
\[
\mathfrak h_2(a,b,c):=\ind_{\{a+b+c\ge2\}}.
\]
The indicator that the three-point configuration
$\{0,z_1,z_2\}$ has maximum degree at least $2$ is
\[
H_2(z_1,z_2)
=\mathfrak h_2(a,b,c)
=ab+ac+bc-2abc.
\]
Define the Euclidean constants
\[
A_2(d)
:=\frac16\int_{(\mathbb R^d)^2}
H_2(z_1,z_2)\,dz_1\,dz_2,
\qquad
C_2(d)
:=\frac1{4d}\int_{(\mathbb R^d)^2}
H_2(z_1,z_2)|z_1|^2\,dz_1\,dz_2.
\]
Both are positive.  Explicit lens-volume and one-dimensional formulas are
given in Section~\ref{sec:k2} and
Appendix~\ref{app:k2-one-dimensional}.

\begin{theorem}
\label{thm:S2}
Under the standing assumptions, assume $d>6$ and define
$r_{2,n}(t):=t n^{-3/(2d)}$.
Then, as $n\to\infty$, for every $T<\infty$, uniformly for
$0\le t\le T$,
\[
\begin{aligned}
\log\Pp\{S_{2,n}>r_{2,n}(t)\}
={}&-A_2(d)t^{2d}\int_M f^3\,d\mathrm{vol}_g\\
&+C_2(d)t^{2d+2}n^{-3/d}
\left\{
\begin{aligned}
&\int_M f\|\grad f\|_g^2\,d\mathrm{vol}_g\\
&+\frac16\int_M f^3\Scal_g\,d\mathrm{vol}_g
\end{aligned}
\right\}
+o(n^{-3/d}).
\end{aligned}
\]
\end{theorem}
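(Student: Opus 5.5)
We reduce Theorem~\ref{thm:S2} to a Poisson approximation for a count of active triples, and then feed in the second-order expectation expansion of Theorem~\ref{thm:three-point-master}, used in its uniform anchored form (Proposition~\ref{prop:three-point-anchored}).

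\emph{Zero-count reformulation.} Put $r=r_{2,n}(t)$. A graph has $\Delta\ge2$ exactly when some vertex has two neighbours, that is, when some triple $\{i,j,k\}$ induces $P_3$ or $K_3$. Hence
\[
\{S_{2,n}>r\}=\{W_n=0\},\qquad W_n:=U_{\mathfrak h_2}(r).
\]
We use the identity $\mathfrak h_2=\mathfrak h_{P_3}+\mathfrak h_{K_3}$, which is admissible with $\lambda_P=\lambda_K=1$.

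\emph{Mean expansion.} Theorem~\ref{thm:three-point-master} gives
\[
\E W_n=\binom n3 r^{2d}\Bigl[J_{\mathfrak h_2}\!\int f^3-\tfrac{3M_{\mathfrak h_2}}{2d}r^2\mathcal G_3+o(r^2)\Bigr].
\]
Substitute $\binom n3=\frac{n^3}{6}(1+O(n^{-1}))$, $n^3r^{2d}=t^{2d}$ and $r^2=t^2n^{-3/d}$, together with $J_{\mathfrak h_2}/6=A_2$ and $M_{\mathfrak h_2}/(4d)=C_2$. This yields
\[
\E W_n=A_2t^{2d}\!\int f^3-C_2t^{2d+2}n^{-3/d}\mathcal G_3+o(n^{-3/d})+O(n^{-1}).
\]
Since $d>6$, we have $n^{-1}=o(n^{-3/d})$, so the $O(n^{-1})$ term is absorbed. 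Uniformity in $t\in[0,T]$ follows because the remainder is uniform in $r$ and $t^{2d}$ is bounded on $[0,T]$.

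\emph{Poisson approximation.} We apply the Arratia--Goldstein--Gordon dependency-graph Chen--Stein bound to the indicators $\xi_\alpha$, $\alpha\in\binom{[n]}3$. Join $\alpha$ and $\beta$ when $\alpha\cap\beta\ne\emptyset$; indicators on disjoint triples are independent, so this is a valid dependency graph. The resulting error terms are:
\begin{itemize}
\item $b_1\lesssim n^3\cdot n^2\,(r^{2d})^2=O(n^{-1})$, by bounding $|N_\alpha|\lesssim n^2$ and $p_\alpha\lesssim r^{2d}$ using the density upper bound and volume comparison;
\item $b_2$, which involves $\E\xi_\alpha\xi_\beta$ for $\alpha\ne\beta$ overlapping. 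If $|\alpha\cap\beta|=2$, the four points form a connected cluster, giving a contribution $\lesssim n^4r^{3d}=t^{3d}n^{-1/2}$. If $|\alpha\cap\beta|=1$, the union is still connected through the shared vertex, giving $\lesssim n^5r^{4d}=t^{4d}n^{-1}$.
\end{itemize}
Hence $|\Pp\{W_n=0\}-e^{-\E W_n}|\lesssim n^{-1/2}$ uniformly for $t\le T$. For $d>6$ we have $n^{-1/2}=o(n^{-3/d})$. This is exactly where the hypothesis $d>6$ enters, and it is the main obstacle: the geometric correction has size $n^{-3/d}$, and the generic $n^4r^{3d}$ overlap term must be beaten. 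The point requiring the most care is the two-shared-vertex configuration, which must be bounded by $n^{-1/2}$ uniformly, with no logarithmic loss.

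\emph{Passing to logarithms.} The quantity $\E W_n$ is bounded on $[0,T]$. Therefore $e^{-\E W_n}\ge c_T>0$, and
\[
\log\Pp\{W_n=0\}=-\E W_n+O(n^{-1/2}).
\]
At $t=0$ both sides vanish, by the endpoint convention. Combining this with the mean expansion gives the stated formula with remainder $o(n^{-3/d})$.
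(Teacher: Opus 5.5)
Your proposal is correct and follows the paper's proof essentially step for step. The paper likewise uses $\{S_{2,n}>r\}=\{N_2(r)=0\}$ and expands $\E N_2(r_{2,n}(t))$ via Theorem~\ref{thm:three-point-master} for $\mathfrak h_2=\mathfrak h_{P_3}+\mathfrak h_{K_3}$, getting uniformity in $t$ from the nondecreasing modulus of Proposition~\ref{prop:three-point-anchored}. It then bounds the dependency-graph Chen--Stein error by $C(n^4r^{3d}+n^5r^{4d})=O(n^{-1/2})$ and passes to logarithms via Lemma~\ref{lem:zero-transfer}, using $d>6$ and treating $t=0$ separately, just as you do.
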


The proof is completed in Section~\ref{sec:k2}.
More precisely, Proposition~\ref{prop:N2-zero-count} gives the
corresponding zero-count expansion for the active-triple count
$N_2(r)$, and Theorem~\ref{thm:S2} follows from that proposition together
with the identity
\[
\{S_{2,n}>r\}=\{N_2(r)=0\}.
\]

\begin{corollary}
Under the hypotheses of Theorem~\ref{thm:S2}, if
$f\equiv \Vol_g(M)^{-1}$, then, for every $T<\infty$, uniformly for
$0\le t\le T$,
\[
\log\Pp\{S_{2,n}>t n^{-3/(2d)}\}
=-\frac{A_2(d)t^{2d}}{\Vol_g(M)^2}
+\frac{C_2(d)t^{2d+2}}{6\Vol_g(M)^3}
\left(
\int_M\Scal_g\,d\mathrm{vol}_g
\right)n^{-3/d}
+o(n^{-3/d}).
\]
\end{corollary}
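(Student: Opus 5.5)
The corollary follows by specializing Theorem~\ref{thm:S2} to the uniform density $f\equiv V^{-1}$ with $V:=\Vol_g(M)$. Nothing new is needed beyond evaluating the three integrals in the theorem.

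First, since $\int_M d\mathrm{vol}_g=V$, the leading integral is
\[
\int_M f^3\,d\mathrm{vol}_g=V^{-3}\cdot V=V^{-2}.
\]
This turns the first term into $-A_2(d)t^{2d}/\Vol_g(M)^2$. Second, $f$ is constant, so $\grad f\equiv0$ and the gradient integral $\int_M f\|\grad f\|_g^2\,d\mathrm{vol}_g$ vanishes. Third, the curvature integral becomes
\[
\frac16\int_M f^3\Scal_g\,d\mathrm{vol}_g=\frac{1}{6V^3}\int_M\Scal_g\,d\mathrm{vol}_g.
\]
Substituting these into the braces of Theorem~\ref{thm:S2} gives the correction term
\[
\frac{C_2(d)t^{2d+2}}{6\Vol_g(M)^3}\Bigl(\int_M\Scal_g\,d\mathrm{vol}_g\Bigr)n^{-3/d}.
\]

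It remains to check the hypotheses. The constant function $V^{-1}$ is $C^\infty$, hence $C^4$. It is strictly positive and integrates to one, so the standing assumptions hold. We keep $d>6$, and the statement is uniform in $0\le t\le T$ exactly as in the theorem. The $o(n^{-3/d})$ remainder is inherited unchanged, since the theorem's remainder depends only on $(M,g,f,d,T)$, and here $f$ is fixed.

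No step is a genuine obstacle; the only point requiring care is the arithmetic of the powers of $V$, namely $V^{-3}\cdot V=V^{-2}$ in the leading term and $V^{-3}$ in the curvature term.
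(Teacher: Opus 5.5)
Your proposal is correct and matches the paper's argument: the paper also obtains this corollary by substituting $f\equiv\Vol_g(M)^{-1}$ into Theorem~\ref{thm:S2}, using $\grad f=0$, $\int_M f^3\,d\mathrm{vol}_g=\Vol_g(M)^{-2}$ and $\int_M f^3\Scal_g\,d\mathrm{vol}_g=\Vol_g(M)^{-3}\int_M\Scal_g\,d\mathrm{vol}_g$. Your additional check that the constant density satisfies the standing hypotheses is a harmless completion of the same one-line specialization.
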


This corollary follows immediately from Theorem~\ref{thm:S2} by taking
$f\equiv\Vol_g(M)^{-1}$.  Indeed,
\[
\grad f=0,
\qquad
\int_M f^3\,d\mathrm{vol}_g
=\Vol_g(M)^{-2},
\]
and
\[
\int_M f^3\Scal_g\,d\mathrm{vol}_g
=
\Vol_g(M)^{-3}
\int_M\Scal_g\,d\mathrm{vol}_g.
\]

\begin{remark}[Why $d>6$]
The geometric correction is of order $n^{-3/d}$, whereas the
largest dependency-graph error is $O(n^{-1/2})$.  Thus $d>6$ ensures
$n^{-1/2}=o(n^{-3/d})$.
\end{remark}

\section{Preliminaries}
\label{sec:preliminaries}

\subsection{Chord expansion and boundary variation}

Let $\mathcal F(M)$ denote the orthonormal frame bundle of $M$.  Thus an
element $(x,\tau)\in\mathcal F(M)$ consists of a point $x\in M$ and a
linear isometry
\[
\tau:\mathbb R^d\longrightarrow T_xM.
\]
We write
\[
\exp_{x,\tau}(u):=\exp_x(\tau u),
\]
and pull back the curvature tensor to $\mathbb R^d$ by setting
\[
R_{x,\tau}(a,b,c,d)
:=
R_x(\tau a,\tau b,\tau c,\tau d).
\]
Since $M$ is closed, the frame bundle $\mathcal F(M)$ is compact.

Define the rescaled two-point squared-distance function by
\[
\Phi_{r,x,\tau}(z,w)
:=
r^{-2}d_g\!\left(
\exp_{x,\tau}(rz),
\exp_{x,\tau}(r(z-w))
\right)^2.
\]
In Euclidean space, this function is exactly $|w|^2$.  We first identify
the exact fourth-order curvature correction and only then use Taylor's
theorem to control the remainder.  This distinction is important because
the moving-boundary coefficient depends on the precise quartic curvature
term.

All derivatives with respect to the variables
$u,v,z,w\in\mathbb R^d$ are ordinary Euclidean Fréchet derivatives.
For a linear map $A:\mathbb R^m\to\mathbb R^n$, we denote its operator
norm by
\[
\|A\|
:=
\sup_{|\xi|=1}|A\xi|,
\]
where $|\cdot|$ denotes the standard Euclidean norm.

\begin{lemma}
\label{lem:chord-expansion}
Let $K\subset\mathbb R^d\times\mathbb R^d$ be compact.
There exist constants $r_K>0$ and $C_K>0$ and a family of functions
$\mathcal R_{r,x,\tau}$, continuously differentiable in the $w$ variable,
such that, for
$0<r<r_K$, $(x,\tau)\in\mathcal F(M)$, and $(z,w)\in K$,
one has
\[
\Phi_{r,x,\tau}(z,w)
=
|w|^2
-\frac{r^2}{3}R_{x,\tau}(z,w,z,w)
+r^3\mathcal R_{r,x,\tau}(z,w),
\]
and
\[
\sup_{\substack{
0<r<r_K,\ (x,\tau)\in\mathcal F(M)\\
(z,w)\in K
}}
\left(
|\mathcal R_{r,x,\tau}(z,w)|
+
\|\mathrm D_w\mathcal R_{r,x,\tau}(z,w)\|
\right)
\le C_K.
\]
\end{lemma}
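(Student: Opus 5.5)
The plan is to treat $\Phi_{r,x,\tau}$ as a smooth function of $(r,z,w)$ jointly, including $r=0$, and read off its Taylor coefficients in $r$. Set
\[
F(x,\tau;p,q):=d_g\bigl(\exp_{x,\tau}(p),\exp_{x,\tau}(q)\bigr)^2 .
\]
For $|p|,|q|<\rho_{\mathrm c}/2$ both points lie in a strongly convex ball inside the injectivity radius. Hence $d_g^2$ is smooth there, and $F$ is smooth in $(p,q)$, uniformly in $(x,\tau)\in\mathcal F(M)$ by compactness of the frame bundle. By choosing $r_K\le \rho_{\mathrm c}/(2(1+R_K))$, as in the definition of $r_K$, we ensure $|rz|,|r(z-w)|<\rho_{\mathrm c}/2$ for $(z,w)\in K$. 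Then
\[
\Phi_{r,x,\tau}(z,w)=r^{-2}F(x,\tau;rz,r(z-w))
\]
is smooth on $(0,r_K)\times K$. Moreover, $F$ vanishes to second order at $p=q$, so $\Phi$ extends smoothly to $r\in(-r_K,r_K)$.

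The next step is the classical normal-coordinate expansion of the squared distance:
\[
d_g(\exp_x p,\exp_x q)^2=|p-q|^2-\tfrac13 R_x(p,q,p,q)+O(|p|,|q|)^5 .
\]
I would justify it either by citing the standard expansion of the metric in normal coordinates, $g_{ij}=\delta_{ij}-\tfrac13R_{ikjl}y^ky^l+O(|y|^3)$, combined with a Jacobi-field/variation argument, or by citing it directly (e.g.\ from Gray's work or the standard literature on $d^2$ in normal coordinates). The sign must match the stated convention, under which the unit sphere has sectional curvature $+1$. I would confirm it on $S^2$: for nearby points the chord is shorter than in the flat case, giving $-\tfrac13K(|p|^2|q|^2-\langle p,q\rangle^2)$.

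Substituting $p=rz$ and $q=r(z-w)$ gives $p-q=rw$. The quartic term is $R(rz,r(z-w),rz,r(z-w))=r^4R(z,w,z,w)$, by multilinearity and the antisymmetry $R(z,z,\cdot,\cdot)=0$. There is also no cubic term, since the odd-order term of $F$ at order three vanishes identically: $F(p,q)$ is invariant under $(p,q)\mapsto(-p,-q)$ only up to the metric, but more robustly the third-order Taylor coefficient of $d^2$ at the diagonal base point $0$ is zero in normal coordinates, because the Christoffel symbols vanish at $x$.

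With this, the remainder is
\[
\mathcal R_{r}(z,w)=r^{-3}\Bigl(\Phi_r-|w|^2+\tfrac{r^2}{3}R(z,w,z,w)\Bigr).
\]
By Taylor's theorem with integral remainder in $r$, applied to the smooth function $r\mapsto\Phi_{r}(z,w)$, this equals $\tfrac12\int_0^1(1-s)^2\partial_r^3\Phi_{sr}(z,w)\,ds$. The $r$-expansion of $\Phi$ is $|w|^2+0\cdot r-\tfrac{r^2}{3}R+\dots$, so the orders $r^1$ and $r^2$ are exactly matched. That representation is $C^1$ in $w$, with $\mathrm D_w\mathcal R$ given by differentiating under the integral. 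Both $|\partial_r^3\Phi|$ and $\|\mathrm D_w\partial_r^3\Phi\|$ are bounded on the compact set $[0,r_K]\times K\times\mathcal F(M)$, because they are continuous functions of jointly smooth data (smooth dependence of $\exp$ and $d_g^2$ on base point and frame). This yields the uniform constant $C_K$.

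The main obstacle is the second step: making the quartic coefficient exactly $-\tfrac13R(z,w,z,w)$ with the paper's sign convention, and verifying that no cubic term appears. I would first try the Jacobi-field expansion of $|\mathrm d\exp_x|$ along the segment from $p$ to $q$, checking the result against the round sphere. Finally, smoothness in $r$ at $r=0$ uses only that $F$ is $C^5$, which holds because $g$ is smooth (it does not depend on $f\in C^4$).
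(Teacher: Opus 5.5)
Your proposal is correct and has the same skeleton as the paper's proof: a fourth-order expansion of $d_g^2$ in normal coordinates, then the rescaling $p=rz$, $q=r(z-w)$ with a Taylor remainder. The work is distributed differently, though.

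The step you flag as the main obstacle is the one the paper proves rather than cites. Starting from the normal-coordinate metric expansion, it compares the coordinate geodesic $\gamma$ from $u$ to $v$ with the segment $\lambda(t)=u+tq$, $q=v-u$. The geodesic equation with $\Gamma(y)=O(|y|)$ gives $\|\gamma-\lambda\|_{C^1}=O(s^3)$, where $s=|u|+|v|$. The paper then writes $D(u,v)$ as the energy of $\gamma$. The cross term $2\int_0^1\langle q,h'\rangle\,dt$, with $h=\gamma-\lambda$, vanishes because $h(0)=h(1)=0$. Finally, $R(q,\lambda(t),q,\lambda(t))=R(u,v,u,v)$. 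This one computation fixes the sign of the quartic term and rules out a cubic term.

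If you follow your Jacobi-field sketch instead, note a limitation. Integrating $|\mathrm d\exp_x|$ along the image of the straight segment only bounds $d_g^2$ from above, since that curve is not a geodesic. The matching lower bound needs exactly this $C^1$ comparison of $\gamma$ with $\lambda$, or the criticality of $\gamma$ for the energy.

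Drop your $(p,q)\mapsto(-p,-q)$ remark. The geodesic symmetry at $x$ is not an isometry for general metrics, and the remark is unnecessary once the $O(s^5)$ expansion is in hand.

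For the remainder, the paper applies Taylor's theorem in $(u,v)$ to $E=D-|u-v|^2+\frac13R(u,v,u,v)$. This gives $|E|\le Cs^5$ and $|\mathrm DE|\le Cs^4$, and the paper sets $\mathcal R=r^{-5}E(rz,r(z-w))$. Your version extends $\Phi$ smoothly to $r=0$ and writes $\mathcal R=\frac12\int_0^1(1-s)^2\partial_r^3\Phi(sr,z,w)\,ds$. This is equivalent and slightly cleaner: the expansion is then needed only to identify the first three $r$-Taylor coefficients. Uniform bounds on $\mathcal R$ and $\mathrm D_w\mathcal R$, and even smoothness in $w$, follow from compactness of $[0,r_K]\times K\times\mathcal F(M)$.
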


\begin{proof}
For a continuous map $f:[0,1]\to\mathbb R^d$, we define
$\|f\|_{C^0([0,1])}
:=
\sup_{0\le t\le1}|f(t)|$.
For $f\in C^1([0,1];\mathbb R^d)$, we set
$\|f\|_{C^1([0,1])}
:=
\|f\|_{C^0([0,1])}
+
\|\dot f\|_{C^0([0,1])}$.

For each $(x,\tau)\in\mathcal F(M)$, we use the normal coordinates induced
by $\tau$ and set
\[
D_{x,\tau}(u,v)
:=
d_g\!\left(
\exp_{x,\tau}(u),
\exp_{x,\tau}(v)
\right)^2.
\]
Since $M$ is closed, its convexity radius is positive.  Hence there exists
$s_0>0$ such that, for every $(x,\tau)\in\mathcal F(M)$ and every
$u,v\in\mathbb R^d$ satisfying $|u|+|v|<s_0$,
the points $\exp_{x,\tau}(u)$ and $\exp_{x,\tau}(v)$ lie in a common
strongly convex normal neighborhood.  In particular, the minimizing
geodesic between them is unique, and $D_{x,\tau}(u,v)$ depends smoothly
on $(x,\tau,u,v)$ in this region.
Fix $0<s_1<s_0$ and restrict to the compact parameter set
\[
\left\{
(x,\tau,u,v):
(x,\tau)\in\mathcal F(M),\quad
|u|+|v|\le s_1
\right\}.
\]
Since this parameter set is compact, all derivatives of
$D_{x,\tau}(u,v)$ needed below are uniformly bounded on it.

We first identify the fourth-order jet of $D_{x,\tau}$.  The standard
normal-coordinate
metric expansion (see, for example, \cite{Gray2004,Chavel2006,Petersen2016})
gives, under our curvature convention,
\begin{equation}
 g_y(a,b)=\langle a,b\rangle
 -\frac13R_{x,\tau}(a,y,b,y)
 +O(|y|^3|a||b|),
 \label{eq:normal-metric-two-point}
\end{equation}
uniformly in $(x,\tau)$. 
Let $q=v-u$, let $\lambda(t)=u+tq$, and let $\gamma$ be
the coordinate representation of the minimizing geodesic from $u$ to $v$, affinely parametrized on $[0,1]$. 
The normal-coordinate metric is uniformly equivalent to the Euclidean
metric on the restricted parameter set.  Since $\gamma$ is minimizing and
has constant speed,
\[
|\gamma'(t)|_{g_{\gamma(t)}}
=
d_g\!\left(\exp_{x,\tau}(u),\exp_{x,\tau}(v)\right)
\le |u|+|v|=:s.
\]
The uniform equivalence of the two metrics therefore gives
$\|\gamma'\|_{C^0([0,1])}=O(s)$.
Since $\gamma(t)
=
u+\int_0^t\gamma'(r)\,dr$,
we also have
$\|\gamma\|_{C^0([0,1])}=O(s)$.
Hence
\[
\|\gamma\|_{C^1([0,1])}=O(s).
\] 
In normal coordinates, the Christoffel symbols satisfy
$\Gamma(y)=O(|y|)$ uniformly in $(x,\tau)$
The geodesic equation gives
$|\gamma''(t)|
\le
C|\gamma(t)|\,|\gamma'(t)|^2
=
O(s^3)$,
and hence
\[
\|\gamma''\|_{C^0([0,1])}=O(s^3).
\]
We set $h:=\gamma-\lambda$.
Since $\lambda''=0$, we have $h''=\gamma''$ and $h(0)=h(1)=0$.
Using the endpoint conditions and integrating twice, one obtains
\[
\|h\|_{C^1([0,1])}
\le
C\|h''\|_{C^0([0,1])}.
\]
Consequently,
\[
\|h\|_{C^1([0,1])}
=
\|\gamma-\lambda\|_{C^1([0,1])}
=
O(s^3).
\]
Since $\gamma$ is a minimizing geodesic affinely parametrized on $[0,1]$,
it has constant speed, and its energy equals the square of its length.
Therefore,
\[
D_{x,\tau}(u,v)
=
\int_0^1
g_{\gamma(t)}\!\left(\gamma'(t),\gamma'(t)\right)\,dt.
\]
Applying \eqref{eq:normal-metric-two-point} with
$y=\gamma(t)$ and $a=b=\gamma'(t)$,
we obtain
\[
g_{\gamma(t)}\!\left(\gamma'(t),\gamma'(t)\right)
=
|\gamma'(t)|^2
-\frac13
R_{x,\tau}\!\left(
\gamma'(t),\gamma(t),\gamma'(t),\gamma(t)
\right)+O\!\left(
|\gamma(t)|^3|\gamma'(t)|^2
\right).
\]
Since $\|\gamma\|_{C^1([0,1])}=O(s)$,
the remainder term is uniformly $O(s^5)$.  Hence
\[
D_{x,\tau}(u,v)
=
\int_0^1|\gamma'(t)|^2\,dt-\frac13\int_0^1
R_{x,\tau}\!\left(
\gamma'(t),\gamma(t),\gamma'(t),\gamma(t)
\right)\,dt
+O(s^5).
\]
Since $\gamma=\lambda+h$ and $\gamma'=q+h'$,
\[
\begin{aligned}
\int_0^1|\gamma'(t)|^2\,dt
&=
\int_0^1|q+h'(t)|^2\,dt\\
&=
|q|^2
+
2\int_0^1\langle q,h'(t)\rangle\,dt
+
\int_0^1|h'(t)|^2\,dt.
\end{aligned}
\]
The middle term vanishes because
$\int_0^1\langle q,h'(t)\rangle\,dt
=
\langle q,h(1)-h(0)\rangle
=
0$.
Moreover, since $\|h\|_{C^1([0,1])}=O(s^3)$, we have
$\int_0^1|h'(t)|^2\,dt=O(s^6)$.
Therefore
\[
\int_0^1|\gamma'(t)|^2\,dt
=
|q|^2+O(s^6).
\]
By multilinearity,
\[
\begin{aligned}
R_{x,\tau}(\gamma',\gamma,\gamma',\gamma)
-R_{x,\tau}(q,\lambda,q,\lambda)
&=
R_{x,\tau}(\gamma'-q,\gamma,\gamma',\gamma)
+
R_{x,\tau}(q,\gamma-\lambda,\gamma',\gamma)\\
&\quad\qquad+
R_{x,\tau}(q,\lambda,\gamma'-q,\gamma)
+
R_{x,\tau}(q,\lambda,q,\gamma-\lambda).
\end{aligned}
\]
Since the curvature tensor is uniformly bounded,
\[
|q|=O(s),\qquad
\|\lambda\|_{C^0([0,1])}=O(s),\qquad
\|\gamma\|_{C^1([0,1])}=O(s),
\]
and
\[
\|\gamma-\lambda\|_{C^1([0,1])}
=
\|h\|_{C^1([0,1])}
=
O(s^3),
\]
each term on the right-hand side is $O(s^6)$, uniformly in
$t\in[0,1]$.  Hence
\[
R_{x,\tau}(\gamma',\gamma,\gamma',\gamma)
=
R_{x,\tau}(q,\lambda,q,\lambda)+O(s^6).
\]
Substituting these two estimates into the preceding expansion of
$D_{x,\tau}(u,v)$ gives
\[
D_{x,\tau}(u,v)
=
|q|^2
-\frac13\int_0^1
R_{x,\tau}(q,\lambda(t),q,\lambda(t))\,dt
+O(s^5).
\]
Since $\lambda(t)=u+tq$, the skew symmetries of the curvature tensor imply
\[
R_{x,\tau}(q,\lambda(t),q,\lambda(t))
=
R_{x,\tau}(q,u,q,u).
\]
Moreover, since $q=v-u$,
\[
R_{x,\tau}(q,u,q,u)
=
R_{x,\tau}(u,v,u,v).
\]
Therefore,
\[
D_{x,\tau}(u,v)
=
|u-v|^2
-\frac13R_{x,\tau}(u,v,u,v)
+O(s^5).
\]

Set
\[
E_{x,\tau}(u,v)
:=
D_{x,\tau}(u,v)-|u-v|^2
+\frac13R_{x,\tau}(u,v,u,v).
\]
Since $E_{x,\tau}(u,v)=O(s^5)$, all derivatives of
$E_{x,\tau}$ at $(0,0)$ of total order at most four vanish.
The fifth derivatives of $E_{x,\tau}$ are uniformly bounded on the
compact restricted parameter set.  Taylor's theorem with integral
remainder, applied also after one differentiation, therefore yields
uniformly on this compact subdomain
\[
|E_{x,\tau}(u,v)|\le Cs^5,
\qquad
|\mathrm D_{(u,v)}E_{x,\tau}(u,v)|\le Cs^4.
\]
For $(z,w)$ in a fixed compact set $K$, we set
$u=rz$ and $v=r(z-w)$.
Then
\[
s=r|z|+r|z-w|=O_K(r),
\qquad
|u-v|^2=r^2|w|^2.
\]
By the symmetries of the curvature tensor,
\[
\begin{aligned}
R_{x,\tau}(u,v,u,v)
&=
r^4R_{x,\tau}(z,z-w,z,z-w)\\
&=
r^4R_{x,\tau}(z,w,z,w).
\end{aligned}
\]
Hence
\[
\Phi_{r,x,\tau}(z,w)
=
|w|^2
-\frac{r^2}{3}R_{x,\tau}(z,w,z,w)
+r^{-2}E_{x,\tau}(rz,r(z-w)).
\]
Define
\[
\mathcal R_{r,x,\tau}(z,w)
:=
r^{-5}E_{x,\tau}(rz,r(z-w)).
\]
Then
$\Phi_{r,x,\tau}(z,w)
=
|w|^2
-\frac{r^2}{3}R_{x,\tau}(z,w,z,w)
+r^3\mathcal R_{r,x,\tau}(z,w)$,
and
$|\mathcal R_{r,x,\tau}(z,w)|\le C_K$.
By the chain rule,
\[
\mathrm D_w\mathcal R_{r,x,\tau}(z,w)
=
-r^{-4}
\mathrm D_vE_{x,\tau}(rz,r(z-w)).
\]
Since
$|\mathrm D_{(u,v)}E_{x,\tau}(u,v)|\le Cs^4$,
we have
\[
\left\|
\mathrm D_vE_{x,\tau}(rz,r(z-w))
\right\|
\le C_Kr^4.
\]
Therefore,
$\left\|
\mathrm D_w\mathcal R_{r,x,\tau}(z,w)
\right\|
\le C_K$.
This proves the lemma.
\end{proof}

\begin{corollary}
\label{cor:radial-chord-boundary}
Let $L\subset\mathbb R^d\times S^{d-1}$ be compact and let $\rho$ range over a fixed compact neighborhood
$[1-\varepsilon,1+\varepsilon]$ of $1$. Then, uniformly in
$(x,\tau)\in\mathcal F(M)$, $(z,\theta)\in L$, and $\rho$,
\[
\partial_\rho\Phi_{r,x,\tau}(z,\rho\theta)
=
2\rho
-\frac{2r^2}{3}\rho
R_{x,\tau}(z,\theta,z,\theta)
+O(r^3).
\]
Consequently, for all sufficiently small $r$, the equation
$\Phi_{r,x,\tau}(z,\rho\theta)=1$
has a unique solution $\rho_{r,x,\tau}(z,\theta)$ near $\rho=1$, and
\[
\rho_{r,x,\tau}(z,\theta)
=
1+\frac{r^2}{6}
R_{x,\tau}(z,\theta,z,\theta)
+O(r^3),
\]
uniformly in $(x,\tau)\in\mathcal F(M)$ and $(z,\theta)\in L$.
\end{corollary}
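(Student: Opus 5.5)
The plan is to apply Lemma~\ref{lem:chord-expansion} with $w=\rho\theta$ and differentiate in $\rho$. Set $K:=\{(z,\rho\theta):(z,\theta)\in L,\ \rho\in[1-\varepsilon,1+\varepsilon]\}$, which is compact as a continuous image of a compact set. On $K$ the lemma gives, for $0<r<r_K$,
\[
\Phi_{r,x,\tau}(z,\rho\theta)=\rho^2-\frac{r^2}{3}\rho^2R_{x,\tau}(z,\theta,z,\theta)+r^3\mathcal R_{r,x,\tau}(z,\rho\theta).
\]
Here we used that $R_{x,\tau}(z,w,z,w)$ is quadratic in $w$. Since $\mathcal R$ is $C^1$ in $w$, the chain rule gives
\[
\partial_\rho[\mathcal R_{r,x,\tau}(z,\rho\theta)]=\mathrm D_w\mathcal R_{r,x,\tau}(z,\rho\theta)\theta .
\]
Its norm is at most $C_K$ by the uniform bound in the lemma. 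Differentiating the display in $\rho$ therefore yields the derivative formula with an $O(r^3)$ error, uniform in $(x,\tau,z,\theta,\rho)$. The uniformity comes from the compactness of $\mathcal F(M)$ and the fact that the curvature tensor is uniformly bounded.

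For existence and uniqueness of the root, write $F(\rho):=\Phi_{r,x,\tau}(z,\rho\theta)-1$. Because $|R_{x,\tau}(z,\theta,z,\theta)|\le C$ on $L$, the derivative formula gives $\partial_\rho F\ge 2(1-\varepsilon)-Cr^2\ge 1$ on $[1-\varepsilon,1+\varepsilon]$ once $r$ is small. So $F$ is strictly increasing there, and any root in the interval is unique. The expansion also gives $F(1\pm\varepsilon)=(1\pm\varepsilon)^2-1+O(r^2)$, which has the sign of $\pm1$ for small $r$. The intermediate value theorem then produces the root $\rho_{r,x,\tau}(z,\theta)$. This step only needs continuity in $\rho$, which the lemma supplies.

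For the asymptotic formula, put $\rho_*:=1+\frac{r^2}{6}R_{x,\tau}(z,\theta,z,\theta)$. Expanding, we get
\[
F(\rho_*)=\frac{r^2}{3}R-\frac{r^2}{3}R+O(r^4)+O(r^3)=O(r^3),
\]
where $R=R_{x,\tau}(z,\theta,z,\theta)$. By the mean value theorem and $\partial_\rho F\ge1$ between $\rho_*$ and the root,
\[
|\rho_{r,x,\tau}-\rho_*|\le|F(\rho_*)|=O(r^3).
\]
This is uniform because every constant came from $C_K$ and the curvature bound.

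The only delicate point is that the remainder must be differentiable in $w$ with a uniform derivative bound, so that the $\rho$-derivative error is really $O(r^3)$ and not merely the value error. Lemma~\ref{lem:chord-expansion} was stated precisely to provide this, so I expect no genuine obstacle. One must just make sure $r$ is taken below both $r_K$ and the smallness thresholds used above.
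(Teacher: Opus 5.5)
Your proposal is correct and follows essentially the paper's route: you apply Lemma~\ref{lem:chord-expansion} on the compact set of points $(z,\rho\theta)$, differentiate using the uniform bound on $\mathrm D_w\mathcal R_{r,x,\tau}$, and get existence and uniqueness of the root from $\partial_\rho F\ge c>0$ together with the intermediate value theorem. (Taking $c=1$ needs $\varepsilon<1/2$; otherwise use $c=1-\varepsilon$. This harmless adjustment is also implicit in the paper's argument.) The only difference is the last step: you bound $|\rho_{r,x,\tau}-\rho_*|\le c^{-1}|F(\rho_*)|=O(r^3)$ by the mean value theorem at the explicit candidate $\rho_*$, whereas the paper first traps the root in $(1-Br^2,1+Br^2)$ and then substitutes $\rho=1+\delta$ and solves for $\delta$ to order $r^3$; the two arguments are equivalent.
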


\begin{proof}
Apply Lemma~\ref{lem:chord-expansion} to the compact set
\[
K_{L,\varepsilon}
:=
\left\{
(z,\rho\theta):
(z,\theta)\in L,\ 
\rho\in[1-\varepsilon,1+\varepsilon]
\right\}.
\]
Since $R_{x,\tau}(z,\rho\theta,z,\rho\theta)
=
\rho^2R_{x,\tau}(z,\theta,z,\theta)$,
we have
\[
\Phi_{r,x,\tau}(z,\rho\theta)
=
\rho^2
-\frac{r^2}{3}\rho^2
R_{x,\tau}(z,\theta,z,\theta)
+r^3\mathcal R_{r,x,\tau}(z,\rho\theta).
\]
Differentiating with respect to $\rho$ gives
\[
\partial_\rho\Phi_{r,x,\tau}(z,\rho\theta)
=
2\rho
-\frac{2r^2}{3}\rho
R_{x,\tau}(z,\theta,z,\theta)
+r^3
\mathrm D_w\mathcal R_{r,x,\tau}(z,\rho\theta)[\theta].
\]
Since $|\theta|=1$ and $\mathrm D_w\mathcal R_{r,x,\tau}$ is uniformly
bounded, the last term is $O(r^3)$.  Thus
\[
\partial_\rho\Phi_{r,x,\tau}(z,\rho\theta)
=
2\rho
-\frac{2r^2}{3}\rho
R_{x,\tau}(z,\theta,z,\theta)
+O(r^3).
\]
Finally, set
\[
F_{r,x,\tau,z,\theta}(\rho)
:=
\Phi_{r,x,\tau}(z,\rho\theta)-1.
\]
On $[1-\varepsilon,1+\varepsilon]$, we have
$\partial_\rho F_{r,x,\tau,z,\theta}(\rho)=2\rho+O(r^2)$.
Thus,
after decreasing $r$ if necessary,
\[
\partial_\rho F_{r,x,\tau,z,\theta}(\rho)\ge1,
\]
uniformly in the remaining parameters.  Hence
$F_{r,x,\tau,z,\theta}$ is strictly increasing.
Set
\[
A_{x,\tau}(z,\theta)
:=
R_{x,\tau}(z,\theta,z,\theta).
\]
Since $\mathcal F(M)\times L$ is compact, there exists $A_L>0$ such that
\[
|A_{x,\tau}(z,\theta)|\le A_L.
\]
Choose $B>0$ sufficiently large.
Since $F_{r,x,\tau,z,\theta}(\rho)
=
\rho^2-1
-\frac{r^2}{3}\rho^2A_{x,\tau}(z,\theta)
+O(r^3)$,
we obtain, for sufficiently small $r$,
\[
F_{r,x,\tau,z,\theta}(1-Br^2)<0,
\qquad
F_{r,x,\tau,z,\theta}(1+Br^2)>0.
\]
The intermediate value theorem therefore gives a root
\[
\rho_{r,x,\tau}(z,\theta)
\in(1-Br^2,1+Br^2),
\]
and strict monotonicity shows that this root is unique.  Hence
\[
\left|
\rho_{r,x,\tau}(z,\theta)-1
\right|
\le Br^2
\]
uniformly in $(x,\tau)\in\mathcal F(M)$ and $(z,\theta)\in L$.
In particular,
$\rho_{r,x,\tau}(z,\theta)
=
1+O(r^2)$,
where the $O(r^2)$ term is uniform in these parameters.
We write
\[
\rho_{r,x,\tau}(z,\theta)=1+\delta.
\]
Then $\delta=O(r^2)$ uniformly in the same parameters, and substitution into
$\Phi_{r,x,\tau}(z,\rho\theta)=1$
gives
\[
2\delta
-\frac{r^2}{3}R_{x,\tau}(z,\theta,z,\theta)
+O\!\left(\delta^2+r^2|\delta|+r^3\right)
=0.
\]
Since
$\delta^2+r^2|\delta|=O(r^4)$,
we conclude that
\[
2\delta
-\frac{r^2}{3}R_{x,\tau}(z,\theta,z,\theta)
+O(r^3)
=0.
\]
Therefore,
\[
\delta
=
\frac{r^2}{6}R_{x,\tau}(z,\theta,z,\theta)
+O(r^3),
\]
and hence
$\rho_{r,x,\tau}(z,\theta)
=
1+\frac{r^2}{6}
R_{x,\tau}(z,\theta,z,\theta)
+O(r^3)$.
\end{proof}

For the remainder of this subsection, for each $x$ choose any
$\tau\in\mathcal F_x$ and suppress $\tau$ from the notation.  The resulting
statements are independent of this choice and remain uniform over
$(x,\tau)\in\mathcal F(M)$.

The Ricci and scalar traces in the next lemma use the convention fixed in
Section~\ref{sec:setting}:
\[
\Ric_R(w,w)=\sum_i R(e_i,w,e_i,w),\qquad
\Scal(R)=\sum_i\Ric_R(e_i,e_i).
\]

\begin{lemma}\label{lem:rotational-contraction}
Let $\psi:\R^d\times S^{d-1}\to\R$ be bounded, measurable, compactly supported
in the first variable, and invariant under simultaneous rotations:
\[
\psi(Oz,Ow)=\psi(z,w),\qquad O\in O(d).
\]
Then, for $d\ge2$ and every algebraic curvature tensor $R$ on $\R^d$,
\[
\int_{S^{d-1}}\int_{\R^d}\psi(z,w)R(z,w,z,w)\,dz\,d\sigma(w)
=\frac{\Scal(R)}{d(d-1)}
\int_{S^{d-1}}\int_{\R^d}\psi(z,w)
\bigl(|z|^2-(z\cdot w)^2\bigr)\,dz\,d\sigma(w).
\]
\end{lemma}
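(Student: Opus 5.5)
The idea is to average over the orthogonal group and reduce the claim to a computation about $O(d)$-invariant quadrilinear forms. Define the linear functional
\[
\Lambda(R):=\int_{S^{d-1}}\int_{\R^d}\psi(z,w)R(z,w,z,w)\,dz\,d\sigma(w)
\]
on the space $\mathcal C$ of algebraic curvature tensors on $\R^d$. The integral converges because $\psi$ is bounded and compactly supported in $z$, while $w$ ranges over the compact sphere.

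First I would show that $\Lambda$ is $O(d)$-invariant. For $O\in O(d)$, set $(O^*R)(a,b,c,e):=R(Oa,Ob,Oc,Oe)$. Substituting $(z,w)\mapsto(Oz,Ow)$, which preserves both $dz$ and $d\sigma$, and using $\psi(Oz,Ow)=\psi(z,w)$, gives $\Lambda(O^*R)=\Lambda(R)$. Averaging over Haar measure then yields $\Lambda(R)=\Lambda(\bar R)$, where
\[
\bar R:=\int_{O(d)}O^*R\,dO.
\]
The tensor $\bar R$ is an $O(d)$-invariant algebraic curvature tensor, because the curvature symmetries and the Bianchi identity are linear and preserved by pullback.

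Next I would identify $\bar R$ explicitly. The claim is that every $O(d)$-invariant algebraic curvature tensor is a multiple of the constant-curvature tensor
\[
R_1(a,b,c,e):=\langle a,c\rangle\langle b,e\rangle-\langle a,e\rangle\langle b,c\rangle,
\]
so that $R_1(z,w,z,w)=|z|^2|w|^2-(z\cdot w)^2$. One route is classical invariant theory: every $O(d)$-invariant 4-tensor is a linear combination of the three pairings of the slots by $\langle\cdot,\cdot\rangle$, and the antisymmetry in the first pair together with the Bianchi identity forces the combination $R_1$. A more elementary route avoids the invariant-theory citation. The sectional curvature $K(\Pi)=\bar R(a,b,a,b)$ for an orthonormal pair $a,b$ depends only on the plane $\Pi$. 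Since $O(d)$ acts transitively on $2$-planes, $K$ is constant. An algebraic curvature tensor is determined by its sectional curvatures, hence $\bar R=cR_1$. The main obstacle is this step, specifically making sure it holds in dimension $d=2$ as well as $d\ge3$. For $d=2$ there is only one $2$-plane, and $\mathcal C$ is one-dimensional, spanned by $R_1$, so the conclusion holds trivially.

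Finally I would compute the constant $c$ by taking traces. Scalar curvature is linear and $O(d)$-invariant, so $\Scal(\bar R)=\Scal(R)$. Using the trace convention fixed before the lemma,
\[
\Scal(R_1)=\sum_{i,j}\bigl(\delta_{ii}\delta_{jj}-\delta_{ij}^2\bigr)=d^2-d=d(d-1).
\]
Hence $c=\Scal(R)/(d(d-1))$. Therefore
\[
\Lambda(R)=c\,\Lambda(R_1)=\frac{\Scal(R)}{d(d-1)}\int_{S^{d-1}}\int_{\R^d}\psi(z,w)\bigl(|z|^2-(z\cdot w)^2\bigr)\,dz\,d\sigma(w),
\]
where $|w|=1$ is used in the last equality. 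This is the identity asserted in the lemma.
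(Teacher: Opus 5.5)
Your argument is correct, but it puts the averaging on the curvature tensor, whereas the paper puts it on $\psi$.

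The paper proceeds as follows:
\begin{itemize}
\item It fixes $w$ and writes $z=sw+u$ with $u\in w^\perp$.
\item Invariance of $\psi$ under the stabilizer of $w$ shows that the second-moment tensor $\int\int\psi(sw+u,w)\,u\otimes u\,du\,ds$ equals $\frac{A}{d-1}\Pi_{w^\perp}$, with $A$ independent of $w$.
\item Contracting against $R(\cdot,w,\cdot,w)$ gives $\int_{\R^d}\psi(z,w)R(z,w,z,w)\,dz=\frac{A}{d-1}\Ric_R(w,w)$ for each $w$.
\item It then uses $\int_{S^{d-1}}\Ric_R(w,w)\,d\sigma(w)=\omega_d\Scal(R)$ on the left and $d\omega_dA$ on the right.
\end{itemize}

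You instead note that $R\mapsto\Lambda(R)$ is an $O(d)$-invariant linear functional and replace $R$ by its Haar average $\bar R$. You identify $\bar R=\frac{\Scal(R)}{d(d-1)}R_1$ from two facts: sectional curvature is constant, and an algebraic curvature tensor is determined by its sectional curvatures.

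What each route buys:
\begin{itemize}
\item Your route is shorter and shows structurally why only $\Scal(R)$ can survive. It needs no slice decomposition and no Schur-type argument on $w^\perp$.
\item Its cost is the sectional-curvature determination lemma, which genuinely uses the first Bianchi identity. For $d\ge4$, a nonzero $4$-form has vanishing ``sectional curvature.''
\item The paper's route uses only the pair antisymmetries of $R$, to get $R(sw+u,w,sw+u,w)=R(u,w,u,w)$.
\item The paper's route also yields the finer pointwise-in-$w$ Ricci identity before integrating over the sphere.
\end{itemize}

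Your separate treatment of $d=2$ is harmless but unnecessary. Transitivity on $2$-planes is trivial there, so the general argument already covers that case.
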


\begin{proof}
Fix $w\in S^{d-1}$ and write
\[
z=sw+u,\qquad s=z\cdot w,\quad u\in w^\perp.
\]
By the alternating symmetries of $R$,
\[
R(z,w,z,w)=R(u,w,u,w),
\qquad |z|^2-(z\cdot w)^2=|u|^2.
\]
For fixed $w$, simultaneous rotational invariance implies that the restriction
of $\psi(sw+u,w)$ to $w^\perp$ is radial in $u$.  
Using the Euclidean identification
$\mathbb R^d\simeq(\mathbb R^d)^*$, we regard $u\otimes u$ as the
rank-one endomorphism
\[
v\longmapsto \langle u,v\rangle u.
\]
The resulting tensor integral is invariant under every orthogonal
transformation fixing $w$, and hence it is a scalar multiple of
$\Pi_{w^\perp}$.  Taking its trace determines the scalar and gives
\[
\int_{\R}\int_{w^\perp}\psi(sw+u,w)u\otimes u\,du\,ds
=\frac{A(w)}{d-1}\,\Pi_{w^\perp},
\]
where $\Pi_{w^\perp}$ is the orthogonal projection onto $w^\perp$ and
\[
A(w):=\int_{\R}\int_{w^\perp}\psi(sw+u,w)|u|^2\,du\,ds.
\]
Simultaneous rotational invariance implies that $A(w)$ is independent of
$w$. Denote its common value by $A$.
Choose an orthonormal basis
\[
e_1=w,e_2,\ldots,e_d,
\qquad e_2,\ldots,e_d\in w^\perp.
\]
Contracting the preceding tensor identity with
\[
(v_1,v_2)\longmapsto R(v_1,w,v_2,w),
\]
we obtain
\[
\begin{aligned}
\int_{\R^d}\psi(z,w)R(z,w,z,w)\,dz
&=
\frac{A}{d-1}
\sum_{\alpha=2}^d R(e_\alpha,w,e_\alpha,w)\\
&=
\frac{A}{d-1}\Ric_R(w,w).
\end{aligned}
\]
The last equality follows from $e_1=w$ and
$R(w,w,w,w)=0$.
Integrating over $S^{d-1}$ and using
\[
\int_{S^{d-1}}\Ric_R(w,w)\,d\sigma(w)
=\frac{\sigma(S^{d-1})}{d}\Scal(R)
=\omega_d\Scal(R)
\]
gives
\[
\int_{S^{d-1}}\int_{\R^d}\psi(z,w)R(z,w,z,w)\,dz\,d\sigma(w)
=\frac{A\omega_d}{d-1}\Scal(R).
\]
On the other hand,
\[
\int_{S^{d-1}}\int_{\R^d}\psi(z,w)(|z|^2-(z\cdot w)^2)\,dz\,d\sigma(w)
=d\omega_d A.
\]
Combining the last two displays yields the stated coefficient
$1/(d(d-1))$.
\end{proof}

Let $d_{\mathrm{prod}}$ denote the distance induced by the product metric on $\mathbb R^d\times S^{d-1}$. For $p\in\mathbb R^d\times S^{d-1}$ and $A\subset\mathbb R^d\times S^{d-1}$, we write 
\[ \operatorname{dist}_{\mathrm{prod}}(p,A) := \inf_{q\in A}d_{\mathrm{prod}}(p,q). \]
Define
\[ T_1 := \{(z,\theta)\in\mathbb R^d\times S^{d-1}:|z|=1\}, \qquad T_2 := \{(z,\theta)\in\mathbb R^d\times S^{d-1}:|z-\theta|=1\}. \]
In the definition of $T_2$, both $z$ and $\theta$ are regarded
as vectors in the same Euclidean space $\mathbb R^d$.
The defining functions
$F_1(z,\theta)=|z|^2-1$ and
$F_2(z,\theta)=|z-\theta|^2-1$ have nonzero differentials on their zero
sets.  
At an intersection point, $|z|=|z-\theta|=1$ implies $z\cdot\theta=1/2$. 
The $S^{d-1}$-tangent component of $dF_2$ is $-2(z-(z\cdot\theta)\theta)$, which is nonzero there, whereas the corresponding component of $dF_1$ vanishes. 
Thus the hypersurfaces meet transversely. 

For a compact set
$K\subset\mathbb R^d\times S^{d-1}$ and a measurable set
$A\subset\mathbb R^d\times S^{d-1}$, we write
\[
\operatorname{meas}_K(A)
:=
\int_{A\cap K} dz\,d\sigma(\theta).
\]

\begin{lemma}
\label{lem:fixed-trace-tubular}
Assume $d\ge2$, and let
$K\subset\mathbb R^d\times S^{d-1}$ be compact.
Then there exist constants $C_K<\infty$ and $\eta_K>0$ such that
\[
\operatorname{meas}_K
\left\{
p\in\mathbb R^d\times S^{d-1}:
\operatorname{dist}_{\mathrm{prod}}(p,T_1\cup T_2)<\eta
\right\}
\le C_K\eta,
\qquad
0<\eta<\eta_K.
\]
\end{lemma}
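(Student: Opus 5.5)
The plan is to reduce the tubular estimate to the two hypersurfaces separately, since
\[
\{\operatorname{dist}_{\mathrm{prod}}(p,T_1\cup T_2)<\eta\}
=\{\operatorname{dist}_{\mathrm{prod}}(p,T_1)<\eta\}\cup\{\operatorname{dist}_{\mathrm{prod}}(p,T_2)<\eta\}.
\]
It therefore suffices to bound the measure of each $\eta$-neighborhood intersected with $K$ by $C_K\eta$. Transversality is not needed for this estimate; it will matter only later, in the jump formula. For each $i$ I will show that a point $p=(z,\theta)\in K$ with $\operatorname{dist}_{\mathrm{prod}}(p,T_i)<\eta$ satisfies $|F_i(p)|\le C\eta$ with an explicit constant. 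I will then bound the measure of a sublevel slab of $F_i$.

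\emph{Step 1: from distance to defining function.} Let $q\in T_i$ with $d_{\mathrm{prod}}(p,q)<\eta\le1$. Then $q$ lies in the compact $1$-neighborhood $K'$ of $K$. On $K'$ the functions $F_1,F_2$ are Lipschitz with respect to $d_{\mathrm{prod}}$; the Lipschitz constant $C_{K'}$ comes from bounding $|dF_i|$ along product geodesics. Hence
\[
|F_i(p)|=|F_i(p)-F_i(q)|\le C_{K'}\eta .
\]

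\emph{Step 2: slab measure.} For $T_1$, use Fubini in $\theta$. For fixed $\theta$, the set $\{z:\,||z|^2-1|\le C\eta\}$ is a spherical shell whose Lebesgue measure is $O(\eta)$ for $\eta$ small. Integrating over $S^{d-1}$ gives $O(\eta)$. For $T_2$, fix $\theta$ and substitute $z'=z-\theta$, which has unit Jacobian. The slab $\{z:\,||z-\theta|^2-1|\le C\eta\}$ is again a shell of measure $O(\eta)$, uniformly in $\theta$, and Fubini finishes the estimate. Choosing $\eta_K<\min\{1,1/(2C_{K'})\}$ keeps the shells away from the origin, so the shell volume bound $d\omega_d\,((1+C\eta)^{d/2}-(1-C\eta)^{d/2})\le C'\eta$ holds.

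The only mildly delicate point is Step 1. The distance is measured in the product metric, and $T_i$ is not compact because $z$ ranges over all of $\mathbb R^d$ for $T_1$. I will localize to $K'$ and use the Lipschitz bound of $F_i$ there, rather than any tubular neighborhood theorem. With this localization the proof involves no genuine obstruction; summing the two bounds gives the claim with $C_K=2C'$.
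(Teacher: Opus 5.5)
Your proof is correct, but it takes a different route from the paper.

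\textbf{The paper's route.} The paper observes that $T_1$ and $T_2$ are compact smooth embedded hypersurfaces of $\mathbb R^d\times S^{d-1}$. It then invokes the tubular neighborhood theorem to bound each $\eta$-tube by $C\eta$, and finishes with the same union bound you use.

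\textbf{Your route.} You replace the tubular neighborhood theorem by two elementary facts. First, a local Lipschitz bound for the defining functions $F_i$ places the $\eta$-tube inside the slab $\{|F_i|\le C\eta\}$. Second, a Fubini computation shows that for each fixed $\theta$ this slab is a spherical shell with radii $\sqrt{1\pm C\eta}$, centered at $0$ or at $\theta$. Its Lebesgue measure is therefore $O(\eta)$ uniformly in $\theta$.

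\textbf{What each buys.} The paper's argument is shorter and standard. It leaves implicit the step from the existence of a tubular neighborhood to an actual volume bound, which needs bounded Jacobians of the normal exponential map. Your argument is self-contained and gives explicit constants. It uses only the product structure and the explicit quadratic form of $F_i$, so it would also work for level sets that are not compact.

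\textbf{Two small inaccuracies}, neither affecting validity:
\begin{itemize}
\item $T_1=S^{d-1}\times S^{d-1}$ and $T_2\cong S^{d-1}\times S^{d-1}$ (via $(u,\theta)\mapsto(\theta+u,\theta)$) are in fact compact, as the paper states. Your remark that $z$ ranges over all of $\mathbb R^d$ on $T_1$ is wrong. Your localization to $K'$ is therefore a convenience rather than a necessity, though it is harmless.
\item The shell volume is $\omega_d\bigl((1+C\eta)^{d/2}-(1-C\eta)^{d/2}\bigr)$. Your factor $d\omega_d$ overestimates it, which is fine for an upper bound.
\end{itemize}
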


\begin{proof}
Since $T_1$ and $T_2$ are compact smooth embedded hypersurfaces,
the tubular neighborhood theorem implies that, for each $i=1,2$,
there exist constants $C_{K,i}<\infty$ and $\eta_i>0$ such that
\[
\operatorname{meas}_K
\left\{
p:
\operatorname{dist}_{\mathrm{prod}}(p,T_i)<\eta
\right\}
\le C_{K,i}\eta,
\qquad
0<\eta<\eta_i.
\]
Since
$\operatorname{dist}_{\mathrm{prod}}(p,T_1\cup T_2)
=
\min\left\{
\operatorname{dist}_{\mathrm{prod}}(p,T_1),
\operatorname{dist}_{\mathrm{prod}}(p,T_2)
\right\}$,
\[
\left\{
p:
\operatorname{dist}_{\mathrm{prod}}(p,T_1\cup T_2)<\eta
\right\}
=
\bigcup_{i=1}^2
\left\{
p:
\operatorname{dist}_{\mathrm{prod}}(p,T_i)<\eta
\right\}.
\]

Set
\[
\eta_K:=\min\{\eta_1,\eta_2\}.
\]
Then, for $0<\eta<\eta_K$, the subadditivity of
$\operatorname{meas}_K$ gives
\[
\begin{aligned}
&\operatorname{meas}_K
\left\{
p:
\operatorname{dist}_{\mathrm{prod}}(p,T_1\cup T_2)<\eta
\right\}\\
&\qquad\le
\sum_{i=1}^2
\operatorname{meas}_K
\left\{
p:
\operatorname{dist}_{\mathrm{prod}}(p,T_i)<\eta
\right\}\\
&\qquad\le
(C_{K,1}+C_{K,2})\eta.
\end{aligned}
\]
Thus the result follows with
\[
C_K:=C_{K,1}+C_{K,2}.
\]
\end{proof}

For $(x,\tau)\in\mathcal F(M)$, $r>0$, and
$z,w\in\mathbb R^d$, we put 
\[
c_{r,x,\tau}^{\mathrm{loc}}(z,w)
:=
\ind_{\{\Phi_{r,x,\tau}(z,w)\le1\}},
\qquad
c(w):=\ind_{\{|w|\le1\}}.
\]

\begin{lemma}
\label{lem:moving-shell}
Let $\chi\in C_c^\infty(\R^d\times\R^d)$ be fixed.
There exist constants
$0<r_0\le r_\chi$ and $0<K_0<\infty$, depending only on
$\chi$ and $(M,g)$, such that, for every
$(x,\tau)\in\mathcal F(M)$ and $0<r<r_0$,
\[
c_{r,x,\tau}^{\mathrm{loc}}(z,w)\ne c(w)
\quad\Longrightarrow\quad
\bigl||w|-1\bigr|\le K_0r^2
\]
for all $(z,w)\in\operatorname{supp}\chi$.
\end{lemma}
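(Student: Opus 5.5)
The plan is to read off the statement from Lemma~\ref{lem:chord-expansion} by a direct comparison of $\Phi_{r,x,\tau}(z,w)$ with $|w|^2$ on the compact set $K:=\operatorname{supp}\chi$. First take $r_0\le r_\chi$ small enough that Lemma~\ref{lem:chord-expansion} applies with this $K$. It then gives, uniformly in $(x,\tau)\in\mathcal F(M)$ and $(z,w)\in K$,
\[
\bigl|\Phi_{r,x,\tau}(z,w)-|w|^2\bigr|
\le \frac{r^2}{3}\bigl|R_{x,\tau}(z,w,z,w)\bigr|+C_Kr^3
\le C_1r^2 ,
\]
where $C_1$ depends only on $K$ and $(M,g)$. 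The curvature term is controlled because $|R_{x,\tau}(z,w,z,w)|\le \|R\|_\infty R_K^4$ (up to constants), with $\|R\|_\infty$ finite by compactness of $\mathcal F(M)$. The remainder is absorbed using $r<r_0\le1$.

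Next, suppose $c^{\mathrm{loc}}_{r,x,\tau}(z,w)\ne c(w)$. Then $1$ lies between $\Phi_{r,x,\tau}(z,w)$ and $|w|^2$ in the closed sense: either $\Phi\le1<|w|^2$ or $|w|^2\le1<\Phi$. In both cases
\[
\bigl||w|^2-1\bigr|\le\bigl|\Phi_{r,x,\tau}(z,w)-|w|^2\bigr|\le C_1r^2 .
\]
Since $|w|\ge0$, we have $\bigl||w|-1\bigr|=\bigl||w|^2-1\bigr|/(|w|+1)\le\bigl||w|^2-1\bigr|$. Setting $K_0:=C_1$ then gives the claim.

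No step is a genuine obstacle. The only care needed is bookkeeping: the constants $r_0$ and $K_0$ must depend only on $\chi$ and $(M,g)$, and both the curvature bound and the remainder bound must be uniform over the compact frame bundle. Both uniformities are already built into Lemma~\ref{lem:chord-expansion}. The derivative bound on $\mathcal R_{r,x,\tau}$ is not needed here, since only the zeroth-order estimate enters.
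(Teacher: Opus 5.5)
Your proof is correct and follows the paper's argument: both apply Lemma~\ref{lem:chord-expansion} on $\operatorname{supp}\chi$ to get $|\Phi_{r,x,\tau}(z,w)-|w|^2|\le Cr^2$ uniformly, and both finish with $\bigl||w|-1\bigr|\le\bigl||w|^2-1\bigr|$. The paper handles the middle step with polar coordinates $w=\rho\theta$ and a sign comparison when $|\rho^2-1|>2Cr^2$, which gives $K_0=2C$ and needs a separate remark for $w=0$; your observation that $1$ must lie between $\Phi_{r,x,\tau}(z,w)$ and $|w|^2$ reaches the same bound a little more cleanly, with $K_0=C_1$ and no special case.
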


\begin{proof}
The two indicators agree when $w=0$.  For $w\ne0$, write
$w=\rho\theta$ with $\theta\in S^{d-1}$.  By
Lemma~\ref{lem:chord-expansion}, uniformly in $(x,\tau)$ and on
$\operatorname{supp}\chi$,
\[
\Phi_{r,x,\tau}(z,\rho\theta)
=\rho^2-\frac{r^2}{3}\rho^2R_{x,\tau}(z,\theta,z,\theta)+O(r^3).
\]
On the compact support of $\chi$, after decreasing $r_0$ if necessary, the
entire perturbation satisfies
$|\Phi_{r,x,\tau}(z,\rho\theta)-\rho^2|\le Cr^2$.

Suppose that $|\rho^2-1|>2Cr^2$.
If $\rho^2-1>2Cr^2$,
then
\[
\begin{aligned}
\Phi_{r,x,\tau}(z,\rho\theta)-1
&=
(\rho^2-1)
+
\bigl(
\Phi_{r,x,\tau}(z,\rho\theta)-\rho^2
\bigr)\\
&\ge
(\rho^2-1)
-
\left|
\Phi_{r,x,\tau}(z,\rho\theta)-\rho^2
\right|\\
&>
2Cr^2-Cr^2\\
&>0.
\end{aligned}
\]
Thus both $\Phi_{r,x,\tau}(z,\rho\theta)$ and $\rho^2$ are greater
than one.
Similarly, if $\rho^2-1<-2Cr^2$,
then
\[
\begin{aligned}
\Phi_{r,x,\tau}(z,\rho\theta)-1
&\le
(\rho^2-1)
+
\left|
\Phi_{r,x,\tau}(z,\rho\theta)-\rho^2
\right|\\
&<
-2Cr^2+Cr^2\\
&<0.
\end{aligned}
\]
Thus both $\Phi_{r,x,\tau}(z,\rho\theta)$ and $\rho^2$ are less than
one.
Consequently, whenever $|\rho^2-1|>2Cr^2$,
the quantities
\[
\Phi_{r,x,\tau}(z,\rho\theta)-1
\quad\text{and}\quad
\rho^2-1
\]
have the same sign, and hence the two indicators agree.  Therefore,
if the two indicators differ, then
\[
|\rho^2-1|\le2Cr^2.
\]
Since $\rho\ge0$, we have
$|\rho^2-1|
=
|\rho-1|(\rho+1)
\ge
|\rho-1|$.
It follows that
\[
\bigl||w|-1\bigr|
=
|\rho-1|
\le
2Cr^2.
\]
Thus the assertion holds with $K_0:=2C$.
\end{proof}

Let
\[
B:=\{z\in\mathbb R^d:|z|\le1\},
\qquad
a(z):=\ind_B(z),
\qquad
b(z,w):=\ind_B(z-w).
\]
For a function $\ell:\{0,1\}^2\to\mathbb R$, put
\[
L_\ell(z,w):=\ell(a(z),b(z,w)),
\qquad
\|\ell\|_\infty
:=\max_{(\alpha,\beta)\in\{0,1\}^2}
|\ell(\alpha,\beta)|.
\]
If $\ell(0,0)=0$, then
\[
L_\ell(z,w)\ne0
\quad\Longrightarrow\quad
z\in B\cup(w+B).
\]
In particular, for $\theta\in S^{d-1}$,
\[
L_\ell(z,\theta)\ne0
\quad\Longrightarrow\quad
|z|\le2,\qquad |z-\theta|\le2.
\]
For $\chi\in C_c^\infty(\mathbb R^d\times\mathbb R^d)$, we set
\[
I_{r,x,\tau}^{\ell,\chi}
:=
\int_{\mathbb R^d}\int_{\mathbb R^d}
\chi(z,w)L_\ell(z,w)
\bigl(c_{r,x,\tau}^{\mathrm{loc}}(z,w)-c(w)\bigr)
\,dw\,dz
\]
and
\[
J_{x,\tau}^{\ell}
:=
\int_{S^{d-1}}\int_{\mathbb R^d}
L_\ell(z,\theta)
R_{x,\tau}(z,\theta,z,\theta)
\,dz\,d\sigma(\theta).
\]

\begin{lemma}
\label{lem:graph-boundary}
Assume $d\ge2$.  Let
$\ell:\{0,1\}^2\to\mathbb R$ satisfy $\ell(0,0)=0$.
Let $\chi\in C_c^\infty(\mathbb R^d\times\mathbb R^d)$ satisfy
$0\le\chi\le1$ and suppose that $\chi=1$ on a neighborhood of
\[
\mathcal S_*:=
\{(z,\theta)\in\mathbb R^d\times S^{d-1}:
 |z|\le2,\ |z-\theta|\le2\}.
\]
Then there exists $r_\chi>0$ such that, for every $L<\infty$,
\[
\sup_{\substack{(x,\tau)\in\mathcal F(M)\\
\|\ell\|_\infty\le L,\ \ell(0,0)=0}}
\frac{1}{r^2}
\left|
I_{r,x,\tau}^{\ell,\chi}
-\frac{r^2}{6}J_{x,\tau}^{\ell}
\right|
\longrightarrow0
\qquad (r\downarrow0).
\]
Moreover,
\[
\int_{\mathbb R^d}\int_{\mathbb R^d}
\chi(z,w)|L_\ell(z,w)|
\left|
c_{r,x,\tau}^{\mathrm{loc}}(z,w)-c(w)
\right|
\,dw\,dz
\le C_\chi\|\ell\|_\infty r^2
\]
for all $0<r<r_\chi$, uniformly in
$(x,\tau)\in\mathcal F(M)$.
For sufficiently small $r$, both integrals are independent of the
choice of $\chi$ among cutoffs satisfying the above condition.
\end{lemma}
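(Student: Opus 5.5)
We reduce the two-variable integral to a one-dimensional radial integral in $w$, using the radial boundary expansion of Corollary~\ref{cor:radial-chord-boundary} together with the localization of Lemma~\ref{lem:moving-shell}.

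\emph{Step 1: shell localization and the crude bound.}
By Lemma~\ref{lem:moving-shell}, the integrand of $I^{\ell,\chi}_{r,x,\tau}$ vanishes unless $\bigl||w|-1\bigr|\le K_0r^2$. On this shell $\chi=1$ wherever $L_\ell\ne0$, for small $r$. Indeed, $L_\ell(z,w)\ne0$ forces $|z|\le1$ or $|z-w|\le1$, so $(z,w/|w|)$ lies in a small neighborhood of $\mathcal S_*$. Hence $\chi$ may be dropped, which gives the independence of the cutoff. Pass to polar coordinates $w=\rho\theta$, $dw=\rho^{d-1}d\rho\,d\sigma(\theta)$. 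Since $|L_\ell|\le\|\ell\|_\infty$ and $z$ ranges over a bounded set, integrating over a shell of radial width $2K_0r^2$ yields the second (absolute) bound $C_\chi\|\ell\|_\infty r^2$.

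\emph{Step 2: identification of the sign region.}
By the strict monotonicity in Corollary~\ref{cor:radial-chord-boundary}, $c^{\mathrm{loc}}_{r,x,\tau}(z,\rho\theta)=\ind_{\{\rho\le\rho_r(z,\theta)\}}$ on $[1-\eps,1+\eps]$. Therefore
\[
c^{\mathrm{loc}}_{r,x,\tau}-c
=\ind_{\{1<\rho\le\rho_r\}}-\ind_{\{\rho_r<\rho\le1\}}.
\]
The $\rho$-integral, weighted by $L_\ell(z,\rho\theta)\rho^{d-1}$, then equals
\[
L_\ell(z,\theta)(\rho_r-1)+E(z,\theta),
\]
where $E$ collects two errors. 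The first is $O(r^4)$, from $\rho^{d-1}-1$. The second is nonzero only when $L_\ell(z,\rho\theta)\ne L_\ell(z,\theta)$ for some $\rho$ between $1$ and $\rho_r$. The latter forces $|z-\rho\theta|$ to cross $1$, hence $\operatorname{dist}_{\mathrm{prod}}((z,\theta),T_2)\le Cr^2$, and there the error is at most $2\|\ell\|_\infty|\rho_r-1|=O(r^2)$.

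\emph{Step 3: the main term.}
Substituting $\rho_r-1=\frac{r^2}{6}R_{x,\tau}(z,\theta,z,\theta)+O(r^3)$ gives $\frac{r^2}{6}J^\ell_{x,\tau}+O(r^3)$ after integrating over $(z,\theta)$ in a fixed compact set.

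\emph{Main obstacle: controlling $E$ uniformly.}
By Lemma~\ref{lem:fixed-trace-tubular}, the set where the second error is nonzero has measure $O(r^2)$, so its contribution is $O(r^2)\cdot O(r^2)=O(r^4)=o(r^2)$. All constants depend only on $\|\ell\|_\infty$, $K$, and compactness of $\mathcal F(M)$, which gives the uniform supremum. The technical point is that $L_\ell$ is discontinuous across $T_2$ (and across $T_1$ in $z$, although that does not vary with $\rho$). One must also ensure that the region $\rho<1-\eps$ never contributes, which is exactly what Lemma~\ref{lem:moving-shell} guarantees.
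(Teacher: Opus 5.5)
Your proposal is correct and follows essentially the same route as the paper: shell localization via Lemma~\ref{lem:moving-shell}, the unique radial crossing from Corollary~\ref{cor:radial-chord-boundary}, which gives the exact signed shell identity, and control of the region near the fixed trace $T_2$ (where $L_\ell$ can change along the shell) through the tubular estimate of Lemma~\ref{lem:fixed-trace-tubular}. The one difference is that you take the excluded tube to have width $\asymp r^2$, and only around $T_2$ since $a(z)$ does not depend on $\rho$; this gives an explicit $O(\|\ell\|_\infty r^3)$ remainder, whereas the paper fixes $\eta$, lets $r\downarrow0$ and then $\eta\downarrow0$, and so obtains only $o(r^2)$.
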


\begin{proof}
The condition $\ell(0,0)=0$ implies that, on $|w|=1$, the support of
$L_\ell$ lies in $\mathcal S_*$.  We regard
$\mathcal S_*$ as a compact subset of
$\mathbb R^d\times\mathbb R^d$ via
$S^{d-1}\subset\mathbb R^d$.  Since $\chi=1$ on an open neighborhood
of $\mathcal S_*$, there exists $\delta_\chi>0$ such that
$\chi=1$ on the $\delta_\chi$-neighborhood of $\mathcal S_*$ in
$\mathbb R^d\times\mathbb R^d$.

\emph{Step 1: shell localization and the cutoff margin.}
By Lemma~\ref{lem:moving-shell}, the difference
$c_{r,x,\tau}^{\mathrm{loc}}-c$ is supported in
\[
1-K_0r^2\le |w|\le1+K_0r^2.
\]
Write $w=\rho\theta$.  If $L_\ell(z,w)\ne0$, then either $|z|\le1$ or
$|z-w|\le1$.  In the first case $(z,\theta)\in\mathcal S_*$, and hence
\[
\operatorname{dist}_{\mathbb R^{2d}}
\bigl((z,\rho\theta),\mathcal S_*\bigr)
\le |\rho-1|.
\]
In the second case, we put
$z_*:=z+(1-\rho)\theta$.
Then
$|z_*-\theta|=|z-w|\le1$
and 
$|z_*|\le2$. Thus $(z_*,\theta)\in\mathcal S_*$.  Therefore
\[
\begin{aligned}
\operatorname{dist}_{\mathbb R^{2d}}
\bigl((z,\rho\theta),\mathcal S_*\bigr)
&\le
\left|(z,\rho\theta)-(z_*,\theta)\right|\\
&=
\sqrt2\,|\rho-1|.
\end{aligned}
\]
Thus, in either case,
\[
\operatorname{dist}_{\mathbb R^{2d}}
\bigl((z,\rho\theta),\mathcal S_*\bigr)
\le
\sqrt2\,K_0r^2.
\]
After decreasing $r_0$ so that
$\sqrt2\,K_0r_0^2<\delta_\chi$, the cutoff is one on every nonzero
shell contribution.

\emph{Step 2: fixed reference traces.}
Let $K$ be a fixed compact set containing all $(z,\theta)$ for which
$(z,\rho\theta)$ belongs to $\operatorname{supp}\chi$ and
$|\rho-1|\le2K_0r_0^2$.  At $\rho=1$, the reference discontinuity traces are
\[
T_1=\{(z,\theta):|z|=1\},\qquad
T_2=T_{2,1}=\{(z,\theta):|z-\theta|=1\}.
\]
For general $\rho$, the second discontinuity surface is
$T_{2,\rho}:=\{(z,\theta):|z-\rho\theta|=1\}$; thus $T_2$ is used only as a
fixed reference trace.  By Lemma~\ref{lem:fixed-trace-tubular},
\[
\operatorname{meas}_K\{\operatorname{dist}_{\mathrm{prod}}
\bigl((z,\theta),T_1\cup T_2\bigr)<\eta\}
\le C\eta,
\qquad 0<\eta<\min\{1,\eta_K\}.
\]
Define
\[
G_\eta
:=
\left\{
(z,\theta)\in K:
\operatorname{dist}_{\mathrm{prod}}
\bigl((z,\theta),T_1\cup T_2\bigr)\ge\eta
\right\}.
\]

\emph{Step 3: constancy on the good set.}

For $(z,\theta)\in G_\eta$, we have
\[
\operatorname{dist}_{\mathrm{prod}}
\bigl((z,\theta),T_2\bigr)\ge\eta.
\]
To obtain an upper bound for the same distance, observe that
\[
\operatorname{dist}_{\mathrm{prod}}
\bigl((z,\theta),T_2\bigr)
\le \bigl||z-\theta|-1\bigr|.
\]
Indeed, if $z\ne\theta$, set
\[
z'
:=
\theta+\frac{z-\theta}{|z-\theta|}.
\]
Then $(z',\theta)\in T_2$, and hence
\[
\begin{aligned}
\operatorname{dist}_{\mathrm{prod}}
\bigl((z,\theta),T_2\bigr)
&\le
\operatorname{dist}_{\mathrm{prod}}
\bigl((z,\theta),(z',\theta)\bigr)\\
&=
|z-z'|\\
&=
\bigl||z-\theta|-1\bigr|.
\end{aligned}
\]
If $z=\theta$, choose any $e\in S^{d-1}$ and put $z'=\theta+e$;
the same inequality follows.  Therefore,
\[
\bigl||z-\theta|-1\bigr|\ge\eta.
\]
For fixed $\eta>0$, after decreasing $r$ if necessary so that
$2K_0r^2\le\eta/2$, every
\[
1-2K_0r^2\le\rho\le1+2K_0r^2
\]
satisfies
\[
\begin{aligned}
\bigl||z-\rho\theta|-1\bigr|
&\ge
\bigl||z-\theta|-1\bigr|-|\rho-1|\\
&\ge
\eta-2K_0r^2
\ge \frac{\eta}{2}.
\end{aligned}
\]
Hence the indicator
\[
b(z,\rho\theta)=\ind_{\{|z-\rho\theta|\le1\}}
\]
does not change as $\rho$ varies over this interval.  Since
$a(z)=\ind_{\{|z|\le1\}}$ is independent of $\rho$, it follows that
\[
L_\ell(z,\rho\theta)=L_\ell(z,\theta)
\]
throughout the enlarged radial interval.

\emph{Step 4: the exact signed shell identity.}
Consider the enlarged radial interval
\[
1-2K_0r^2\le\rho\le1+2K_0r^2.
\]
At either endpoint, $|\rho-1|>K_0r^2$, so
Lemma~\ref{lem:moving-shell} forces agreement of the Euclidean and Riemannian
indicators.  At the lower endpoint both equal one, and at the upper endpoint
both equal zero.  Moreover,
\[
\partial_\rho\Phi_{r,x,\tau}(z,\rho\theta)=2\rho+O(r^2)\ge1
\]
throughout the interval for small $r$.  Thus there is exactly one crossing,
and Corollary~\ref{cor:radial-chord-boundary} gives
\[
\rho_{r,x,\tau}(z,\theta)
=1+\frac{r^2}{6}R_{x,\tau}(z,\theta,z,\theta)+O(r^3)
\]
uniformly.

Polar Fubini and the uniqueness of this crossing give, for every
$(z,\theta)\in G_\eta$, the exact signed identity
\begin{align}
&\int_0^\infty \chi(z,\rho\theta)L_\ell(z,\rho\theta)
\left(\ind_{\{\Phi_{r,x,\tau}(z,\rho\theta)\le1\}}
      -\ind_{\{\rho\le1\}}\right)\rho^{d-1}\,d\rho
\notag\\
&\qquad=
\int_1^{\rho_{r,x,\tau}(z,\theta)}
\chi(z,\rho\theta)L_\ell(z,\rho\theta)\rho^{d-1}\,d\rho.
\label{eq:exact-signed-shell}
\end{align}
The right-hand side is interpreted as a signed integral when
$\rho_{r,x,\tau}<1$.  By the cutoff margin from Step~1 and the constancy from
Step~3, it equals
\[
L_\ell(z,\theta)
\frac{\rho_{r,x,\tau}(z,\theta)^d-1}{d}
=\frac{r^2}{6}L_\ell(z,\theta)
R_{x,\tau}(z,\theta,z,\theta)+O(\|\ell\|_\infty r^3)
\]
uniformly on $G_\eta$.

\emph{Step 5: uniform control of the bad set and the order of limits.}
Set
\[
B_\eta:=K\setminus G_\eta.
\]
Let $\mathcal E_{r,x,\tau,\eta}$ denote the contribution from $B_\eta$
to the full shell integral, minus $r^2/6$ times the contribution from
$B_\eta$ to the limiting surface integral.
The bad-set shell contribution is bounded by
$C\|\ell\|_\infty\eta r^2$: the shell has thickness $O(r^2)$ and the bad set
has measure $O(\eta)$.  Bounded curvature gives the same bound for the omitted
part of the limiting surface integral.  Consequently, for every $L<\infty$,
\[
\limsup_{r\downarrow0}
\sup_{(x,\tau)\in\mathcal F(M)}
\sup_{\|\ell\|_\infty\le L}
 r^{-2}|\mathcal E_{r,x,\tau,\eta}|\le C L\eta.
\]
For fixed $\eta$, the good-set calculation based on
\eqref{eq:exact-signed-shell} has an integrated remainder
$O(Lr^3)$.  We first let $r\downarrow0$ and subsequently let
$\eta\downarrow0$.  This proves the asserted $o(r^2)$ formula uniformly over
bounded families of $\ell$.  
Step~1 also shows that, for sufficiently small $r$, the integral is independent of the admissible cutoff.
\end{proof}

\begin{remark}
The boundary calculation in Lemma~\ref{lem:graph-boundary} does not rely
on a general transfer principle for discontinuous weights.  Such a principle
would in general require specifying the appropriate one-sided trace at the
moving boundary.
In the present graph-induced setting, however,
\[
H^{\mathrm{loc}}_{r,x}-H_{\mathfrak h}
=
L_{\mathfrak h}(z_1,z_1-z_2)
\bigl(c^{\mathrm{loc}}_{r,x}-c\bigr),
\]
and the coefficient $L_{\mathfrak h}$ depends only on the two fixed
anchor-edge indicators, not on the moving indicator $c$.
Consequently, its discontinuities lie only on fixed trace hypersurfaces.
After removing an $\eta$-neighborhood of these hypersurfaces,
$L_{\mathfrak h}$ is locally constant across the moving shell, so the
usual shell calculation applies.  The omitted contribution is
$O(\eta)$ by Lemma~\ref{lem:fixed-trace-tubular}.
Letting first $r\downarrow0$ and then $\eta\downarrow0$ yields the desired
boundary formula without any ambiguity concerning one-sided traces.
\end{remark}

\subsection{The boundary--moment identity}

We extend $\mathfrak h$ to $[0,1]^3$ by the multilinear polynomial
\[
\overline{\mathfrak h}(s_1,s_2,s_3)
:=
\lambda_P(s_1s_2+s_1s_3+s_2s_3)
+
(\lambda_K-3\lambda_P)s_1s_2s_3.
\]
Thus $\overline{\mathfrak h}$ is the unique multilinear extension of
$\mathfrak h$, and
\[
\overline{\mathfrak h}(a,b,c)
=
\mathfrak h(a,b,c)
\qquad
\text{for }(a,b,c)\in\{0,1\}^3.
\]

We write $BV_c(\R^{2d})$ for the space of functions of bounded variation
on $\R^{2d}$ with compact support.  For $u\in BV_c(\R^{2d})$ we split the
gradient measure as $Du=(D_zu,D_wu)$, so that $D_wu$ is the $\R^d$-valued
Radon measure formed by the last $d$ components of $Du$.  Throughout,
$\mathcal H^{d-1}$ denotes the $(d-1)$-dimensional Hausdorff measure on
$\R^d$; in particular $\sigma=\mathcal H^{d-1}\lfloor S^{d-1}$.

\begin{lemma}
\label{lem:BV-jump-formula}
Assume $d\ge2$.  Use the displacement coordinates
$z=z_1$ and
$w=z_1-z_2$.
We set
$a=\ind_{\{|z|\le1\}}$,
$b=\ind_{\{|z-w|\le1\}}$,
$c=\ind_{\{|w|\le1\}}$,
and
$\widehat H_{\mathfrak h}(z,w)
:=
\overline{\mathfrak h}(a,b,c)$.
Then
\[
\widehat H_{\mathfrak h}\in BV_c(\R^{2d})
\]
holds. 
Moreover, for every
$\varphi\in C_c^1(\R^{2d};\R^d)$, we have
\begin{align}
\int_{\R^{2d}}
\varphi\cdot dD_w\widehat H_{\mathfrak h}
={}&
\int_{\R^d}\int_{\{|z-w|=1\}}
\Delta_b\mathfrak h(a,c)\,
\varphi(z,w)\cdot(z-w)
\,d\mathcal H^{d-1}(w)\,dz
\label{eq:BV-jump-measure}\\
&-
\int_{\R^d}\int_{S^{d-1}}
\Delta_c\mathfrak h(a,b)\,
\varphi(z,w)\cdot w
\,d\sigma(w)\,dz,
\notag
\end{align}
where
\[
\begin{aligned}
\Delta_b\mathfrak h(a,c)
&:=
\mathfrak h(a,1,c)-\mathfrak h(a,0,c)\\
&=
\lambda_P(a+c)+(\lambda_K-3\lambda_P)ac,
\end{aligned}
\]
and
\[
\begin{aligned}
\Delta_c\mathfrak h(a,b)
&:=
\mathfrak h(a,b,1)-\mathfrak h(a,b,0)\\
&=
\lambda_P(a+b)+(\lambda_K-3\lambda_P)ab.
\end{aligned}
\]
The two measures on the right-hand side of \eqref{eq:BV-jump-measure}
are mutually singular; consequently, each of them is supported in
$\operatorname{supp}\widehat H_{\mathfrak h}$.
\end{lemma}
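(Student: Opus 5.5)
The plan is to compute $D_w\widehat H_{\mathfrak h}$ slice by slice: freeze $z$ and apply the divergence theorem in $w$ to a piecewise constant function whose jumps lie on two spheres. Membership in $BV_c$ will be checked first, directly from the structure of $\overline{\mathfrak h}$.

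\emph{Compact support and bounded variation.} By the multilinear formula,
\[
\widehat H_{\mathfrak h}=\lambda_P(ab+ac+bc)+(\lambda_K-3\lambda_P)abc .
\]
It is therefore a finite linear combination of indicators of intersections of the three closed sets
\[
\{|z|\le1\},\qquad \{|z-w|\le1\},\qquad \{|w|\le1\}\subset\R^{2d},
\]
each intersection taken over at least two of them. Any two of these constraints bound both $|z|$ and $|w|$ (for instance $|z|\le1$ and $|z-w|\le1$ give $|w|\le2$). Hence every term, and so $\widehat H_{\mathfrak h}$, has compact support. Each of the three sets has smooth boundary, since its defining function has nonvanishing gradient on the level set, so each has locally finite perimeter. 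Finite intersections of sets of locally finite perimeter again have locally finite perimeter, and bounded ones have finite perimeter. Thus every product indicator lies in $BV_c(\R^{2d})$, and so does the linear combination $\widehat H_{\mathfrak h}$.

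\emph{Slice computation.} Let $\varphi\in C_c^1(\R^{2d};\R^d)$. By definition and Fubini,
\[
\int\varphi\cdot dD_w\widehat H_{\mathfrak h}=-\int_{\R^d}\Bigl(\int_{\R^d}\widehat H_{\mathfrak h}(z,w)\operatorname{div}_w\varphi(z,w)\,dw\Bigr)dz .
\]
Fix $z\ne0$; this excludes only a Lebesgue-null set of $z$. The value $a=a(z)$ is then a constant, and $w\mapsto\widehat H_{\mathfrak h}(z,w)=\mathfrak h(a,b(w),c(w))$ is piecewise constant. Its only jumps lie on the spheres $S_0=\{|w|=1\}$ and $S_z=\{|w-z|=1\}$. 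These spheres are distinct, so $S_0\cap S_z$ is a $(d-2)$-dimensional set and is $\mathcal H^{d-1}$-null.

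Away from that intersection the Gauss--Green formula applies on each smooth piece, using $D\ind_E=-\nu_E\,\mathcal H^{d-1}\lfloor\partial E$. The contributions are as follows.
\begin{itemize}
\item Across $S_0$ (outward normal $w$), the function jumps by $\Delta_c\mathfrak h(a,b)$, with $b=b(w)$ well defined $\mathcal H^{d-1}$-a.e.\ on $S_0$. This gives $-\int_{S^{d-1}}\Delta_c\mathfrak h(a,b)\,\varphi\cdot w\,d\sigma$.
\item Across $S_z$ (outward normal $w-z$), the function jumps by $\Delta_b\mathfrak h(a,c)$. This gives $+\int_{S_z}\Delta_b\mathfrak h(a,c)\,\varphi\cdot(z-w)\,d\mathcal H^{d-1}$.
\end{itemize}
Integrating these contributions in $z$ yields \eqref{eq:BV-jump-measure}. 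The interchange of $dz$ and $dw$ is justified because all integrands are bounded and compactly supported.

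The algebraic expressions for $\Delta_b\mathfrak h$ and $\Delta_c\mathfrak h$ follow by direct substitution into $\mathfrak h(a,b,c)=\lambda_P(ab+ac+bc)+(\lambda_K-3\lambda_P)abc$.

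\emph{Mutual singularity and supports.} The first measure is concentrated on $\Sigma_1=\{|z-w|=1\}$ and the second on $\Sigma_2=\{|w|=1\}$. For the first measure, Fubini shows that $\Sigma_1\cap\Sigma_2$ has zero mass, because each $z$-slice of the intersection is $\mathcal H^{d-1}$-null. The same argument applies to the second measure, so the two measures are mutually singular.

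Consequently their sum $D_w\widehat H_{\mathfrak h}$ has total variation equal to the sum of their total variations. Each measure is therefore absolutely continuous with respect to $|D_w\widehat H_{\mathfrak h}|\le|D\widehat H_{\mathfrak h}|$. Since $D\widehat H_{\mathfrak h}$ is supported in $\operatorname{supp}\widehat H_{\mathfrak h}$, so is each of the two measures.

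\emph{Main obstacle.} The delicate step is the slice-wise Gauss--Green argument. One must check three things.
\begin{itemize}
\item The jump coefficient on each sphere is well defined, since the other indicator is constant off a null set of the sphere.
\item The orientations are right. In particular, the normal $w-z$ on $S_z$ produces the factor $\varphi\cdot(z-w)$ with a plus sign.
\item The slice-wise pairing genuinely reproduces the $BV$ measure $D_w\widehat H_{\mathfrak h}$. I would secure this by first testing against $\varphi$ of product form, and only then integrating over $z$.
\end{itemize}
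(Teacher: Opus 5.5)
Your proposal is correct and follows essentially the same route as the paper. You fix $z\neq0$ and compute the $w$-derivative of the slice by Gauss--Green on the two ball indicators; the paper packages this step as the $BV$ product rule $D_w(bc)=c\,D_wb+b\,D_wc$, justified by $\mathcal H^{d-1}(\partial B(z,1)\cap S^{d-1})=0$. You then integrate in $z$ via Fubini and obtain mutual singularity and the support claim from the null slice intersections, as the paper does.

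Your total-variation additivity argument for the support statement is a slightly more explicit version of the paper's. The product-form testing you mention is unnecessary, since the Fubini identity you wrote already holds for general $\varphi$.
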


\begin{proof}
By the definition of $\overline{\mathfrak h}$,
$\widehat H_{\mathfrak h}
=
\lambda_P(ab+ac+bc)
+
(\lambda_K-3\lambda_P)abc$.
The functions $ab,ac,bc,abc$ are indicators of bounded convex sets in
$\R^{2d}$, hence belong to $BV_c(\R^{2d})$.  Therefore
\[
\widehat H_{\mathfrak h}\in BV_c(\R^{2d}).
\]

Fix $z\ne0$.  As functions of $w$,
\[
b=\ind_{B(z,1)},\qquad c=\ind_{B(0,1)},
\]
and Gauss--Green gives
\[
D_wb
=
(z-w)\,
\mathcal H^{d-1}\lfloor\{|z-w|=1\},
\qquad
D_wc
=
-w\,
\mathcal H^{d-1}\lfloor S^{d-1}.
\]
Moreover,
\[
\mathcal H^{d-1}
\bigl(\partial B(z,1)\cap S^{d-1}\bigr)=0.
\]
Hence the $BV$ product rule yields
\[
D_w(bc)=c\,D_wb+b\,D_wc.
\]
Since $a$ is constant in $w$,
\[
\begin{aligned}
D_w\bigl[\widehat H_{\mathfrak h}(z,\cdot)\bigr]
&=
\lambda_Pa(D_wb+D_wc)
+
\{\lambda_P+(\lambda_K-3\lambda_P)a\}
(c\,D_wb+b\,D_wc) \\
&=
\Delta_b\mathfrak h(a,c)\,D_wb
+
\Delta_c\mathfrak h(a,b)\,D_wc.
\end{aligned}
\]
This holds for every $z\ne0$.

Now let $\varphi\in C_c^1(\R^{2d};\R^d)$.  By the definition of
$D_w$ and Fubini,
\[
\int_{\R^{2d}}\varphi\cdot dD_w\widehat H_{\mathfrak h}
=
\int_{\R^d}
\left(
\int_{\R^d}
\varphi(z,\cdot)\cdot
dD_w[\widehat H_{\mathfrak h}(z,\cdot)]
\right)dz.
\]
The exceptional slice $z=0$ is $dz$-null.  Substituting the preceding
formula and the expressions for $D_wb,D_wc$ gives
\eqref{eq:BV-jump-measure}.

Finally, the two measures are concentrated on
\[
\Sigma_b=\{|z-w|=1\},
\qquad
\Sigma_c=\{|w|=1\}.
\]
For every $z\ne0$ their intersections in the $w$-slice are
$\mathcal H^{d-1}$-null; hence
\[
\Sigma_b\cap\Sigma_c
\]
is null for both measures, so the two measures are mutually singular.
Since their sum is $D_w\widehat H_{\mathfrak h}$, which vanishes on
$\R^{2d}\setminus\operatorname{supp}\widehat H_{\mathfrak h}$,
mutual singularity implies that each measure vanishes there separately.
\end{proof}
Note that, by the symmetry of $\mathfrak h$,
$\Delta_b\mathfrak h=\Delta_c\mathfrak h=\ell_{\mathfrak h}$; the two
symbols merely indicate which internal chord is moving.

\begin{proof}[Proof of Proposition~\ref{prop:universal-boundary-moment}]
Use the displacement coordinates $z=z_1$ and $w=z_1-z_2$, and let $\widehat H_{\mathfrak h}$ be as in Lemma~\ref{lem:BV-jump-formula}. Since 
$\widehat H_{\mathfrak h}(z,w) = H_{\mathfrak h}(z,z-w)$, 
the measure-preserving change of variables 
$(z_1,z_2)=(z,z-w)$ gives 
\[ M_{\mathfrak h}(d) = \int_{\mathbb R^{2d}} \widehat H_{\mathfrak h}(z,w)|z|^2\,dz\,dw. \]
Define 
\[ V(z,w) := |z|^2w-(z\cdot w)z. \]
Then
\[
\operatorname{div}_w V
=
d|z|^2-|z|^2
=
(d-1)|z|^2.
\]
Choose $\chi\in C_c^1(\mathbb R^{2d})$ such that $\chi=1$ on a
neighborhood of $\operatorname{supp}\widehat H_{\mathfrak h}$.  The
distributional product rule gives
\[
\operatorname{div}_w
\bigl(V\widehat H_{\mathfrak h}\bigr)
=
\widehat H_{\mathfrak h}\operatorname{div}_w V
+
V\cdot D_w\widehat H_{\mathfrak h}.
\]
Testing this identity against $\chi$, and using that
$\nabla_w\chi=0$ on $\operatorname{supp}\widehat H_{\mathfrak h}$, we obtain
\[
0
=
\int_{\mathbb R^{2d}}
\widehat H_{\mathfrak h}(z,w)
\operatorname{div}_w V(z,w)\,dz\,dw
+
\int_{\mathbb R^{2d}}
V(z,w)\cdot dD_w\widehat H_{\mathfrak h}(z,w).
\]
Since 
$M_{\mathfrak h}(d)
=
\int_{\mathbb R^{2d}}
\widehat H_{\mathfrak h}(z,w)|z|^2\,dz\,dw$,
the first term is
\[
(d-1)M_{\mathfrak h}(d).
\]

We next evaluate the second term using
Lemma~\ref{lem:BV-jump-formula}.  Since the measure
$D_w\widehat H_{\mathfrak h}$ is supported where $\chi=1$, we may apply
the lemma with $\varphi=\chi V$ and obtain
\begin{align*}
\int_{\mathbb R^{2d}}
V\cdot dD_w\widehat H_{\mathfrak h}
={}&
\int_{\mathbb R^d}\int_{\{|z-w|=1\}}
\Delta_b\mathfrak h(a,c)\,
V(z,w)\cdot(z-w)
\,d\mathcal H^{d-1}(w)\,dz\\
&-
\int_{\mathbb R^d}\int_{S^{d-1}}
\Delta_c\mathfrak h(a,b)\,
V(z,w)\cdot w
\,d\sigma(w)\,dz.
\end{align*}
Define
\[
I_b
:=
\int_{\mathbb R^d}\int_{\{|z-w|=1\}}
\Delta_b\mathfrak h(a,c)\,
V(z,w)\cdot(z-w)
\,d\mathcal H^{d-1}(w)\,dz.
\]
On $|w|=1$,
\[
\Delta_c\mathfrak h(a,b)
=
\mathfrak h(a,b,1)-\mathfrak h(a,b,0)
=
L_{\mathfrak h}(z,w),
\]
and
\[
V(z,w)\cdot w
=
|z|^2|w|^2-(z\cdot w)^2
=
|z|^2-(z\cdot w)^2.
\]
Hence, by the definition of $\mathcal B_{\mathfrak h}(d)$,
\[
\int_{\mathbb R^{2d}}
V\cdot dD_w\widehat H_{\mathfrak h}
=
I_b-\mathcal B_{\mathfrak h}(d).
\]
Consequently,
\begin{equation}
0
=
(d-1)M_{\mathfrak h}(d)
+
I_b
-
\mathcal B_{\mathfrak h}(d).
\label{eq:BV-divergence-ledger}
\end{equation}

It remains to compute $I_b$.  Parameterize the hypersurface
$\{|z-w|=1\}$ by
\[
u=z-w\in S^{d-1},
\qquad
w=z-u.
\]
Then
\[
I_b
=
\int_{S^{d-1}}\int_{\mathbb R^d}
\Delta_b\mathfrak h(a,c)\,
V(z,z-u)\cdot u
\,dz\,d\sigma(u).
\]
Apply the measure-preserving change of variables
\[
(z,u)\mapsto(z',w')=(-z,-u).
\]
Under this change,
\[
a=\ind_{\{|z'|\le1\}},
\qquad
c
=
\ind_{\{|z-u|\le1\}}
=
\ind_{\{|z'-w'|\le1\}}.
\]
By symmetry of $\mathfrak h$,
\[
\begin{aligned}
\Delta_b\mathfrak h(a,c)
&=
\mathfrak h(a,1,c)-\mathfrak h(a,0,c)\\
&=
\mathfrak h(a,c,1)-\mathfrak h(a,c,0)\\
&=
L_{\mathfrak h}(z',w').
\end{aligned}
\]
Moreover, since $|u|=1$,
\[
\begin{aligned}
V(z,z-u)\cdot u
&=
|z|^2(z-u)\cdot u
-
\bigl(z\cdot(z-u)\bigr)(z\cdot u)\\
&=
|z|^2(z\cdot u-1)
-
\bigl(|z|^2-z\cdot u\bigr)(z\cdot u)\\
&=
-|z|^2+(z\cdot u)^2\\
&=
-\bigl(|z'|^2-(z'\cdot w')^2\bigr).
\end{aligned}
\]
Therefore
\[I_b
=
-\mathcal B_{\mathfrak h}(d).\]
Substituting this into \eqref{eq:BV-divergence-ledger} gives
$0
=
(d-1)M_{\mathfrak h}(d)
-
2\mathcal B_{\mathfrak h}(d)$,
and hence
$\mathcal B_{\mathfrak h}(d)
=
\frac{d-1}{2}M_{\mathfrak h}(d)$.
\end{proof}

\subsection{Dependency-graph Poisson approximation and zero-count transfer}

We use Theorem~1 of Arratia--Goldstein--Gordon
\cite{ArratiaGoldsteinGordon1989}; see also
\cite{BarbourHolstJanson1992}.
The following lemma records our total-variation convention and the
simplification \(b_3=0\) furnished by a dependency graph.

\begin{lemma}
\label{lem:dep-graph}
Let \(I\) be finite, and let \((\xi_i)_{i\in I}\) be Bernoulli random
variables with dependency graph \((I,\sim)\), meaning that whenever
\(A,B\subset I\) are disjoint and no edge joins a vertex of \(A\) to a
vertex of \(B\), the families
$(\xi_i)_{i\in A}$ and $(\xi_j)_{j\in B}$
are independent.
Put
\[
p_i=\E\xi_i,
\qquad
W=\sum_{i\in I}\xi_i,
\qquad
\lambda=\E W,
\]
and let $N(i):=\{i\}\cup\{j:j\sim i\}$.
Define
\[
b_1
:=
\sum_{i\in I}\sum_{j\in N(i)}p_ip_j,
\qquad
b_2
:=
\sum_{i\in I}
\sum_{\substack{j\in N(i)\\j\ne i}}
\E[\xi_i\xi_j].
\]
With the convention
\[
d_{\mathrm{TV}}(\mu,\nu)
:=
\sup_{A\subseteq\mathbb Z_{\ge0}}
|\mu(A)-\nu(A)|,
\]
one has
\[
d_{\mathrm{TV}}
\bigl(
\mathcal L(W),
\operatorname{Po}(\lambda)
\bigr)
\le
\frac{1-e^{-\lambda}}{\lambda}(b_1+b_2)
\le
(1\wedge\lambda^{-1})(b_1+b_2),
\]
where the factor is interpreted as \(1\) when \(\lambda=0\).
\end{lemma}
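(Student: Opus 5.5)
This is the Arratia--Goldstein--Gordon bound specialized to a dependency graph, so the plan is to invoke their Theorem~1 and verify the three quantities it uses.

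\emph{Neighborhoods and $b_1,b_2$.} Take the neighborhood $B_i:=N(i)$ in their notation. Then their $b_1$ and $b_2$ coincide with ours verbatim. Their $b_2$ sums over $j\in B_i$ with $j\ne i$, and the diagonal term $\E[\xi_i^2]=p_i$ is excluded.

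\emph{The term $b_3$.} Their third term is
\[
b_3=\sum_{i}\E\bigl|\E[\xi_i-p_i\mid \sigma(\xi_j:j\notin N(i))]\bigr|.
\]
Apply the dependency-graph hypothesis with $A=\{i\}$ and $B=I\setminus N(i)$. These sets are disjoint, and by the definition of $N(i)$ no edge joins $i$ to $B$. Hence $\xi_i$ is independent of $(\xi_j)_{j\in B}$, so the conditional expectation equals $p_i$ and $b_3=0$.

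\emph{The constant.} AGG state the bound $2\bigl[(b_1+b_2)\frac{1-e^{-\lambda}}{\lambda}+b_3(1\wedge 1.4\lambda^{-1/2})\bigr]$ for $\|\mathcal L(W)-\operatorname{Po}(\lambda)\|$. Their norm is $2\sup_A|\cdot|$, which is twice our $d_{\mathrm{TV}}$. Halving the bound gives exactly the first inequality.

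The step I expect to need the most care is this factor-of-two bookkeeping. I would state explicitly which normalization is being used, and if in doubt cite Barbour--Holst--Janson Theorem~1.A or Corollary~2.C.5. Those results give the Stein--Chen bound $\frac{1-e^{-\lambda}}{\lambda}(b_1+b_2)$ directly in the $\sup_A$ convention, via the Stein solution estimate $\|\Delta g_A\|\le (1-e^{-\lambda})/\lambda$.

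\emph{The second inequality.} Use $1-e^{-\lambda}\le\min(1,\lambda)$, so $(1-e^{-\lambda})/\lambda\le 1\wedge\lambda^{-1}$.

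\emph{The case $\lambda=0$.} Then every $\xi_i=0$ almost surely, so $\mathcal L(W)=\operatorname{Po}(0)=\delta_0$. The distance is $0$, and the bound holds with the factor interpreted as $1$.
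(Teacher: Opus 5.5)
Your proposal is correct and follows the paper's route. The paper likewise invokes Theorem~1 of Arratia--Goldstein--Gordon with neighborhoods $N(i)$, uses the dependency-graph hypothesis (with $A=\{i\}$, $B=I\setminus N(i)$) to get $b_3=0$, and halves their total-variation norm to match the $\sup_A$ convention; your explicit checks of the factor of two, of $1-e^{-\lambda}\le 1\wedge\lambda$, and of the degenerate case $\lambda=0$ fill in bookkeeping that the paper leaves to the citation.
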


The next elementary consequence transfers a Poisson approximation to the
zero-count probability.  This is the form needed later for the survival
event of the degree-two threshold.

\begin{lemma}
\label{lem:zero-transfer}
Let \(W_n\) be nonnegative integer-valued random variables and put
$\lambda_n:=\E W_n$.
Suppose that \((\lambda_n)\) is uniformly bounded and that
\[
d_{\mathrm{TV}}
\bigl(
\mathcal L(W_n),
\operatorname{Po}(\lambda_n)
\bigr)
\le
C\eps_n.
\]
Then
\[
\Pp\{W_n=0\}
=
e^{-\lambda_n}
+
O(\eps_n).
\]

Moreover, suppose that
$a_n\downarrow0$,
$\eps_n=o(a_n)$,
and
$\lambda_n
=
\lambda_0+\lambda_1a_n+o(a_n)$.
Then
\[
\log\Pp\{W_n=0\}
=
-\lambda_0-\lambda_1a_n+o(a_n).
\]

The logarithmic conclusion is uniform on a parameter set \(K\) whenever
the total-variation bound, the upper bound on \(\lambda_n\), and the
expansion of \(\lambda_n\) are all uniform on \(K\).
\end{lemma}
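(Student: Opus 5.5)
The plan is to reduce everything to the distance between $\Pp\{W_n=0\}$ and the Poisson mass at zero, and then linearize the logarithm. By the definition of $d_{\mathrm{TV}}$ with $A=\{0\}$,
\[
\bigl|\Pp\{W_n=0\}-e^{-\lambda_n}\bigr|
\le d_{\mathrm{TV}}\bigl(\mathcal L(W_n),\operatorname{Po}(\lambda_n)\bigr)\le C\eps_n .
\]
This is exactly the first assertion; no further input is needed beyond the convention fixed in Lemma~\ref{lem:dep-graph}.

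For the logarithmic statement, let $\Lambda:=\sup_n\lambda_n<\infty$. Then $e^{-\lambda_n}\ge e^{-\Lambda}>0$, so the first part gives
\[
\Pp\{W_n=0\}=e^{-\lambda_n}\bigl(1+\delta_n\bigr),\qquad |\delta_n|\le Ce^{\Lambda}\eps_n .
\]
Since $\eps_n=o(a_n)$ and $a_n\downarrow0$, we have $\delta_n\to0$. For $n$ large, $|\delta_n|\le1/2$, and we use $|\log(1+\delta)|\le2|\delta|$ there. Hence
\[
\log\Pp\{W_n=0\}=-\lambda_n+O(\eps_n)=-\lambda_0-\lambda_1a_n+o(a_n)+O(\eps_n),
\]
and the last error term is $o(a_n)$. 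For the finitely many small $n$ nothing needs proving, since the statement is asymptotic.

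The only point requiring care, and the only real obstacle, is uniformity over a parameter set $K$ (in the application, $t\in[0,T]$). Here I would check that every constant above depends only on the uniform data. These are the uniform TV constant $C$, the uniform bound $\Lambda$, and the uniform $o(a_n)$ in the expansion of $\lambda_n$. It follows that the threshold beyond which $|\delta_n|\le1/2$ holds, and the resulting error bound, are uniform on $K$. The endpoint $t=0$, where $\lambda_n=0$ and $W_n=0$ almost surely, fits this scheme, since then $\delta_n=0$.
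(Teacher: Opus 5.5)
Your proposal is correct and follows the paper's proof essentially line by line. You take $A=\{0\}$ in the total-variation supremum, use the uniform lower bound $e^{-\lambda_n}\ge e^{-\Lambda}$ to write $\Pp\{W_n=0\}=e^{-\lambda_n}(1+O(\eps_n))$, and linearize the logarithm using $\eps_n\to0$; your explicit bound $|\log(1+\delta)|\le 2|\delta|$ for $|\delta|\le 1/2$ and your tracking of constants for uniformity on $K$ just make explicit what the paper leaves implicit.
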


\begin{proof}
By the definition of total variation,
\[
\left|
\Pp\{W_n=0\}
-
e^{-\lambda_n}
\right|
\le
C\eps_n.
\]
Since \((\lambda_n)\) is uniformly bounded, there is \(c>0\) such that
\[
e^{-\lambda_n}\ge c
\]
for all \(n\).  Hence
\[
\Pp\{W_n=0\}
=
e^{-\lambda_n}
\bigl(1+O(\eps_n)\bigr).
\]

For the second assertion,
$\eps_n=o(a_n)$ and $a_n\downarrow0$
imply \(\eps_n\to0\).  Therefore
\[
\begin{aligned}
\log\Pp\{W_n=0\}
&=
-\lambda_n
+
\log\bigl(1+O(\eps_n)\bigr)\\
&=
-\lambda_n+O(\eps_n)\\
&=
-\lambda_0-\lambda_1a_n+o(a_n).
\end{aligned}
\]
The same argument is uniform on \(K\) under the stated uniform
assumptions.
\end{proof}

\section{Path--triangle expansion and proof of the \texorpdfstring{$k=2$}{k=2} theorem}
\label{sec:k2}

\subsection{Euclidean moment identities}

Let
\[
L_d(s):=2\omega_{d-1}\int_{s/2}^{1}(1-u^2)^{(d-1)/2}\,du,
\qquad 0\le s\le2,
\]
the volume of the intersection of two Euclidean unit balls whose centers are
a distance $s$ apart, and define
\[
T_0(d):=d\omega_d\int_0^1s^{d-1}L_d(s)\,ds,
\qquad
T_2(d):=d\omega_d\int_0^1s^{d+1}L_d(s)\,ds.
\]
For related geometric results on intersections of geodesic balls, see
\cite{CsikosHorvath2011}.  In this notation,
\[
A_2(d)=\frac12\omega_d^2-\frac13T_0(d),
\qquad
C_2(d)=\frac1{4d}\left(\frac{4d}{d+2}\omega_d^2-2T_2(d)\right).
\]
The positive one-dimensional representation of $C_2(d)$ is proved in
Appendix~\ref{app:k2-one-dimensional}.

For later use, write
\[
M_2(d)
:=
\int_{(\R^d)^2}
H_2(z_1,z_2)|z_1|^2\,dz_1\,dz_2.
\]
Thus, by the definition of $C_2(d)$,
\[
M_2(d)=4d\,C_2(d).
\]
We now record the tensor identities used in the three-point expansion.
They hold for every admissible symmetric three-vertex weight.

\begin{lemma}
\label{lem:three-point-tensor-moments}
Let \(\mathfrak h\) be an admissible symmetric weight. Then
\[
\int_{(\R^d)^2}
H_{\mathfrak h}(z_1,z_2)\,
z_1\otimes z_1\,dz_1\,dz_2
=
\frac{M_{\mathfrak h}(d)}{d}I_d,
\]
and
\[
\int_{(\R^d)^2}
H_{\mathfrak h}(z_1,z_2)\,
z_1\otimes z_2\,dz_1\,dz_2
=
\frac{M_{\mathfrak h}(d)}{2d}I_d.
\]
Consequently, for every symmetric bilinear form \(Q\) on \(\R^d\),
\[
\int_{(\R^d)^2}
H_{\mathfrak h}(z_1,z_2)
\bigl\{Q(z_1,z_1)+Q(z_2,z_2)\bigr\}
\,dz_1\,dz_2
=
\frac{2M_{\mathfrak h}(d)}{d}\operatorname{tr}Q.
\]
\end{lemma}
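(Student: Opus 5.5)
The plan is to use the symmetries of $H_{\mathfrak h}$ to force both tensor integrals to be scalar multiples of $I_d$, and then read off the scalars from traces. First, $H_{\mathfrak h}$ is invariant under simultaneous rotations, $H_{\mathfrak h}(Oz_1,Oz_2)=H_{\mathfrak h}(z_1,z_2)$ for $O\in O(d)$, because $|Oz_1|=|z_1|$, $|Oz_2|=|z_2|$ and $|Oz_1-Oz_2|=|z_1-z_2|$. Admissibility, i.e.\ vanishing when at most one edge is present, forces at least two of $|z_1|\le1$, $|z_2|\le1$, $|z_1-z_2|\le1$ to hold on the support. Hence $|z_1|,|z_2|\le2$ there, the support is compact, and all moments are finite.

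Set $T_{11}:=\int H_{\mathfrak h}\,z_1\otimes z_1$ and $T_{12}:=\int H_{\mathfrak h}\,z_1\otimes z_2$. The change of variables $(z_1,z_2)\mapsto(Oz_1,Oz_2)$ has unit Jacobian and gives $OT_{ij}O^{\top}=T_{ij}$ for every $O\in O(d)$. Schur's lemma, or the elementary argument with reflections and coordinate permutations, then yields $T_{ij}=\frac{\operatorname{tr}T_{ij}}{d}I_d$. For $T_{11}$ the trace is $M_{\mathfrak h}(d)$ by definition, which proves the first identity.

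For $T_{12}$ the trace is $\int H_{\mathfrak h}\,z_1\cdot z_2$. I will use the polarization $z_1\cdot z_2=\tfrac12\bigl(|z_1|^2+|z_2|^2-|z_1-z_2|^2\bigr)$. The key extra symmetry is the measure-preserving map $(z_1,z_2)\mapsto(z_1-z_2,-z_2)$. It sends the triple of lengths $(|z_1|,|z_2|,|z_1-z_2|)$ to $(|z_1-z_2|,|z_2|,|z_1|)$, and since $\mathfrak h$ is symmetric, $H_{\mathfrak h}$ is preserved. Consequently $\int H_{\mathfrak h}|z_1-z_2|^2=\int H_{\mathfrak h}|z_1|^2=M_{\mathfrak h}(d)$. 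Similarly, $\int H_{\mathfrak h}|z_2|^2=M_{\mathfrak h}(d)$ via the swap $z_1\leftrightarrow z_2$. Together these give $\operatorname{tr}T_{12}=\tfrac12(M_{\mathfrak h}+M_{\mathfrak h}-M_{\mathfrak h})=\tfrac12 M_{\mathfrak h}(d)$, and therefore $T_{12}=\frac{M_{\mathfrak h}(d)}{2d}I_d$.

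For the consequence, write $Q(z,z)=\langle A, z\otimes z\rangle$ with $A$ symmetric. By the swap symmetry, the $z_2\otimes z_2$ integral equals $T_{11}$. Hence the integral is $2\langle A,T_{11}\rangle=\frac{2M_{\mathfrak h}(d)}{d}\operatorname{tr}Q$.

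The only nonroutine step is identifying the cyclic relabeling $(z_1,z_2)\mapsto(z_1-z_2,-z_2)$, which permutes the three chord lengths; I expect everything else to be bookkeeping.
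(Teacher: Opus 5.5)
Your proof is correct and follows essentially the same route as the paper: orthogonal invariance makes $T_{11}$ and $T_{12}$ scalar multiples of $I_d$; the traces come from $M_{\mathfrak h}(d)$ and from polarization, using edge-permuting measure-preserving maps to show $\int H_{\mathfrak h}|z_2|^2=\int H_{\mathfrak h}|z_1-z_2|^2=M_{\mathfrak h}(d)$; and the $Q$-identity follows by contraction. The only difference is that the paper uses $(z_1,z_2)\mapsto(-z_1,z_2-z_1)$ where you use the involution $(z_1,z_2)\mapsto(z_1-z_2,-z_2)$, which is a transposition of two chord lengths rather than a cyclic relabeling, though that does not affect the argument.
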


\begin{proof}
Throughout the proof, all unqualified integrals are over
\((\R^d)^2\) with respect to \(dz_1\,dz_2\).
The kernel \(H_{\mathfrak h}\) is invariant under simultaneous
orthogonal transformations and under relabeling of the three points
\(0,z_1,z_2\).  In particular, the measure-preserving transformations
\[
(z_1,z_2)\longmapsto(z_2,z_1),
\qquad
(z_1,z_2)\longmapsto(-z_1,z_2-z_1)
\]
permute the three edge indicators.  Hence
\[
\int H_{\mathfrak h}|z_1|^2
=
\int H_{\mathfrak h}|z_2|^2
=
\int H_{\mathfrak h}|z_1-z_2|^2
=
M_{\mathfrak h}(d).
\]

Since
$|z_1-z_2|^2
=
|z_1|^2+|z_2|^2-2z_1\cdot z_2$,
we obtain
\[
\int H_{\mathfrak h}z_1\cdot z_2
=
\frac12M_{\mathfrak h}(d).
\]

Set
\[
T_{11}
:=
\int H_{\mathfrak h}z_1\otimes z_1,
\qquad
T_{12}
:=
\int H_{\mathfrak h}z_1\otimes z_2.
\]
For every \(R\in O(d)\), simultaneous orthogonal invariance gives
\[
H_{\mathfrak h}(Ru_1,Ru_2)
=
H_{\mathfrak h}(u_1,u_2).
\]
Moreover, since \(R\) is orthogonal,
\[
|\det R|=1,
\]
so the change of variables
\[
(z_1,z_2)=(Ru_1,Ru_2)
\]
preserves Lebesgue measure.  Hence
\[
\begin{aligned}
T_{11}
&=
\int
H_{\mathfrak h}(Ru_1,Ru_2)
(Ru_1)\otimes(Ru_1)\,du_1\,du_2\\
&=
\int
H_{\mathfrak h}(u_1,u_2)
(Ru_1)\otimes(Ru_1)\,du_1\,du_2\\
&=
R\left(
\int
H_{\mathfrak h}(u_1,u_2)
u_1\otimes u_1\,du_1\,du_2
\right)R^{\mathsf T}\\
&=
RT_{11}R^{\mathsf T}.
\end{aligned}
\]
Similarly,
\[
\begin{aligned}
T_{12}
&=
\int
H_{\mathfrak h}(Ru_1,Ru_2)
(Ru_1)\otimes(Ru_2)\,du_1\,du_2\\
&=
R\left(
\int
H_{\mathfrak h}(u_1,u_2)
u_1\otimes u_2\,du_1\,du_2
\right)R^{\mathsf T}\\
&=
RT_{12}R^{\mathsf T}.
\end{aligned}
\]
Hence both \(T_{11}\) and \(T_{12}\) commute with every orthogonal
transformation.  Therefore there exist \(\alpha,\beta\in\R\) such that
\[
T_{11}=\alpha I_d,
\qquad
T_{12}=\beta I_d.
\]

Taking traces and using
\[
\operatorname{tr}(u\otimes v)=u\cdot v,
\]
we obtain
\[
d\alpha
=
\int H_{\mathfrak h}|z_1|^2
=
M_{\mathfrak h}(d),\qquad
d\beta
=
\int H_{\mathfrak h}z_1\cdot z_2
=
\frac12M_{\mathfrak h}(d).
\]
Thus
\[
T_{11}
=
\frac{M_{\mathfrak h}(d)}{d}I_d,
\qquad
T_{12}
=
\frac{M_{\mathfrak h}(d)}{2d}I_d.
\]

Finally, contracting the first identity with \(Q\) gives
\[
\int H_{\mathfrak h}Q(z_1,z_1)
=
\frac{M_{\mathfrak h}(d)}{d}\operatorname{tr}Q.
\]
By the symmetry under
\((z_1,z_2)\mapsto(z_2,z_1)\),
the same formula holds with \(z_1\) replaced by \(z_2\).  Adding the
two identities yields
\[
\int H_{\mathfrak h}
\bigl\{Q(z_1,z_1)+Q(z_2,z_2)\bigr\}
=
\frac{2M_{\mathfrak h}(d)}{d}\operatorname{tr}Q.
\]
\end{proof}

\subsection{Proof and specializations of the path--triangle expansion}
\label{subsec:three-point-master}

We first prove a stronger anchored form of
Theorem~\ref{thm:three-point-master}.  Its integrated consequence, written
in normalized form below, recovers the expansion in that theorem and provides
the uniform remainder needed later.

For $x\in M$ and $0<r<r_*$, define the normalized anchored integral
\[
\mathcal I_{\mathfrak h}(r,x)
:=r^{-2d}f(x)\int_{M^2}
\mathfrak h\!\Big(
\ind_{\{d_g(x,y)\le r\}},
\ind_{\{d_g(x,z)\le r\}},\ind_{\{d_g(y,z)\le r\}}
\Big)
f(y)f(z)\,d\mathrm{vol}_g(y)\,d\mathrm{vol}_g(z).
\]

\begin{proposition}
\label{prop:three-point-anchored}
Assume \(d\ge2\) and the standing hypotheses on \((M,g,f)\).
There exists \(r_0>0\) such that, for every \(0\le H<\infty\), there is a
nondecreasing function
\[
\omega_H:[0,r_0]\longrightarrow[0,\infty),
\qquad
\omega_H(s)\longrightarrow0
\quad\text{as }s\downarrow0,
\]
for which
\[
\begin{aligned}
\mathcal I_{\mathfrak h}(r,x)
={}&
J_{\mathfrak h}(d)f(x)^3\\
&+
\frac{M_{\mathfrak h}(d)}{d}r^2
\left\{
\frac12 f(x)\|\grad f(x)\|_g^2
+
f(x)^2\Delta_g f(x)
-
\frac14 f(x)^3\Scal_g(x)
\right\}+
\operatorname{Rem}_{\mathfrak h}(r,x),
\end{aligned}
\]
where
\[
\bigl|
\operatorname{Rem}_{\mathfrak h}(r,x)
\bigr|
\le
r^2\omega_H(r)
\]
for every \(x\in M\), every \(0<r\le r_0\), and every admissible
symmetric weight \(\mathfrak h\) satisfying
\(\|\mathfrak h\|_\infty\le H\).

Consequently, for every \(n\ge3\), uniformly over such \(\mathfrak h\),
\[
\begin{aligned}
\frac{\E U_{\mathfrak h}(r)}
{\binom{n}{3}r^{2d}}
={}&
J_{\mathfrak h}(d)\int_M f^3\,d\mathrm{vol}_g\\
&-
\frac{3M_{\mathfrak h}(d)}{2d}r^2
\left\{
\int_M f\|\grad f\|_g^2\,d\mathrm{vol}_g
+
\frac16\int_M f^3\Scal_g\,d\mathrm{vol}_g
\right\}
+
o_H(r^2),
\end{aligned}
\]
where the remainder is independent of \(n\).
\end{proposition}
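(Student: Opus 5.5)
We will work in normal coordinates at the anchor. Fix $(x,\tau)\in\mathcal F(M)$ and substitute $y=\exp_{x,\tau}(rz_1)$ and $z=\exp_{x,\tau}(rz_2)$. Then $d\mathrm{vol}_g(y)=r^d\theta_x(rz_1)\,dz_1$, where $\theta_x(v)=1-\frac16\Ric_{x,\tau}(v,v)+O(|v|^3)$. The two anchor indicators are exact, $\ind_{\{d_g(x,y)\le r\}}=\ind_{\{|z_1|\le1\}}$, because geodesics through $x$ are radial. Only the internal chord is perturbed: $\ind_{\{d_g(y,z)\le r\}}=c^{\mathrm{loc}}_{r,x,\tau}(z_1,w)$ with $w=z_1-z_2$.

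Since $\mathfrak h$ vanishes on graphs with at most one edge, the integrand vanishes unless the configuration is connected. Hence $|z_1|,|z_2|\le2$ up to an $O(r^2)$ shell, and everything lives on a fixed compact set on which Lemma~\ref{lem:chord-expansion} and its consequences apply uniformly. The integrand also depends linearly on $(\lambda_P,\lambda_K)$, which gives uniformity over $\|\mathfrak h\|_\infty\le H$. We split
\[
H^{\mathrm{loc}}_{r,x}=H_{\mathfrak h}(z_1,z_2)+L_{\mathfrak h}(z_1,w)\bigl(c^{\mathrm{loc}}_{r,x,\tau}(z_1,w)-c(w)\bigr),
\]
and write the weight as
\[
F_r(z_1,z_2):=f(\exp_{x,\tau}(rz_1))f(\exp_{x,\tau}(rz_2))\theta_x(rz_1)\theta_x(rz_2).
\]
Then $\mathcal I_{\mathfrak h}(r,x)=f(x)\int(H_{\mathfrak h}+\text{shell term})\,F_r$.

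\emph{Step 1: Euclidean-kernel part.} Taylor-expand $f\in C^4$ to second order with an $O(r^3)$ remainder, and expand the Jacobians to order $r^2$. The order-$r$ terms are odd under $(z_1,z_2)\mapsto(-z_1,-z_2)$, a symmetry of $H_{\mathfrak h}$, so they vanish. The order-$r^2$ terms are handled by Lemma~\ref{lem:three-point-tensor-moments}.
\begin{itemize}
\item The Hessian terms $\frac12f\{\nabla^2f(z_1,z_1)+\nabla^2f(z_2,z_2)\}$ give $\frac{M}{d}f^2\Delta_gf$ after the factor $f(x)$.
\item The cross term $df(z_1)\,df(z_2)$ gives $\frac{M}{2d}f\|\grad f\|^2$, via the $z_1\otimes z_2$ identity.
\item The Jacobian terms $-\frac16f^2\{\Ric(z_1,z_1)+\Ric(z_2,z_2)\}$ give $-\frac{M}{3d}f^3\Scal$.
\end{itemize}
The constant term gives $J_{\mathfrak h}f^3$.

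\emph{Step 2: moving-boundary part.} Replace $F_r$ by $F_0=f(x)^2$; the error is bounded by $O(r)$ times the absolute shell integral. That integral is $O(r^2)$ by the second assertion of Lemma~\ref{lem:graph-boundary}, so the error is $o(r^2)$. Change variables $(z_1,z_2)\mapsto(z,w)$ and insert an admissible cutoff $\chi$. Lemma~\ref{lem:graph-boundary}, with $\ell=\ell_{\mathfrak h}$, then gives $\frac{r^2}{6}J^{\ell}_{x,\tau}+o(r^2)$, uniformly.

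Lemma~\ref{lem:rotational-contraction}, applied with $\psi=L_{\mathfrak h}$, converts $J^\ell_{x,\tau}$ into $\frac{\Scal_g(x)}{d(d-1)}\mathcal B_{\mathfrak h}(d)$. Proposition~\ref{prop:universal-boundary-moment} turns this into $\frac{\Scal}{2d}M_{\mathfrak h}$. The boundary term is therefore $+\frac{M}{12d}r^2f^3\Scal$.

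Adding Steps 1 and 2 gives the Scal coefficient $-\frac{M}{3d}+\frac{M}{12d}=-\frac{M}{4d}$, which is exactly the bracket in the proposition. All remainders are bounded uniformly in $(x,\tau)$ by compactness of $\mathcal F(M)$ and the uniform constants in the cited lemmas; taking $\omega_H$ to be the supremum of the normalized error yields a nondecreasing modulus.

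\emph{Step 3: integrate over the anchor.} We have $\E U_{\mathfrak h}(r)=\binom n3 r^{2d}\int_M\mathcal I_{\mathfrak h}(r,x)\,d\mathrm{vol}_g(x)$, so we integrate the anchored expansion over $x$. Using $\int f^2\Delta_gf=-2\int f\|\grad f\|^2$, the gradient coefficient becomes $\frac12-2=-\frac32$. The scalar-curvature coefficient is $-\frac14=-\frac32\cdot\frac16$. This gives $-\frac{3M}{2d}r^2\mathcal G_3$ with an $n$-independent remainder $r^2\omega_H(r)\Vol_g(M)$.

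The main obstacle is Step 2: justifying that the discontinuous shell term can be decoupled from the smooth weight and still produce the exact $r^2/6$ coefficient uniformly in $x$ and $\mathfrak h$. The plan handles this by combining the $O(r^2)$ absolute shell bound with the $O(r)$ oscillation of $F_r$, and by invoking Lemma~\ref{lem:graph-boundary} only for the frozen weight.
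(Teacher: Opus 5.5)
Your proposal is correct and follows the paper's proof essentially step for step. It uses the same split into a Euclidean-kernel term (Taylor expansion plus Lemma~\ref{lem:three-point-tensor-moments}, giving the Jacobian contribution $-M_{\mathfrak h}/(3d)$) and a frozen-weight shell term (Lemma~\ref{lem:graph-boundary}, Lemma~\ref{lem:rotational-contraction} and Proposition~\ref{prop:universal-boundary-moment}, giving $+M_{\mathfrak h}/(12d)$), and then integrates over the anchor.

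Two small differences, neither affecting correctness:
\begin{itemize}
\item Your uniformity argument via linearity in $(\lambda_P,\lambda_K)$ replaces the paper's use of $\|\ell_{\mathfrak h}\|_\infty\le2\|\mathfrak h\|_\infty$. It works because $\|\mathfrak h\|_\infty=\max(|\lambda_P|,|\lambda_K|)$.
\item The support bound $|z_1|,|z_2|\le2$ holds exactly, with no $O(r^2)$ shell. The anchor distances $d_g(x,\exp_{x,\tau}(rz_i))=r|z_i|$ are exact in normal coordinates.
\end{itemize}
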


\begin{proof}
Fix the first labeled point as the anchor \(x\) and choose
\(\tau\in\mathcal F_x\).
A nonzero \(\mathfrak h\)-weight requires at least two of the three possible
edges, because \(\mathfrak h\) vanishes on configurations with at most one
edge.  Hence every contributing three-vertex graph is either a path or a
triangle and has graph diameter at most two.  Since every graph edge has
geodesic length at most \(r\), the triangle inequality shows that each of
the other two vertices lies in \(B_g(x,2r)\).

Choose \(r_0>0\) so small that
\[
2r_0<\min\{\iota,\rho_{\mathrm c}\}.
\]
Then, for \(0<r\le r_0\), the other two points of every contributing
configuration have unique representations
$\exp_{x,\tau}(rz_1)$ and $\exp_{x,\tau}(rz_2)$
with $|z_1|\le2$ and $|z_2|\le2$.

Set
\[
a=\ind_{\{|z_1|\le1\}},
\qquad
b=\ind_{\{|z_2|\le1\}},
\qquad
c=\ind_{\{|z_1-z_2|\le1\}},
\]
and
\[
c^{\mathrm{loc}}_{r,x,\tau}
=
\ind_{\{
d_g(\exp_{x,\tau}(rz_1),\exp_{x,\tau}(rz_2))
\le r
\}},\qquad 
H^{\mathrm{loc}}_{r,x,\tau}
=
\mathfrak h(a,b,c^{\mathrm{loc}}_{r,x,\tau}).
\]
The two anchor-edge indicators are exact, since
\[
d_g\bigl(x,\exp_{x,\tau}(rz_i)\bigr)=r|z_i|.
\]

For a Boolean variable \(c\),
\[
\mathfrak h(a,b,c)
=
\mathfrak h(a,b,0)
+
\bigl[
\mathfrak h(a,b,1)-\mathfrak h(a,b,0)
\bigr]c.
\]
Therefore
\[
\begin{aligned}
H^{\mathrm{loc}}_{r,x,\tau}
-
H_{\mathfrak h}
&=
\widetilde L_{\mathfrak h}(z_1,z_2)
\bigl(
c^{\mathrm{loc}}_{r,x,\tau}-c
\bigr)\\
&=
L_{\mathfrak h}(z_1,z_1-z_2)
\bigl(
c^{\mathrm{loc}}_{r,x,\tau}-c
\bigr).
\end{aligned}
\]

The same graph-distance argument gives
\[
H_{\mathfrak h}(z_1,z_2)\ne0
\quad\text{or}\quad
H^{\mathrm{loc}}_{r,x,\tau}(z_1,z_2)\ne0
\quad\Longrightarrow\quad
|z_1|\le2,\qquad |z_2|\le2.
\]
Moreover,
\[
\ell_{\mathfrak h}(0,0)
=
\mathfrak h(0,0,1)-\mathfrak h(0,0,0)
=
0,
\]
so \(L_{\mathfrak h}\) satisfies the hypothesis of
Lemma~\ref{lem:graph-boundary}.

Choose an \(O(d)\)-invariant cutoff
\[
\chi\in C_c^\infty(\R^d\times\R^d)
\]
that equals one on a neighborhood of
\[
\mathcal K_*
:=
\{(z,w):|z|\le2,\ |z-w|\le2\}.
\]
After decreasing \(r_0\), if necessary, assume
\[
r_0\le r_\chi.
\]
Thus multiplication by \(\chi(z_1,z_1-z_2)\), followed by extension by
zero outside the local normal-coordinate domain, does not change the
anchored integral.

Let \(j_{x,\tau}(v)\) denote the normal-coordinate volume density:
\[
d\mathrm{vol}_g(\exp_{x,\tau}(v))
=
j_{x,\tau}(v)\,dv.
\]
The change of variables
$y=\exp_{x,\tau}(rz_1)$ and $z=\exp_{x,\tau}(rz_2)$
gives
\[
\mathcal I_{\mathfrak h}(r,x)
=
\int_{(\R^d)^2}
\chi(z_1,z_1-z_2)
H^{\mathrm{loc}}_{r,x,\tau}(z_1,z_2)
F_{r,x,\tau}(z_1,z_2)
\,dz_1\,dz_2,
\]
where
\[
F_{r,x,\tau}
:=
f(x)
f(\exp_{x,\tau}(rz_1))
f(\exp_{x,\tau}(rz_2))
j_{x,\tau}(rz_1)
j_{x,\tau}(rz_2).
\]

Since \(\chi=1\) on \(\operatorname{supp}H_{\mathfrak h}\),
\[
\chi H^{\mathrm{loc}}_{r,x,\tau}
=
H_{\mathfrak h}
+
\chi
\bigl(
H^{\mathrm{loc}}_{r,x,\tau}
-
H_{\mathfrak h}
\bigr).
\]
Hence
\[
\begin{aligned}
\mathcal I_{\mathfrak h}(r,x)
={}&
\int_{(\R^d)^2}
H_{\mathfrak h}(z_1,z_2)
F_{r,x,\tau}(z_1,z_2)
\,dz_1\,dz_2\\
&+
\int_{(\R^d)^2}
\chi(z_1,z_1-z_2)
\bigl(
H^{\mathrm{loc}}_{r,x,\tau}(z_1,z_2)
-
H_{\mathfrak h}(z_1,z_2)
\bigr)
F_{r,x,\tau}(z_1,z_2)
\,dz_1\,dz_2.
\end{aligned}
\]
We treat these two terms separately.  The first term accounts for the
Taylor expansion of the density and the normal-coordinate volume density,
while the second is the boundary contribution coming from the
curvature-induced displacement of the internal-edge condition.

In the following expansion, all tensors are evaluated at \(x\) and pulled
back by \(\tau\), and we write \(f=f(x)\).
For \(i=1,2\), consider the radial geodesic
\[
\gamma_i(t):=\exp_{x,\tau}(t z_i).
\]
Since
\[
\gamma_i(0)=x,
\qquad
\dot\gamma_i(0)=\tau z_i,
\qquad
\nabla_{\dot\gamma_i}\dot\gamma_i=0,
\]
Taylor's formula along \(\gamma_i\) gives
\[
f(\exp_{x,\tau}(rz_i))
=
f
+
r\,df(z_i)
+
\frac{r^2}{2}\nabla^2f(z_i,z_i)
+
O(r^3).
\]
Moreover, the normal-coordinate volume density satisfies
\[
j_{x,\tau}(rz_i)
=
1-\frac{r^2}{6}\Ric(z_i,z_i)+O(r^3).
\]
Hence, uniformly on the fixed compact set involved,
\[
\begin{aligned}
F_{r,x,\tau}
={}&
f^3
+
r f^2\{df(z_1)+df(z_2)\}\\
&+
r^2\Bigg[
f\,df(z_1)df(z_2)
+
\frac{f^2}{2}
\{
\nabla^2f(z_1,z_1)
+
\nabla^2f(z_2,z_2)
\}\\
&\hspace{3em}
-
\frac{f^3}{6}
\{
\Ric(z_1,z_1)
+
\Ric(z_2,z_2)
\}
\Bigg]
+
O(r^3).
\end{aligned}
\]

For the remainder of the proof, unqualified integrals are over
\((\R^d)^2\) with respect to \(dz_1\,dz_2\), and we suppress the argument
\(d\) in \(J_{\mathfrak h}(d)\), \(M_{\mathfrak h}(d)\), and
\(\mathcal B_{\mathfrak h}(d)\).

The linear term vanishes under
\[
(z_1,z_2)\longmapsto(-z_1,-z_2).
\]
By Lemma~\ref{lem:three-point-tensor-moments},
\[
\int H_{\mathfrak h}z_1\otimes z_1
=
\frac{M_{\mathfrak h}}dI_d,
\qquad
\int H_{\mathfrak h}z_1\otimes z_2
=
\frac{M_{\mathfrak h}}{2d}I_d.
\]
By symmetry, the same first identity holds with \(z_2\) in place of
\(z_1\).  Therefore
\[
\int
H_{\mathfrak h}
df(z_1)df(z_2)
=
\frac{M_{\mathfrak h}}{2d}
\|\grad f(x)\|_g^2,
\]
\[
\int
H_{\mathfrak h}
\{
\nabla^2f(z_1,z_1)
+
\nabla^2f(z_2,z_2)
\}
=
\frac{2M_{\mathfrak h}}d
\Delta_g f(x),
\]
and
\[
\int
H_{\mathfrak h}
\{
\Ric(z_1,z_1)
+
\Ric(z_2,z_2)
\}
=
\frac{2M_{\mathfrak h}}d
\Scal_g(x).
\]
Together with
\[
\int H_{\mathfrak h}=J_{\mathfrak h},
\]
this gives
\[
\begin{aligned}
\int H_{\mathfrak h}F_{r,x,\tau}
={}&
J_{\mathfrak h}f(x)^3\\
&+
r^2\left\{
\frac{M_{\mathfrak h}}{2d}
f(x)\|\grad f(x)\|_g^2
+
\frac{M_{\mathfrak h}}d
f(x)^2\Delta_g f(x)
-
\frac{M_{\mathfrak h}}{3d}
f(x)^3\Scal_g(x)
\right\}
+
O_H(r^3).
\end{aligned}
\]

We now turn to the second term in the above decomposition.
Using
\[
H^{\mathrm{loc}}_{r,x,\tau}
-
H_{\mathfrak h}
=
L_{\mathfrak h}(z_1,z_1-z_2)
\bigl(
c^{\mathrm{loc}}_{r,x,\tau}-c
\bigr),
\]
it is equal to
\[
\begin{aligned}
&\int_{(\R^d)^2}
\chi(z_1,z_1-z_2)
L_{\mathfrak h}(z_1,z_1-z_2)
\bigl(
c^{\mathrm{loc}}_{r,x,\tau}-c
\bigr)
F_{r,x,\tau}(z_1,z_2)
\,dz_1\,dz_2.
\end{aligned}
\]
On the support of \(\chi\),
\[
F_{r,x,\tau}(z_1,z_2)
=
f(x)^3+O(r)
\]
uniformly in \(x\) and \(\tau\).  On the other hand, the absolute estimate
in Lemma~\ref{lem:graph-boundary} gives
\[
\int_{(\R^d)^2}
\chi(z_1,z_1-z_2)
|L_{\mathfrak h}(z_1,z_1-z_2)|
\bigl|
c^{\mathrm{loc}}_{r,x,\tau}-c
\bigr|
\,dz_1\,dz_2
=
O_H(r^2).
\]
Hence replacing \(F_{r,x,\tau}\) by \(f(x)^3\) in the boundary term
incurs only an \(O_H(r^3)\) error.  Thus
\[
\begin{aligned}
&\int_{(\R^d)^2}
\chi(z_1,z_1-z_2)
\bigl(
H^{\mathrm{loc}}_{r,x,\tau}(z_1,z_2)
-
H_{\mathfrak h}(z_1,z_2)
\bigr)
F_{r,x,\tau}(z_1,z_2)
\,dz_1\,dz_2\\
&\qquad=
f(x)^3
\int_{(\R^d)^2}
\chi(z_1,z_1-z_2)
L_{\mathfrak h}(z_1,z_1-z_2)
\bigl(
c^{\mathrm{loc}}_{r,x,\tau}-c
\bigr)
\,dz_1\,dz_2
+
O_H(r^3).
\end{aligned}
\]

Now make the linear change of variables
\[
z=z_1,
\qquad
w=z_1-z_2.
\]
Its Jacobian has absolute value one.  Lemma~\ref{lem:graph-boundary}
therefore gives
\[
\begin{aligned}
&\int_{(\R^d)^2}
\chi(z_1,z_1-z_2)
\bigl(
H^{\mathrm{loc}}_{r,x,\tau}(z_1,z_2)
-
H_{\mathfrak h}(z_1,z_2)
\bigr)
F_{r,x,\tau}(z_1,z_2)
\,dz_1\,dz_2\\
&\qquad=
\frac{r^2f(x)^3}{6}
\int_{S^{d-1}}\int_{\R^d}
L_{\mathfrak h}(z,w)
R_{x,\tau}(z,w,z,w)
\,dz\,d\sigma(w)
+
o_H(r^2).
\end{aligned}
\]

By Lemma~\ref{lem:rotational-contraction},
\[
\int_{S^{d-1}}\int_{\R^d}
L_{\mathfrak h}(z,w)
R_{x,\tau}(z,w,z,w)
\,dz\,d\sigma(w)
=
\frac{\mathcal B_{\mathfrak h}}{d(d-1)}
\Scal_g(x),
\]
while Proposition~\ref{prop:universal-boundary-moment} gives
\[
\mathcal B_{\mathfrak h}
=
\frac{d-1}{2}M_{\mathfrak h}.
\]
Hence the second term in the decomposition, namely the boundary
contribution, is
\[
\frac{r^2M_{\mathfrak h}}{12d}
f(x)^3\Scal_g(x)
+
o_H(r^2).
\]

Combining the two scalar-curvature contributions,
\[
-\frac{M_{\mathfrak h}}{3d}
+
\frac{M_{\mathfrak h}}{12d}
=
-\frac{M_{\mathfrak h}}{4d},
\]
we obtain
\[
\begin{aligned}
\mathcal I_{\mathfrak h}(r,x)
={}&
J_{\mathfrak h}f(x)^3\\
&+
r^2\Bigg[
\frac{M_{\mathfrak h}}{2d}
f(x)\|\grad f(x)\|_g^2
+
\frac{M_{\mathfrak h}}d
f(x)^2\Delta_g f(x)
-
\frac{M_{\mathfrak h}}{4d}
f(x)^3\Scal_g(x)
\Bigg]\\
&+
\operatorname{Rem}_{\mathfrak h}(r,x),
\end{aligned}
\]
with
\[
\operatorname{Rem}_{\mathfrak h}(r,x)=o_H(r^2)
\]
uniformly in \(x\in M\) and in admissible
\(\mathfrak h\) satisfying \(\|\mathfrak h\|_\infty\le H\).
Indeed, the Taylor remainders above are uniform on the fixed compact
coordinate set, the boundary replacement error is \(O_H(r^3)\), and
Lemma~\ref{lem:graph-boundary} is uniform for bounded
\(\ell_{\mathfrak h}\).  Since
\[
\|\ell_{\mathfrak h}\|_\infty
\le
2\|\mathfrak h\|_\infty,
\]
all of these estimates are uniform over
\(\|\mathfrak h\|_\infty\le H\).

Define
\[
\omega_H(0):=0,
\]
and, for \(0<s\le r_0\),
\[
\omega_H(s)
:=
\sup_{0<r\le s}
\sup_{x\in M}
\sup_{\substack{
\mathfrak h\ \mathrm{admissible}\\
\|\mathfrak h\|_\infty\le H
}}
\frac{
|\operatorname{Rem}_{\mathfrak h}(r,x)|
}{r^2}.
\]
Then \(\omega_H\) is nondecreasing,
\[
\omega_H(s)\longrightarrow0
\qquad\text{as }s\downarrow0,
\]
and
\[
|\operatorname{Rem}_{\mathfrak h}(r,x)|
\le
r^2\omega_H(r).
\]

Finally, exchangeability gives
\[
\E U_{\mathfrak h}(r)
=
\binom{n}{3}r^{2d}
\int_M
\mathcal I_{\mathfrak h}(r,x)
\,d\mathrm{vol}_g(x).
\]
Using
\[
\int_M f^2\Delta_g f\,d\mathrm{vol}_g
=
-2\int_M f\|\grad f\|_g^2\,d\mathrm{vol}_g,
\]
together with
\[
\frac{M_{\mathfrak h}}{2d}
-
\frac{2M_{\mathfrak h}}d
=
-\frac{3M_{\mathfrak h}}{2d},
\qquad
-\frac{M_{\mathfrak h}}{4d}
=
-\frac{3M_{\mathfrak h}}{2d}\cdot\frac16,
\]
yields
\[
\begin{aligned}
\frac{\E U_{\mathfrak h}(r)}
{\binom{n}{3}r^{2d}}
={}&
J_{\mathfrak h}
\int_M f^3\,d\mathrm{vol}_g\\
&-
\frac{3M_{\mathfrak h}}{2d}r^2
\left\{
\int_M f\|\grad f\|_g^2\,d\mathrm{vol}_g
+
\frac16
\int_M f^3\Scal_g\,d\mathrm{vol}_g
\right\}
+
o_H(r^2).
\end{aligned}
\]
Moreover,
\[
\left|
\int_M
\operatorname{Rem}_{\mathfrak h}(r,x)
\,d\mathrm{vol}_g(x)
\right|
\le
\Vol_g(M)r^2\omega_H(r),
\]
so the remainder is independent of \(n\).
\end{proof}

Theorem~\ref{thm:three-point-master} follows directly from
Proposition~\ref{prop:three-point-anchored} by taking
\(H\ge\|\mathfrak h\|_\infty\) and integrating the anchored expansion over
\(x\in M\).  The remainder is independent of \(n\), so the conclusion
continues to hold along arbitrary integer sequences \(n=n(r)\ge3\).

\begin{corollary}
\label{cor:path-triangle}
The Euclidean moments of the induced-path and triangle kernels are given in
Table~\ref{tab:path-triangle-moments}.  Applying
Theorem~\ref{thm:three-point-master} to these two kernels gives the
corresponding expansions for their expected graph counts.  Moreover,
\[
J_{P_3}(d)+J_{K_3}(d)=6A_2(d),
\qquad
M_{P_3}(d)+M_{K_3}(d)=M_2(d).
\]
\end{corollary}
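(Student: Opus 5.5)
The corollary has three parts: the explicit Euclidean moments in the table, the two additivity identities, and the graph-count expansions. I will handle them in that order.

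\textbf{Additivity identities.} These need no integration. By definition $\mathfrak h_{P_3}+\mathfrak h_{K_3}=\ind_{\{a+b+c\ge2\}}=\mathfrak h_2$. Hence, pointwise on $(\R^d)^2$,
\[
H_{P_3}+H_{K_3}=H_2 .
\]
Integrating against $1$ gives $J_{P_3}+J_{K_3}=\int H_2=6A_2(d)$. Integrating against $|z_1|^2$ gives $M_{P_3}+M_{K_3}=M_2(d)$. Both identities follow from linearity of $\mathfrak h\mapsto(J_{\mathfrak h},M_{\mathfrak h})$.

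\textbf{Explicit moments.} For the triangle, note that
\[
H_{K_3}(z_1,z_2)=\ind_{\{|z_1|\le1\}}\ind_{\{|z_2|\le1\}}\ind_{\{|z_1-z_2|\le1\}} .
\]
Fixing $z_1$ with $|z_1|=s\le1$, the $z_2$-integral is the lens volume $L_d(s)$. Polar coordinates then give
\[
J_{K_3}=T_0(d),\qquad M_{K_3}=T_2(d).
\]
For the path I would use $H_{P_3}=H_2-H_{K_3}$ and compute $H_2$ directly from $H_2=ab+ac+bc-2abc$. Each term $ab$, $ac$, $bc$ integrates to $\omega_d^2$: after a measure-preserving change of variables, each is a product of two independent ball constraints. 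This gives
\[
\int H_2=3\omega_d^2-2T_0(d),
\]
consistent with $A_2=\tfrac12\omega_d^2-\tfrac13T_0$.

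For the second moments of the three terms against $|z_1|^2$:
\begin{itemize}
\item $ab$ gives $\frac{d}{d+2}\omega_d^2$, since $\int_B|z|^2dz=\frac{d}{d+2}\omega_d$.
\item $ac$ gives the same value by the same argument.
\item $bc$, with $z_1$ constrained only through $|z_1-z_2|\le1$, gives $\int_B\int_B|u+v|^2=2\frac{d}{d+2}\omega_d^2$.
\end{itemize}
The total is $\frac{4d}{d+2}\omega_d^2-2T_2$, matching $M_2=4dC_2$. Subtracting the triangle values yields
\[
J_{P_3}=3\omega_d^2-3T_0,\qquad M_{P_3}=\frac{4d}{d+2}\omega_d^2-3T_2 .
\]

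\textbf{Graph-count expansions and the $d=2$ values.} Each of $\mathfrak h_{P_3}$ and $\mathfrak h_{K_3}$ is admissible, with $(\lambda_P,\lambda_K)=(1,0)$ and $(0,1)$ respectively. Theorem~\ref{thm:three-point-master} therefore applies verbatim and gives the expansions of $\E U_{P_3}(r)$ and $\E U_{K_3}(r)$.

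The remaining, and probably most error-prone, step is evaluating $T_0(2)$ and $T_2(2)$ in closed form, to confirm the Table and the values quoted in Corollary~\ref{cor:euler-inference}. In $d=2$,
\[
L_2(s)=2\arccos(s/2)-\tfrac{s}{2}\sqrt{4-s^2}.
\]
I would integrate $s\,L_2(s)$ and $s^3L_2(s)$ over $[0,1]$ using the substitution $s=2\cos\phi$, $\phi\in[\pi/3,\pi/2]$. The targets are:
\begin{itemize}
\item $T_0(2)=\pi(\pi-\tfrac{3\sqrt3}{4})$, so that $J_{K_3}(2)=\pi(4\pi-3\sqrt3)/4$;
\item $J_{P_3}(2)=3\pi^2-3T_0(2)=\tfrac{9\sqrt3\pi}{4}$;
\item $T_2(2)$ such that $M_{P_3}(2)=2\pi^2-3T_2(2)$ equals $\tfrac{9\sqrt3\pi}{4}$.
\end{itemize}
I expect this trigonometric bookkeeping to be the only real obstacle. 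If there is a discrepancy, I would check it against the one-dimensional representation in Appendix~\ref{app:k2-one-dimensional}.
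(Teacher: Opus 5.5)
Your proposal is correct and follows the paper's proof essentially step for step. The triangle moments come from the lens volume $L_d(s)$ in polar coordinates, and the path moments from expanding the kernel as $ab+ac+bc-3abc$, which you reach as $H_2-H_{K_3}$ (the same computation), with the $bc$ second moment handled by $(u,v)=(z_2,z_1-z_2)$; the sum identities follow from $\mathfrak h_{P_3}+\mathfrak h_{K_3}=\mathfrak h_2$ and linearity. The $d=2$ closed forms you single out as the main obstacle are not needed for this statement (the paper evaluates them only in the Euler-characteristic corollary), and your targets agree with $T_0(2)=\pi(4\pi-3\sqrt3)/4$ and $T_2(2)=\pi(8\pi-9\sqrt3)/12$.
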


\begin{table}[htbp]
\centering
\caption{Euclidean zeroth and second moments for the induced-path and
triangle kernels.}
\label{tab:path-triangle-moments}
\renewcommand{\arraystretch}{1.35}
\begin{tabular}{@{}c c c@{}}
\toprule
Kernel & $J_G(d)$ & $M_G(d)$ \\
\midrule
$P_3$
& $3\{\omega_d^2-T_0(d)\}$
& $\dfrac{4d}{d+2}\omega_d^2-3T_2(d)$ \\[1.2ex]
$K_3$
& $T_0(d)$
& $T_2(d)$ \\
\bottomrule
\end{tabular}
\end{table}

\begin{proof}
For the triangle kernel,
\[
J_{K_3}(d)
=
\int_{(\R^d)^2}abc\,dz_1\,dz_2
=
T_0(d),
\]
and
\[
M_{K_3}(d)
=
\int_{(\R^d)^2}abc|z_1|^2\,dz_1\,dz_2
=
T_2(d).
\]

For the induced-path kernel,
\[
\mathfrak h_{P_3}
=
ab(1-c)+ac(1-b)+bc(1-a)
=
ab+ac+bc-3abc.
\]
Since each of the three two-edge regions has volume \(\omega_d^2\),
\[
J_{P_3}(d)
=
3\omega_d^2-3T_0(d).
\]

For the second moment,
\[
\int_{(\R^d)^2}ab|z_1|^2\,dz_1\,dz_2
=
\int_{(\R^d)^2}ac|z_1|^2\,dz_1\,dz_2
=
\frac{d}{d+2}\omega_d^2.
\]
For the remaining term, the change of variables
$u=z_2$ and $v=z_1-z_2$
gives
\[
\begin{aligned}
\int_{(\R^d)^2}bc|z_1|^2\,dz_1\,dz_2
&=
\int_{B(0,1)^2}|u+v|^2\,du\,dv\\
&=
\frac{2d}{d+2}\omega_d^2,
\end{aligned}
\]
where the mixed term vanishes by symmetry.  Hence
\[
M_{P_3}(d)
=
\frac{4d}{d+2}\omega_d^2-3T_2(d).
\]

Finally,
\[
\mathfrak h_2
=
\mathfrak h_{P_3}+\mathfrak h_{K_3},
\]
so additivity of the Euclidean moments gives
\[
J_{P_3}(d)+J_{K_3}(d)=J_{\mathfrak h_2}(d)=6A_2(d),
\]
and
\[
M_{P_3}(d)+M_{K_3}(d)=M_{\mathfrak h_2}(d)=M_2(d).
\]
\end{proof}

\begin{lemma}
\label{lem:contrast-nondegeneracy}
For every $d\ge2$,
\[
D_d>0,
\qquad
\gamma_d>0.
\]
\end{lemma}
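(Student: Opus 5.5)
The plan is to reduce $D_d>0$ to an explicit inequality between $T_0(d)$ and $T_2(d)$, using the Euclidean moments of Table~\ref{tab:path-triangle-moments} (Corollary~\ref{cor:path-triangle}), and then to settle that inequality with a one-dimensional Chebyshev-type rearrangement argument. Once $D_d>0$ is proved, $\gamma_d=3D_d/(2d)>0$ is immediate.

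\emph{Step 1: positivity of the denominators.} I first check that $J_{P_3}(d)>0$ and $J_{K_3}(d)>0$, so that $D_d$ is well defined. Clearly $T_0(d)>0$, since the lens volume $L_d(s)$ is positive on $[0,1]$. Also $L_d(s)\le\omega_d$, with strict inequality for $s>0$, so
\[
T_0(d)<d\omega_d\cdot\omega_d\int_0^1 s^{d-1}\,ds=\omega_d^2 .
\]
Hence $J_{P_3}(d)=3\{\omega_d^2-T_0(d)\}>0$.

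\emph{Step 2: algebraic reduction.} Clearing the positive denominators, $D_d>0$ is equivalent to $M_{P_3}J_{K_3}>M_{K_3}J_{P_3}$. Substituting the table entries, this reads
\[
\Bigl(\tfrac{4d}{d+2}\omega_d^2-3T_2\Bigr)T_0>3T_2\bigl(\omega_d^2-T_0\bigr).
\]
The terms $-3T_2T_0$ on the two sides cancel. After dividing by $\omega_d^2$, the inequality becomes
\[
\frac{T_2(d)}{T_0(d)}<\frac{4d}{3(d+2)}.
\]

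\emph{Step 3: the key estimate.} The ratio $T_2/T_0$ is the mean of $s^2$ under the probability density on $[0,1]$ proportional to $s^{d-1}L_d(s)$. From the integral formula, $L_d$ is nonincreasing in $s$, while $s^2$ is increasing. Chebyshev's integral inequality, applied with the base measure $s^{d-1}\,ds$ on $[0,1]$, therefore gives
\[
\int_0^1 s^{d+1}L_d\,ds\int_0^1 s^{d-1}\,ds\le\int_0^1 s^{d+1}\,ds\int_0^1 s^{d-1}L_d\,ds .
\]
This is the same as
\[
\frac{T_2}{T_0}\le\frac{d}{d+2}.
\]
Since $\frac{d}{d+2}<\frac{4d}{3(d+2)}$, the strict inequality of Step 2 follows.

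I expect Step 3 to be the only point needing care: one has to verify that $L_d$ is monotone. This follows directly from $L_d'(s)=-\omega_{d-1}(1-s^2/4)^{(d-1)/2}\le0$. The rest is bookkeeping with the table.
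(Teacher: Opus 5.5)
Your proposal is correct and follows the paper's proof essentially step for step: the same reduction of $D_d>0$ to $\frac{4d}{d+2}T_0(d)>3T_2(d)$ via Table~\ref{tab:path-triangle-moments}, followed by the same Chebyshev inequality for the oppositely ordered pair $s^2$ and $L_d(s)$ under the weight $s^{d-1}\,ds$, which gives $T_2/T_0\le d/(d+2)$. Your explicit checks that $0<T_0(d)<\omega_d^2$ and that $L_d'\le0$ fill in details the paper states in one line, and the non-strict form of Chebyshev suffices because $d/(d+2)<4d/(3(d+2))$.
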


\begin{proof}
By Table~\ref{tab:path-triangle-moments},
\[
D_d
=\frac{\omega_d^2}{3\{\omega_d^2-T_0(d)\}T_0(d)}
\left\{\frac{4d}{d+2}T_0(d)-3T_2(d)\right\}.
\]
The denominator is positive because both the triangle and induced-path
regions have positive Euclidean measure.  Let $\mu_d$ be the probability
measure on $[0,1]$ given by
\[
\mu_d(ds):=d s^{d-1}\,ds.
\]  Since $s\mapsto s^2$ is
strictly increasing and $s\mapsto L_d(s)$ is strictly decreasing,
Chebyshev's integral inequality for oppositely ordered functions gives
\[
\frac{T_2(d)}{T_0(d)}
=\frac{\int_0^1s^2L_d(s)\,d\mu_d(s)}
       {\int_0^1L_d(s)\,d\mu_d(s)}
<\int_0^1s^2\,d\mu_d(s)
=\frac{d}{d+2}.
\]
Consequently,
\[
\frac{4d}{d+2}T_0(d)-3T_2(d)
>\frac{d}{d+2}T_0(d)>0,
\]
and the assertion follows.
\end{proof}

For $\alpha\in\binom{[n]}{3}$ and $r\ge0$, let
\[
\eta_\alpha(r):=\ind_{\{\Delta(G_n(r)[\alpha])\ge2\}},
\qquad
N_2(r):=\sum_{\alpha\in\binom{[n]}{3}}\eta_\alpha(r),
\]
where $G_n(r)[\alpha]$ is the graph induced by the vertices indexed by
$\alpha$.  Then
\[
\{S_{2,n}>r\}=\{N_2(r)=0\}.
\]

\begin{corollary}
\label{cor:N2-exp}
As $r\downarrow0$,
\[
\begin{aligned}
\E N_2(r)
=\binom{n}{3} r^{2d}\Bigg[&6A_2(d)\int_M f^3\,d\mathrm{vol}_g \\
&-6C_2(d)r^2
\left\{
\int_M f\|\grad f\|_g^2\,d\mathrm{vol}_g
+\frac16\int_M f^3\Scal_g\,d\mathrm{vol}_g
\right\}
+o(r^2)\Bigg].
\end{aligned}
\]
Equivalently,
\[
\begin{aligned}
\E N_2(r)
={}&n^3r^{2d}\Bigg[A_2(d)\int_M f^3\,d\mathrm{vol}_g \\
&\hspace{4.5em}-C_2(d)r^2
\left\{
\int_M f\|\grad f\|_g^2\,d\mathrm{vol}_g
+\frac16\int_M f^3\Scal_g\,d\mathrm{vol}_g
\right\}
+o(r^2)\Bigg]\\
&+O(n^2r^{2d}).
\end{aligned}
\]
\end{corollary}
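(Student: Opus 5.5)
The plan is to recognize $N_2(r)$ as the statistic $U_{\mathfrak h_2}(r)$ for the admissible weight $\mathfrak h_2=\ind_{\{a+b+c\ge2\}}$, and then read off both displays from Theorem~\ref{thm:three-point-master}.

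\emph{Step 1: identify the count.} For $\alpha=\{i,j,k\}$, the induced graph $G_n(r)[\alpha]$ has maximum degree at least $2$ exactly when at least two of its three edges are present. Hence
\[
\eta_\alpha(r)=\mathfrak h_2\bigl(\ind_{\{d_g(X_i,X_j)\le r\}},\ind_{\{d_g(X_i,X_k)\le r\}},\ind_{\{d_g(X_j,X_k)\le r\}}\bigr),
\]
and therefore $N_2(r)=U_{\mathfrak h_2}(r)$. The weight $\mathfrak h_2$ is admissible: it equals $\mathfrak h_{P_3}+\mathfrak h_{K_3}$, which corresponds to $\lambda_P=\lambda_K=1$ in the parametrization $\mathfrak h=\lambda_P(ab+ac+bc)+(\lambda_K-3\lambda_P)abc$. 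This agrees with $H_2=ab+ac+bc-2abc$.

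\emph{Step 2: apply Theorem~\ref{thm:three-point-master}.} By definition, $J_{\mathfrak h_2}(d)=\int H_2=6A_2(d)$ and $M_{\mathfrak h_2}(d)=M_2(d)=4d\,C_2(d)$; the same identities are recorded in Corollary~\ref{cor:path-triangle}. The curvature coefficient is therefore
\[
\frac{3M_{\mathfrak h_2}(d)}{2d}=\frac{3\cdot4d\,C_2(d)}{2d}=6C_2(d).
\]
Substituting into the theorem gives the first display, with a remainder $o(r^2)$ that does not depend on $n$.

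\emph{Step 3: the equivalent form.} Write $\binom n3=\frac{n^3}{6}+O(n^2)$. Multiplying the bracket by $n^3/6$ gives the bracket with $A_2(d)$, $C_2(d)$ and $o(r^2)$; the $o(r^2)$ is unchanged in nature after division by $6$. The $O(n^2)$ part of $\binom n3$ multiplies a bracket that is bounded uniformly for small $r$, so it contributes $O(n^2r^{2d})$. This is valid along any sequence $n=n(r)$, because the remainder in Theorem~\ref{thm:three-point-master} is $n$-independent.

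There is no real obstacle here. The only points to verify are that the degree-two event is exactly the event "at least two edges present" (a three-vertex graph with at most one edge has maximum degree at most $1$) and that the constants are matched correctly.
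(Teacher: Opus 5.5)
Your proposal is correct and matches the paper's proof: you identify $N_2(r)=U_{\mathfrak h_2}(r)$, insert $J_{\mathfrak h_2}(d)=6A_2(d)$ and $M_{\mathfrak h_2}(d)=4d\,C_2(d)$ (so that $3M_{\mathfrak h_2}(d)/(2d)=6C_2(d)$) into Theorem~\ref{thm:three-point-master}, and then expand $\binom n3=n^3/6+O(n^2)$. Your explicit check that the degree-two event is the event ``at least two of the three edges are present'' is left implicit in the paper, and it is exactly the identification the argument needs.
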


\begin{proof}
Since
$\mathfrak h_2=\mathfrak h_{P_3}+\mathfrak h_{K_3}$,
Corollary~\ref{cor:path-triangle} gives
\[
J_{\mathfrak h_2}(d)=6A_2(d),
\qquad
M_{\mathfrak h_2}(d)=M_2(d)=4dC_2(d).
\]
Applying Theorem~\ref{thm:three-point-master} to
$\mathfrak h_2(a,b,c)=\ind_{\{a+b+c\ge2\}}$
gives the first formula.  The second follows from
$\binom{n}{3}=\frac{n^3}{6}+O(n^2)$.
\end{proof}

\subsection{Poisson approximation}

Let $\mathcal I_n:=\binom{[n]}{3}$
be the family of all three-element subsets of \([n]\).
For
$\alpha=\{i,j,k\}\in\mathcal I_n$,
recall that
\[
\eta_\alpha(r)
:=
\ind_{\{\Delta(G_n(r)[\alpha])\ge2\}},
\]
where \(G_n(r)[\alpha]\) is the subgraph of \(G_n(r)\) induced by the
vertices indexed by \(\alpha\).
Thus
\[
\eta_\alpha(r)=1
\]
if and only if their induced graph is either a path \(P_3\) or
a triangle \(K_3\).  In particular,
\[
N_2(r)
=
\sum_{\alpha\in\mathcal I_n}\eta_\alpha(r).
\]

To obtain a Poisson approximation for \(N_2(r)\), we need to control the
dependence between two active-triple indicators
\(\eta_\alpha(r)\) and \(\eta_\beta(r)\).
If \(\alpha\cap\beta=\varnothing\), the two indicators depend on disjoint
sets of sample points and hence are independent.  Thus only the cases
\[
|\alpha\cap\beta|=1
\qquad\text{and}\qquad
|\alpha\cap\beta|=2
\]
require estimates.  The following lemma provides the needed overlap
bounds.

\begin{lemma}\label{lem:triple-overlap}
There exists \(C<\infty\) such that, for all sufficiently small \(r\) and
all \(\alpha,\beta\in\mathcal I_n\),
\[
\Pp(\eta_\alpha(r)=1)\le Cr^{2d},
\]
\[
\Pp(\eta_\alpha(r)\eta_\beta(r)=1)\le Cr^{4d}
\qquad (|\alpha\cap\beta|=1),
\]
and
\[
\Pp(\eta_\alpha(r)\eta_\beta(r)=1)\le Cr^{3d}
\qquad (|\alpha\cap\beta|=2).
\]
\end{lemma}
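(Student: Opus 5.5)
The plan is a direct volume argument. The only inputs are that $f$ is bounded above by $\|f\|_\infty<\infty$ and that geodesic balls of small radius have volume comparable to Euclidean ones. By compactness there are constants $r_1>0$ and $c_V<\infty$ with $\Vol_g(B_g(x,s))\le c_Vs^d$ for all $x\in M$ and $0<s\le r_1$; this follows from the bounded normal-coordinate volume density $j_{x,\tau}$. The key structural observation is the one already used in the proof of Proposition~\ref{prop:three-point-anchored}: if $\eta_\alpha(r)=1$, the induced graph on $\alpha$ is a $P_3$ or $K_3$. It therefore has a vertex of degree two, the \emph{center}, joined to both other vertices, and in particular all three points lie within $2r$ of each other.

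For the single bound, write $\alpha=\{i,j,k\}$. Then $\{\eta_\alpha=1\}$ is contained in the union over the three choices of center of the events that the other two points lie in $B_g(X_{\rm center},r)$. Conditioning on the center gives
\[
\Pp(\eta_\alpha(r)=1)\le 3\bigl(\|f\|_\infty c_Vr^d\bigr)^2 ,
\]
which is of order $r^{2d}$.

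For $|\alpha\cap\beta|=1$, write $\alpha=\{p,i,j\}$ and $\beta=\{p,k,l\}$ with $p$ the shared index. On the joint event, $X_i$ and $X_j$ both lie in $B_g(X_p,2r)$, since they are within graph distance two of $X_p$ inside $\alpha$. Similarly $X_k,X_l\in B_g(X_p,2r)$. Conditioning on $X_p$ and using independence of the four remaining points bounds the probability by $(\|f\|_\infty c_V(2r)^d)^4$, which is $Cr^{4d}$.

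For $|\alpha\cap\beta|=2$, write $\alpha=\{p,q,i\}$ and $\beta=\{p,q,k\}$. On the joint event all four points lie within distance $2r$ of $X_p$, giving a naive bound of $r^{3d}$: condition on $X_p$, then require $X_q$, $X_i$ and $X_k$ each to lie in $B_g(X_p,2r)$. This already matches the claim, so no finer case analysis is needed. All constants depend only on $M,g,f,d$ and not on $\alpha,\beta$ or $n$.

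There is no real obstacle here. The one point needing care is justifying the uniform ball-volume bound for radii up to $2r$, which is immediate once $2r<r_*$ via the normal-coordinate density estimate $j_{x,\tau}=1+O(r^2)$ already used above. I would also state explicitly that the bounds hold for $r$ below $\min\{r_1/2,r_*\}$.
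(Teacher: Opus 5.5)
Your proposal is correct and follows essentially the same route as the paper: a union over the degree-two center gives the single-triple bound, and conditioning on a shared vertex, together with the observation that an active triple has graph diameter at most two, gives both overlap bounds via the uniform small-ball estimate. The only difference is that you derive the small-ball constant explicitly from $\|f\|_\infty$ and a uniform geodesic-ball volume bound, whereas the paper simply states it as a consequence of compactness and boundedness of $f$.
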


\begin{proof}
Since $M$ is compact and $f$ is bounded, there is a constant $C$ such that
\[
\Pp\{X_j\in B_g(y,2r)\}\le Cr^d
\]
uniformly in $y\in M$ and small $r$.

For one active triple, at least one vertex has degree two.  Taking a union
over the at most three possible choices of this center, fix one such choice
and condition on the location $y$ of the center.  The two remaining points
must both lie in $B_g(y,r)$.  Since they are conditionally independent,
\[
\Pp\{\text{both remaining points lie in }B_g(y,r)\mid y\}
\le C r^{2d}.
\]
After summing over the at most three possible centers and enlarging $C$,
we obtain
\[
\Pp(\eta_\alpha(r)=1)\le Cr^{2d}.
\]

Suppose $|\alpha\cap\beta|=1$, and condition on the common vertex $X_i$.
If both triples are active, each induced three-vertex graph is connected and
has graph diameter at most two.  Hence each of the four non-common vertices is
within geodesic distance at most $2r$ of $X_i$.  Thus all four non-common
vertices lie in $B_g(X_i,2r)$.  Conditioning on $X_i$ and using their
conditional independence gives
\[
\Pp(\eta_\alpha(r)\eta_\beta(r)=1)\le Cr^{4d}.
\]

Suppose $|\alpha\cap\beta|=2$, and condition on one of the shared vertices.
If both triples are active, each induced three-vertex graph is connected and
has graph diameter at most two.  Hence the other shared vertex and the two
non-shared vertices are all within geodesic distance at most $2r$ of the
chosen shared vertex.  Conditioning on that vertex and using the independence
of the other three points gives
\[
\Pp(\eta_\alpha(r)\eta_\beta(r)=1)\le Cr^{3d}.
\]
\end{proof}

\begin{proposition}\label{prop:N2-poisson}
For every fixed $T<\infty$, setting $r_{2,n}(t)=tn^{-3/(2d)}$, one has, as
$n\to\infty$,
\[
d_{\mathrm{TV}}\bigl(\mathcal L(N_2(r_{2,n}(t))),
\operatorname{Po}(\E N_2(r_{2,n}(t)))\bigr)=O(n^{-1/2})
\]
uniformly for $0\le t\le T$.
\end{proposition}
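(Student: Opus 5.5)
We apply Lemma~\ref{lem:dep-graph} to the family $(\eta_\alpha(r))_{\alpha\in\mathcal I_n}$ with $r=r_{2,n}(t)$, using the dependency graph in which $\alpha\sim\beta$ if and only if $\alpha\ne\beta$ and $\alpha\cap\beta\ne\varnothing$.

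\emph{Dependency graph.} First we check that this is a valid dependency graph. Each $\eta_\alpha$ is a measurable function of $(X_i)_{i\in\alpha}$ alone. If $A,B\subset\mathcal I_n$ have no edge between them, then $\bigcup A$ and $\bigcup B$ are disjoint index sets. By independence of the $X_i$, the families $(\eta_\alpha)_{\alpha\in A}$ and $(\eta_\beta)_{\beta\in B}$ are therefore independent. Note that $\alpha\in N(\alpha)$ by definition.

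\emph{Counting.} For fixed $\alpha$, the number of $\beta$ with $|\alpha\cap\beta|=2$ is $3(n-3)=O(n)$. The number with $|\alpha\cap\beta|=1$ is $3\binom{n-3}{2}=O(n^2)$. Hence $|N(\alpha)|=O(n^2)$.

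\emph{Bounding $b_1$.} Lemma~\ref{lem:triple-overlap} gives $p_\alpha\le Cr^{2d}$, so
\[
b_1\le |\mathcal I_n|\cdot O(n^2)\cdot C^2r^{4d}=O(n^5r^{4d}).
\]
With $r=tn^{-3/(2d)}$ we have $r^{d}=t^dn^{-3/2}$. Thus $n^5r^{4d}=t^{4d}n^{5-6}=O(n^{-1})$ uniformly for $t\le T$.

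\emph{Bounding $b_2$.} We split $b_2$ by overlap size and again use Lemma~\ref{lem:triple-overlap}. The pairs with $|\alpha\cap\beta|=1$ contribute
\[
O(n^3\cdot n^2\cdot r^{4d})=O(n^5r^{4d})=O(n^{-1}).
\]
The pairs with $|\alpha\cap\beta|=2$ contribute
\[
O(n^3\cdot n\cdot r^{3d})=O(n^4\,t^{3d}n^{-9/2})=O(n^{-1/2}).
\]
This last term is the dominant error. It is the source of the $O(n^{-1/2})$ rate and of the restriction $d>6$ in Theorem~\ref{thm:S2}.

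\emph{Conclusion.} Since $(1-e^{-\lambda})/\lambda\le1$, Lemma~\ref{lem:dep-graph} yields
\[
d_{\mathrm{TV}}\le b_1+b_2=O(n^{-1/2}),
\]
uniformly for $0\le t\le T$. All constants depend only on $M,g,f,d,T$. At $t=0$ both distributions are $\delta_0$, using the endpoint convention, so the bound holds trivially there.

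\emph{Main obstacle.} The only delicate point is uniformity of the overlap bounds in Lemma~\ref{lem:triple-overlap} for small $r$. This is ensured because $r_{2,n}(t)\le Tn^{-3/(2d)}\to0$ uniformly in $t\in[0,T]$. Hence for $n$ large the radius lies below the threshold at which that lemma applies, and the constant $C$ there does not depend on $t$.
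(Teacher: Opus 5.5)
Your proposal is correct and follows essentially the same route as the paper's proof. Both use the dependency graph $\alpha\sim\beta\iff\alpha\ne\beta,\ \alpha\cap\beta\ne\varnothing$ and the overlap bounds of Lemma~\ref{lem:triple-overlap}, arriving at $C(n^4r^{3d}+n^5r^{4d})=O_T(t^{3d}n^{-1/2}+t^{4d}n^{-1})$. The only cosmetic difference is that you absorb the self-term $\sum_\alpha p_\alpha^2$ into the count $|N(\alpha)|=O(n^2)$ for $b_1$, whereas the paper bounds that term separately.
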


\begin{proof}
Let $\mathcal I_n:=\binom{[n]}{3}$ and use the indicators $\eta_\alpha(r)$
defined in Section~\ref{sec:main-results}.
Use the dependency graph in which $\alpha$ and $\beta$ are adjacent iff
$\alpha\cap\beta\ne\varnothing$ and $\alpha\ne\beta$. 
Since $\eta_\alpha(r)$ depends only on $(X_i)_{i\in\alpha}$, disjoint
triples give independent indicators.  Hence this is a dependency graph.
Lemma~\ref{lem:dep-graph}
gives
\[
\begin{aligned}
d_{\mathrm{TV}}\bigl(\mathcal L(N_2(r)),\operatorname{Po}(\E N_2(r))\bigr)
\le C\Bigg(&\sum_\alpha\sum_{\beta\sim\alpha}\E[\eta_\alpha\eta_\beta]
+\sum_\alpha\sum_{\beta\in N(\alpha)}\E\eta_\alpha\,\E\eta_\beta\Bigg).
\end{aligned}
\]
There are $O(n^5)$ ordered pairs of triples sharing exactly one vertex and
$O(n^4)$ ordered pairs sharing exactly two vertices.  By
Lemma~\ref{lem:triple-overlap},
\[
\sum_\alpha\sum_{\beta\sim\alpha}\E[\eta_\alpha\eta_\beta]
\le C(n^5r^{4d}+n^4r^{3d}).
\]
For the product-of-marginals term, separate the self term from the neighboring term.
Since $|\mathcal I_n|=O(n^3)$ and $\E\eta_\alpha\le Cr^{2d}$,
\[
\sum_\alpha(\E\eta_\alpha)^2\le Cn^3r^{4d}.
\]
The number of ordered neighboring pairs with $\alpha\ne\beta$ is $O(n^5)$, and
therefore
\[
\sum_\alpha\sum_{\substack{\beta\in N(\alpha)\\beta\ne\alpha}}
\E\eta_\alpha\,\E\eta_\beta
\le Cn^5r^{4d}.
\]
Thus the full product-of-marginals term is bounded by
$C(n^3r^{4d}+n^5r^{4d})$, which is $O(n^5r^{4d})$ when
$r=t n^{-3/(2d)}$ with $t\in[0,T]$.
Therefore
\[
d_{\mathrm{TV}}\bigl(\mathcal L(N_2(r)),\operatorname{Po}(\E N_2(r))\bigr)
\le C(n^4r^{3d}+n^5r^{4d}).
\]
For $r=r_{2,n}(t)=tn^{-3/(2d)}$ and $0\le t\le T$, we have
\[
n^4r^{3d}+n^5r^{4d}
=t^{3d}n^{-1/2}+t^{4d}n^{-1}
=O_T(n^{-1/2}).
\]
\end{proof}

\begin{proposition}
\label{prop:N2-zero-count}
Under the standing assumptions, assume \(d>6\), and set
\[
r_{2,n}(t):=t n^{-3/(2d)}.
\]
Then, as \(n\to\infty\), for every \(T<\infty\), uniformly for
\(0\le t\le T\),
\[
\begin{aligned}
\log\Pp\{N_2(r_{2,n}(t))=0\}
={}&
-A_2(d)t^{2d}\int_M f^3\,d\mathrm{vol}_g\\
&+
C_2(d)t^{2d+2}n^{-3/d}
\left\{
\int_M f\|\grad f\|_g^2\,d\mathrm{vol}_g
+
\frac16\int_M f^3\Scal_g\,d\mathrm{vol}_g
\right\}\\
&+
o(n^{-3/d}).
\end{aligned}
\]
\end{proposition}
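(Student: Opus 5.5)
The plan is to combine Corollary~\ref{cor:N2-exp}, Proposition~\ref{prop:N2-poisson}, and Lemma~\ref{lem:zero-transfer}, taking $W_n=N_2(r_{2,n}(t))$, $a_n=n^{-3/d}$, and $\eps_n=n^{-1/2}$. Since $d>6$, we have $n^{-1/2}=o(n^{-3/d})$, which is exactly the hypothesis $\eps_n=o(a_n)$ required in Lemma~\ref{lem:zero-transfer}. The work therefore reduces to checking that the mean $\lambda_n(t):=\E N_2(r_{2,n}(t))$ has the expansion $\lambda_0(t)+\lambda_1(t)a_n+o(a_n)$ uniformly in $t\in[0,T]$, and that it is uniformly bounded.

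First I substitute $r=r_{2,n}(t)=tn^{-3/(2d)}$ into the second form of Corollary~\ref{cor:N2-exp}. This gives $n^3r^{2d}=t^{2d}$ and $r^2=t^2n^{-3/d}$, so
\[
\lambda_n(t)=A_2(d)t^{2d}\int_M f^3\,d\mathrm{vol}_g
-C_2(d)t^{2d+2}n^{-3/d}\,\mathcal G_3(f;g)
+t^{2d}\,o(r^2)+O(n^2r^{2d}).
\]
The last error is $O(t^{2d}n^{-1})$, and since $d>6$ gives $3/d<1$, this is $o(n^{-3/d})$. The only point needing care is uniformity in $t$. The $o(r^2)$ term in Theorem~\ref{thm:three-point-master} is independent of $n$: by Proposition~\ref{prop:three-point-anchored} it is bounded by $\Vol_g(M)r^2\omega_H(r)$ with $\omega_H$ nondecreasing. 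Because $r_{2,n}(t)\le Tn^{-3/(2d)}$, I get $t^{2d}r^2\omega_H(r)\le T^{2d+2}n^{-3/d}\omega_H(Tn^{-3/(2d)})=o(n^{-3/d})$ uniformly on $[0,T]$. At $t=0$ everything vanishes, which is consistent with the endpoint convention. Thus $\lambda_0(t)=A_2t^{2d}\int f^3$ and $\lambda_1(t)=-C_2t^{2d+2}\mathcal G_3$, both bounded on $[0,T]$, so $\lambda_n$ is uniformly bounded for large $n$.

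Next, Proposition~\ref{prop:N2-poisson} supplies the total-variation bound $O(n^{-1/2})$ uniformly for $t\in[0,T]$. Lemma~\ref{lem:zero-transfer} in its uniform form then gives $\log\Pp\{N_2=0\}=-\lambda_0(t)-\lambda_1(t)n^{-3/d}+o(n^{-3/d})$. The sign flip on $\lambda_1$ produces the displayed $+C_2(d)t^{2d+2}n^{-3/d}\{\cdots\}$ term.

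The main obstacle is not in this proposition itself but in the uniformity bookkeeping. Specifically, one has to confirm that the remainder from Proposition~\ref{prop:three-point-anchored} is controlled by a modulus evaluated at $r\le Tn^{-3/(2d)}$, so that it can be made $o(n^{-3/d})$ simultaneously for all $t$. One also has to confirm that the lower bound $e^{-\lambda_n}\ge c$ in Lemma~\ref{lem:zero-transfer} holds uniformly. I would handle the first point with the monotone modulus $\omega_H$ as above, and the second by noting that $\sup_{t\le T,n}\lambda_n(t)<\infty$ follows from the expansion.
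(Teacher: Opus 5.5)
Your proposal is correct and follows the paper's proof essentially step for step. You substitute $r=r_{2,n}(t)$ into Corollary~\ref{cor:N2-exp}, control the remainder uniformly in $t$ via the nondecreasing modulus from Proposition~\ref{prop:three-point-anchored} evaluated at $Tn^{-3/(2d)}$, absorb the $O(n^{-1})$ binomial correction and the $O(n^{-1/2})$ total-variation error using $d>6$, and finish with the uniform form of Lemma~\ref{lem:zero-transfer}; the only cosmetic difference is that the paper treats $t=0$ separately by noting $N_2(0)=0$ almost surely, which is what your remark that everything vanishes at $t=0$ amounts to.
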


\begin{proof}
We combine Corollary~\ref{cor:N2-exp},
Proposition~\ref{prop:N2-poisson}, and
Lemma~\ref{lem:zero-transfer}.

For the active-triple weight \(\mathfrak h_2\), let
\(\omega:=\omega_1\) be the nondecreasing modulus supplied by
Proposition~\ref{prop:three-point-anchored}.  Since
\(\|\mathfrak h_2\|_\infty=1\),
\[
\sup_{0\le t\le T}
t^{2d+2}\omega(tn^{-3/(2d)})
\le
T^{2d+2}\omega(Tn^{-3/(2d)})
\longrightarrow0.
\]
Thus the remainder in Corollary~\ref{cor:N2-exp} is uniform for
\(0\le t\le T\).

Substituting
$r=r_{2,n}(t)=t n^{-3/(2d)}$
into Corollary~\ref{cor:N2-exp} gives
\[
\begin{aligned}
\E N_2(r_{2,n}(t))
={}&
A_2(d)t^{2d}\int_M f^3\,d\mathrm{vol}_g\\
&-
C_2(d)t^{2d+2}n^{-3/d}
\left\{
\int_M f\|\grad f\|_g^2\,d\mathrm{vol}_g
+
\frac16\int_M f^3\Scal_g\,d\mathrm{vol}_g
\right\}\\
&+
o(n^{-3/d})
+
O_T(n^{-1}),
\end{aligned}
\]
uniformly for \(0\le t\le T\).  Here the \(O_T(n^{-1})\) term is the
finite-size correction arising from
\[
\binom{n}{3}
=
\frac{n^3}{6}+O(n^2).
\]

Since \(d>6\), we have
$n^{-1}=o(n^{-3/d})$ and $n^{-1/2}=o(n^{-3/d})$.
Thus \(O_T(n^{-1})\) term may be absorbed into the
\(o(n^{-3/d})\) remainder.

For convenience, define
\[
\lambda_0(t)
:=
A_2(d)t^{2d}\int_M f^3\,d\mathrm{vol}_g
\]
and
\[
\lambda_1(t)
:=
-C_2(d)t^{2d+2}
\left\{
\int_M f\|\grad f\|_g^2\,d\mathrm{vol}_g
+
\frac16\int_M f^3\Scal_g\,d\mathrm{vol}_g
\right\}.
\]
Then
\[
\sup_{0\le t\le T}
\left|
\E N_2(r_{2,n}(t))
-
\lambda_0(t)
-
\lambda_1(t)n^{-3/d}
\right|
=
o(n^{-3/d}).
\]

On the other hand, Proposition~\ref{prop:N2-poisson} gives
\[
\sup_{0\le t\le T}
d_{\mathrm{TV}}\!\left(
\mathcal L(N_2(r_{2,n}(t))),
\operatorname{Po}(\E N_2(r_{2,n}(t)))
\right)
=
O(n^{-1/2})
=
o(n^{-3/d}).
\]
The expectation expansion also shows that
\[
\sup_n\sup_{0\le t\le T}
\E N_2(r_{2,n}(t))
<
\infty.
\]
Hence Lemma~\ref{lem:zero-transfer} applies uniformly on \([0,T]\).
For \(0<t\le T\), it follows that
\[
\log\Pp\{N_2(r_{2,n}(t))=0\}
=
-\lambda_0(t)
-\lambda_1(t)n^{-3/d}
+
o(n^{-3/d}).
\]
Substituting the definitions of \(\lambda_0\) and \(\lambda_1\) gives
\[
\begin{aligned}
\log\Pp\{N_2(r_{2,n}(t))=0\}
={}&
-A_2(d)t^{2d}\int_M f^3\,d\mathrm{vol}_g\\
&+
C_2(d)t^{2d+2}n^{-3/d}
\left\{
\int_M f\|\grad f\|_g^2\,d\mathrm{vol}_g
+
\frac16\int_M f^3\Scal_g\,d\mathrm{vol}_g
\right\}\\
&+
o(n^{-3/d}).
\end{aligned}
\]
The remainder is uniform for \(0<t\le T\).
At \(t=0\),
\[
r_{2,n}(0)=0
\]
and \(N_2(0)=0\) almost surely under the continuous sampling law, so
\[
\log\Pp\{N_2(0)=0\}=0.
\]
The displayed expansion is therefore exact at \(t=0\) as well.  This proves
the assertion uniformly on \([0,T]\).
\end{proof}

Theorem~\ref{thm:S2} now follows immediately from
Proposition~\ref{prop:N2-zero-count} and the identity
\[
\{S_{2,n}>r\}=\{N_2(r)=0\}.
\]

\section{Statistical estimation from path--triangle contrasts}
\label{sec:statistical-inference}

This section proves Theorems~\ref{thm:contrast-consistency}
and~\ref{thm:contrast-dense-clt}, and
Corollary~\ref{cor:euler-inference}.  For each $n$, we use the exact Hoeffding decomposition of the order-three
$U$-statistic with radius $r_n$; see, for example,
\cite[Chapter~5]{Serfling1980}.  The point specific to the present problem is
that the normalization $J_{P_3}^{-1}$ versus $J_{K_3}^{-1}$ cancels the
leading one-point projection.

\subsection{Overlap bounds}

For a fixed admissible weight $\mathfrak h$, write
\[
\xi_\alpha=\xi_{\alpha,\mathfrak h}(r),
\qquad
U=U_{\mathfrak h}(r).
\]
Writing $\xi_{\mathfrak h,r}(x,y,z)$ for the corresponding symmetric
three-point kernel, define
\[
\begin{aligned}
m_{\mathfrak h,r}(x)
&:=\E\!\left[
\xi_{\{1,2,3\},\mathfrak h}(r)\mid X_1=x
\right] \\
&=
\int_M\int_M
\xi_{\mathfrak h,r}(x,y,z)
f(y)f(z)\,
d\mathrm{vol}_g(y)\,d\mathrm{vol}_g(z).
\end{aligned}
\]
Let
\[
\theta_{\mathfrak h,r}
:=
\E\xi_{\{1,2,3\},\mathfrak h}(r),
\]
and let $h_{1,r}$, $h_{2,r}$, and $h_{3,r}$ denote the canonical
Hoeffding projections of the symmetric three-point kernel.  Explicitly,
\[
h_{1,r}(x)
=
m_{\mathfrak h,r}(x)-\theta_{\mathfrak h,r},
\]
and, writing
\[
g_{2,r}(x,y)
:=
\E[\xi_{\{1,2,3\},\mathfrak h}(r)
\mid X_1=x,X_2=y],
\]
\[
h_{2,r}(x,y)
=
g_{2,r}(x,y)
-\theta_{\mathfrak h,r}
-h_{1,r}(x)-h_{1,r}(y).
\]
The third projection is defined by
\[
\begin{aligned}
h_{3,r}(x,y,z)
={}&
\xi_{\mathfrak h,r}(x,y,z)-\theta_{\mathfrak h,r}\\
&-h_{1,r}(x)-h_{1,r}(y)-h_{1,r}(z)\\
&-h_{2,r}(x,y)-h_{2,r}(x,z)-h_{2,r}(y,z).
\end{aligned}
\]

We write
\[
R_{\mathfrak h,n}(r)
:=
(n-2)\sum_{1\le i<j\le n}h_{2,r}(X_i,X_j)
+
\sum_{1\le i<j<k\le n}h_{3,r}(X_i,X_j,X_k)
\]
for the remainder after removing the first Hoeffding projection.
The canonical components are mutually orthogonal in $L^2$.

For random variables $Y$ and $Z$, we use the conventions
\[
\operatorname{Var}(Y)
:=
\E\!\left[(Y-\E Y)^2\right]
=
\E[Y^2]-(\E Y)^2
\]
and
\[
\operatorname{Cov}(Y,Z)
:=
\E\!\left[(Y-\E Y)(Z-\E Z)\right]
=
\E[YZ]-\E[Y]\E[Z].
\]
In particular, since
\[
U=\sum_{\alpha\in\binom{[n]}3}\xi_\alpha,
\]
we have
\[
\operatorname{Var}U
=
\sum_{\alpha,\beta\in\binom{[n]}3}
\operatorname{Cov}(\xi_\alpha,\xi_\beta).
\]
The following lemma bounds these covariance contributions according to the
overlap size $|\alpha\cap\beta|$.

\begin{lemma}
\label{lem:statistical-overlap-bound}
For each fixed admissible $\mathfrak h$, there are $C<\infty$ and $r_0>0$
such that, for $n\ge3$ and $0<r<r_0$,
\begin{equation}
\operatorname{Var}U_{\mathfrak h}(r)
\le C\left(n^3r^{2d}+n^4r^{3d}+n^5r^{4d}\right).
\label{eq:general-U-variance}
\end{equation}
If $J_{\mathfrak h}(d)=0$, then the last term improves to
\begin{equation}
\operatorname{Var}U_{\mathfrak h}(r)
\le C\left(n^3r^{2d}+n^4r^{3d}+n^5r^{4d+4}\right).
\label{eq:zero-mass-U-variance}
\end{equation}
Moreover, after removing the first Hoeffding projection, the remainder
$R_{\mathfrak h,n}(r)$ satisfies
\begin{equation}
\operatorname{Var}R_{\mathfrak h,n}(r)
\le C\left(n^3r^{2d}+n^4r^{3d}\right).
\label{eq:Hoeffding-remainder-variance}
\end{equation}
\end{lemma}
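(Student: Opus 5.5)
We decompose $\operatorname{Var}U=\sum_{\alpha,\beta}\operatorname{Cov}(\xi_\alpha,\xi_\beta)$ by the overlap size $|\alpha\cap\beta|\in\{0,1,2,3\}$. Disjoint triples give zero covariance. For $|\alpha\cap\beta|=3$ there are $O(n^3)$ pairs, and $|\operatorname{Cov}|\le\E\xi_\alpha^2\le\|\mathfrak h\|_\infty^2\Pp(\xi_\alpha\ne0)\le Cr^{2d}$. This follows from the first bound of Lemma~\ref{lem:triple-overlap}, since $\xi_\alpha\ne0$ forces $\eta_\alpha=1$. For $|\alpha\cap\beta|=2$ there are $O(n^4)$ pairs. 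Using $|\xi_\alpha\xi_\beta|\le H^2\eta_\alpha\eta_\beta$, the third bound of Lemma~\ref{lem:triple-overlap} gives $\E|\xi_\alpha\xi_\beta|\le Cr^{3d}$, and the product of means is $O(r^{4d})\le Cr^{3d}$. For $|\alpha\cap\beta|=1$ there are $O(n^5)$ pairs, and the same reasoning with the second bound gives $Cr^{4d}$. Summing yields \eqref{eq:general-U-variance}.

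For the zero-mass improvement, we refine only the one-overlap term. If $\alpha\cap\beta=\{i\}$, conditional independence of the two disjoint pairs of non-shared points given $X_i$ gives
\[
\operatorname{Cov}(\xi_\alpha,\xi_\beta)=\operatorname{Var}\bigl(m_{\mathfrak h,r}(X_1)\bigr)=\E h_{1,r}(X_1)^2 .
\]
Proposition~\ref{prop:three-point-anchored} shows that $m_{\mathfrak h,r}(x)=r^{2d}\mathcal I_{\mathfrak h}(r,x)/f(x)$. When $J_{\mathfrak h}=0$, the anchored expansion gives $\mathcal I_{\mathfrak h}(r,x)=r^2\{\cdots\}+\operatorname{Rem}$, with $|\operatorname{Rem}|\le r^2\omega_H(r)$ uniformly in $x$. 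Since $f$ is bounded below, $\sup_x|m_{\mathfrak h,r}(x)|\le Cr^{2d+2}$. Hence $\E h_{1,r}^2\le\E m_{\mathfrak h,r}^2\le Cr^{4d+4}$, and the $O(n^5)$ one-overlap pairs contribute $Cn^5r^{4d+4}$. The other overlap classes are unchanged, which gives \eqref{eq:zero-mass-U-variance}.

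For \eqref{eq:Hoeffding-remainder-variance}, we use orthogonality of the canonical components:
\[
\operatorname{Var}R_{\mathfrak h,n}=(n-2)^2\tbinom n2\E h_{2,r}^2+\tbinom n3\E h_{3,r}^2 .
\]
The $h_3$ term is bounded by a constant times $\operatorname{Var}\xi\le Cr^{2d}$, because canonical projections are $L^2$-contractions of the centered kernel. For $h_2$ we have $\E h_{2,r}^2\le\E g_{2,r}(X_1,X_2)^2$. Moreover, $\E g_{2,r}^2=\E[\xi_{\{1,2,3\}}\xi_{\{1,2,4\}}]\le Cr^{3d}$ by the two-overlap bound. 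Therefore $n^4\E h_2^2\le Cn^4r^{3d}$, and the total is $C(n^3r^{2d}+n^4r^{3d})$.

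The main point needing care is the zero-mass step. It requires a uniform-in-$x$ sup bound on the one-point projection, not just an integrated one. This is exactly what the uniform modulus $\omega_H$ in Proposition~\ref{prop:three-point-anchored} supplies. The remaining steps are routine counting combined with Lemma~\ref{lem:triple-overlap}.
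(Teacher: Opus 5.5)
Your proposal is correct and follows essentially the same route as the paper: the same overlap-size decomposition, the same one-overlap refinement via $\operatorname{Cov}(\xi_\alpha,\xi_\beta)=\operatorname{Var}(m_{\mathfrak h,r}(X_1))$ with the uniform-in-$x$ anchored expansion when $J_{\mathfrak h}(d)=0$, and the same orthogonal Hoeffding variance formula for $R_{\mathfrak h,n}(r)$. The differences are minor: you import the support bounds from Lemma~\ref{lem:triple-overlap} via $|\xi_\alpha|\le\|\mathfrak h\|_\infty\eta_\alpha$; you bound the general one-overlap covariance directly by $O(r^{4d})$ rather than through the anchored expansion; and you obtain $\E g_{2,r}^2\le Cr^{3d}$ from the identity $\E g_{2,r}^2=\E[\xi_{\{1,2,3\}}\xi_{\{1,2,4\}}]$ rather than from the paper's pointwise bound $|g_{2,r}|=O(r^d)$ on $\{d_g\le 2r\}$.
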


\begin{proof}
Since
$U=\sum_{\alpha\in\binom{[n]}3}\xi_\alpha$,
we have
\[
\operatorname{Var}U
=
\sum_{\alpha,\beta\in\binom{[n]}3}
\operatorname{Cov}(\xi_\alpha,\xi_\beta).
\]
We bound the summands according to the overlap size
$|\alpha\cap\beta|$.  Since $\mathfrak h$ is fixed, the corresponding
kernel is uniformly bounded.  We also use throughout the uniform
small-ball estimate
\[
\sup_{x\in M}
\mathbb P\{X_1\in B_g(x,cr)\}
=O(r^d)
\]
for each fixed $c>0$.

If $\alpha\cap\beta=\varnothing$, then $\xi_\alpha$ and $\xi_\beta$
are independent, and hence their covariance is zero.

Suppose first that $\alpha=\beta$.  By connected support, after choosing
one of the three sample points as an anchor, the other two lie in its
geodesic $2r$-ball whenever $\xi_\alpha\ne0$.  Hence
\[
\mathbb P\{\xi_\alpha\ne0\}=O(r^{2d}),
\]
and boundedness of the kernel gives
\[
\operatorname{Var}(\xi_\alpha)
\le \E\xi_\alpha^2
=O(r^{2d}).
\]
There are $\binom n3=O(n^3)$ such pairs.

Suppose next that $|\alpha\cap\beta|=2$.  If both kernel values are
nonzero, the two shared points are at distance at most $2r$, and,
conditional on the shared points, each of the two remaining points lies
in a union of a bounded number of geodesic balls of radius $2r$.
Therefore
\[
\E|\xi_\alpha\xi_\beta|=O(r^{3d}).
\]
Moreover, the preceding one-kernel support bound gives
$\E|\xi_\alpha|=O(r^{2d})$, and hence
\[
\begin{aligned}
|\operatorname{Cov}(\xi_\alpha,\xi_\beta)|
&\le
\E|\xi_\alpha\xi_\beta|
+|\E\xi_\alpha|\,|\E\xi_\beta|\\
&=O(r^{3d}).
\end{aligned}
\]
There are $O(n^4)$ ordered pairs of this type.

If $|\alpha\cap\beta|=1$, relabel the common sample point as $X_1$.
Conditional on $X_1$, the two kernel values are independent, so
\[
\E[\xi_\alpha\xi_\beta\mid X_1]
=
\E[\xi_\alpha\mid X_1]\E[\xi_\beta\mid X_1]
=
m_{\mathfrak h,r}(X_1)^2.
\]
Consequently,
\[
\operatorname{Cov}(\xi_\alpha,\xi_\beta)
=
\operatorname{Var}\bigl(m_{\mathfrak h,r}(X_1)\bigr).
\]
By definition of the normalized anchored integral,
\[
m_{\mathfrak h,r}(x)
=
\frac{r^{2d}}{f(x)}\mathcal I_{\mathfrak h}(r,x).
\]
Since $f$ is bounded below away from zero, the anchored expansion in
Proposition~\ref{prop:three-point-anchored} yields, uniformly in $x$,
\[
m_{\mathfrak h,r}(x)=O(r^{2d}),
\qquad
\operatorname{Var}(m_{\mathfrak h,r}(X_1))=O(r^{4d}).
\]
There are $O(n^5)$ ordered pairs sharing one index.  Combining the four
overlap cases proves \eqref{eq:general-U-variance}.

If $J_{\mathfrak h}(d)=0$, the leading anchored term vanishes and the
same expansion gives
\[
\sup_{x\in M}|m_{\mathfrak h,r}(x)|=O(r^{2d+2}),
\qquad
\operatorname{Var}(m_{\mathfrak h,r}(X_1))=O(r^{4d+4}),
\]
which proves \eqref{eq:zero-mass-U-variance}.

Set
\[
R_{\mathfrak h,n}(r)
=
(n-2)\sum_{1\le i<j\le n}h_{2,r}(X_i,X_j)
+\sum_{1\le i<j<k\le n}h_{3,r}(X_i,X_j,X_k),
\]
and the two canonical sums are orthogonal.

Let
\[
g_{2,r}(x,y)
:=
\E[\xi_{\{1,2,3\},\mathfrak h}(r)
\mid X_1=x,X_2=y].
\]
The projection property gives
\[
\E h_{2,r}(X_1,X_2)^2
\le
\E g_{2,r}(X_1,X_2)^2.
\]
By connected support, $g_{2,r}(x,y)=0$ when $d_g(x,y)>2r$.
When $d_g(x,y)\le2r$, the third point must lie in a union of a bounded
number of geodesic $2r$-balls, and hence
\[
|g_{2,r}(x,y)|=O(r^d)
\]
uniformly.  Therefore
\[
\begin{aligned}
\E g_{2,r}(X_1,X_2)^2
&\le
Cr^{2d}\,
\mathbb P\{d_g(X_1,X_2)\le2r\}\\
&=O(r^{3d}),
\end{aligned}
\]
and thus
\[
\E h_{2,r}^2=O(r^{3d}).
\]
Similarly, by the projection property and the first support estimate,
\[
\E h_{3,r}^2
\le
\E\xi_{\{1,2,3\},\mathfrak h}(r)^2
=
O(r^{2d}).
\]
The exact variance formula for canonical Hoeffding components now gives
\[
\operatorname{Var}R_{\mathfrak h,n}(r)
=
\binom n2(n-2)^2\E h_{2,r}^2
+\binom n3\E h_{3,r}^2
\le
C\{n^4r^{3d}+n^3r^{2d}\},
\]
which is \eqref{eq:Hoeffding-remainder-variance}.
\end{proof}

Define the path--triangle contrast weight by
\[
\mathfrak c_d
:=
\frac{\mathfrak h_{P_3}}{J_{P_3}(d)}
-
\frac{\mathfrak h_{K_3}}{J_{K_3}(d)},
\]
and set
\[
C_n(r)
:=
U_{\mathfrak c_d}(r)
=
\frac{U_{P_3}(r)}{J_{P_3}(d)}
-
\frac{U_{K_3}(r)}{J_{K_3}(d)}.
\]
Since $J_{\mathfrak h}(d)$ is linear in $\mathfrak h$,
\[
J_{\mathfrak c_d}(d)
=
\frac{J_{P_3}(d)}{J_{P_3}(d)}
-
\frac{J_{K_3}(d)}{J_{K_3}(d)}
=
0.
\]
We normalize the contrast by
\[
\widehat{\mathcal G}_{3,n}(r)
:=
-\frac{C_n(r)}
{\gamma_d\binom n3 r^{2d+2}}.
\]

For clarity, we restate Theorem~\ref{thm:contrast-consistency} in the
following equivalent form.

\begin{theorem}
\label{thm:contrast-consistency-revise}
There exist constants $C<\infty$ and $r_0>0$ such that, for every
$n\ge3$ and $0<r<r_0$, the path--triangle contrast satisfies
\begin{equation}
\operatorname{Var} C_n(r)
\le
C\left(
n^3r^{2d}
+n^4r^{3d}
+n^5r^{4d+4}
\right).
\end{equation}
Accordingly, its normalized estimator obeys
\begin{equation}
\operatorname{Var}\widehat{\mathcal G}_{3,n}(r)
\le
C\left(
\frac1{n^3r^{2d+4}}
+
\frac1{n^2r^{d+4}}
+
\frac1n
\right).
\end{equation}

Now let $r_n\downarrow0$.  If the bandwidth sequence satisfies
\begin{equation}
n^3r_n^{2d+4}\longrightarrow\infty,
\qquad
n^2r_n^{d+4}\longrightarrow\infty,
\end{equation}
then the normalized contrast consistently estimates the intrinsic functional:
\[
\widehat{\mathcal G}_{3,n}(r_n)
\xrightarrow{\Pp}
\mathcal G_3(f;g).
\]
\end{theorem}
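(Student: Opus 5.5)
The plan is to apply Lemma~\ref{lem:statistical-overlap-bound} to the contrast weight $\mathfrak c_d$, then normalize, and finally combine the bias expansion \eqref{eq:contrast-mean} with Chebyshev's inequality.

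\emph{Step 1: variance of $C_n(r)$.} The contrast weight $\mathfrak c_d$ is a fixed admissible symmetric weight. It corresponds to $\lambda_P=1/J_{P_3}(d)$ and $\lambda_K=-1/J_{K_3}(d)$, and it vanishes whenever $a+b+c\le1$. By linearity of $J$ we have $J_{\mathfrak c_d}(d)=0$. The zero-mass bound \eqref{eq:zero-mass-U-variance} therefore applies directly and gives the stated bound on $\operatorname{Var}C_n(r)$. The mechanism behind it is that, in the one-overlap covariance $\operatorname{Var}(m_{\mathfrak c_d,r}(X_1))$, the anchored expansion of Proposition~\ref{prop:three-point-anchored} loses its $r^{2d}f(x)^3$ leading term. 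Hence $\sup_x|m_{\mathfrak c_d,r}(x)|=O(r^{2d+2})$ uniformly, and this is where the improvement from $n^5r^{4d}$ to $n^5r^{4d+4}$ comes from.

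\emph{Step 2: variance of the estimator.} Dividing by $\gamma_d^2\binom n3^2r^{4d+4}$ is legitimate because $\gamma_d>0$ by Lemma~\ref{lem:contrast-nondegeneracy}. Using $\binom n3\ge cn^3$ for $n\ge3$, the three terms become $n^{-3}r^{-2d-4}$, $n^{-2}r^{-d-4}$ and $n^{-1}$, which is \eqref{eq:G3-estimator-variance-bound}.

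\emph{Step 3: consistency.} Write
\[
\widehat{\mathcal G}_{3,n}(r_n)-\mathcal G_3(f;g)
=\bigl(\widehat{\mathcal G}_{3,n}(r_n)-\E\widehat{\mathcal G}_{3,n}(r_n)\bigr)
+\bigl(\E\widehat{\mathcal G}_{3,n}(r_n)-\mathcal G_3(f;g)\bigr).
\]
For the bias term, apply Theorem~\ref{thm:three-point-master} to $\mathfrak c_d$, using $J_{\mathfrak c_d}=0$ and $M_{\mathfrak c_d}=D_d$ by linearity of $M$. This gives \eqref{eq:contrast-mean} with a remainder $o(r^2)$ inside the bracket that is independent of $n$, so
\[
\E\widehat{\mathcal G}_{3,n}(r)=\mathcal G_3(f;g)+o(1)\quad\text{as } r\downarrow0,
\]
along any sequence $n=n(r)$. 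Along $r_n\downarrow0$ the bias therefore tends to zero. For the fluctuation term, the bandwidth conditions \eqref{eq:contrast-consistency-bandwidth} together with $1/n\to0$ send the variance bound from Step~2 to zero, and Chebyshev's inequality gives convergence in probability.

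The only point needing care is the uniformity in Step 1. The one-overlap covariance must use the sup-norm bound on $m_{\mathfrak c_d,r}$, which comes from the uniform-in-$x$ remainder $r^2\omega_H(r)$ in Proposition~\ref{prop:three-point-anchored}. A pointwise expansion would not suffice for this. Everything else is routine bookkeeping.
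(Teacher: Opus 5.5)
Your proposal is correct and follows essentially the same route as the paper: you apply the zero-mass case \eqref{eq:zero-mass-U-variance} of Lemma~\ref{lem:statistical-overlap-bound} to $\mathfrak c_d$, rescale by $\gamma_d^2\binom n3^2 r^{4d+4}$, and combine the $o(1)$ bias from \eqref{eq:contrast-mean} with Chebyshev's inequality. Your added remarks, namely $\gamma_d>0$ via Lemma~\ref{lem:contrast-nondegeneracy} and the need for the uniform-in-$x$ remainder to get $\sup_x|m_{\mathfrak c_d,r}(x)|=O(r^{2d+2})$, spell out justifications that the paper's shorter proof leaves implicit.
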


\begin{proof}
Apply Lemma~\ref{lem:statistical-overlap-bound} to $\mathfrak c_d$; its
Euclidean mass is zero.  This gives
\eqref{eq:contrast-variance-bound}.

By the definition
\[
\widehat{\mathcal G}_{3,n}(r)
=
-\frac{C_n(r)}
{\gamma_d\binom n3 r^{2d+2}},
\]
we have
\[
\operatorname{Var}\widehat{\mathcal G}_{3,n}(r)
=
\frac{\operatorname{Var}C_n(r)}
{\gamma_d^2\binom n3^2r^{4d+4}}.
\]
Since $\binom n3\asymp n^3$ for $n\ge3$,
\eqref{eq:contrast-variance-bound} yields
\[
\operatorname{Var}\widehat{\mathcal G}_{3,n}(r)
\le
C\left(
\frac1{n^3r^{2d+4}}
+
\frac1{n^2r^{d+4}}
+
\frac1n
\right),
\]
which is \eqref{eq:G3-estimator-variance-bound}.

By \eqref{eq:contrast-mean},
\[
\E\widehat{\mathcal G}_{3,n}(r_n)
\longrightarrow\mathcal G_3(f;g).
\]
Furthermore, the right-hand side of
\eqref{eq:G3-estimator-variance-bound} tends to zero under
\eqref{eq:contrast-consistency-bandwidth}.  Chebyshev's inequality completes
the proof.
\end{proof}

\subsection{The H\'ajek-projection-dominant limit}

Let
\[
\theta_r:=\E\xi_{\{1,2,3\},\mathfrak c_d}(r),
\qquad
h_{1,r}(x):=m_{\mathfrak c_d,r}(x)-\theta_r.
\]
The Hoeffding decomposition for the unnormalized order-three statistic is
\begin{equation}
C_n(r)-\E C_n(r)
=\binom{n-1}{2}\sum_{i=1}^nh_{1,r}(X_i)+R_n(r),
\label{eq:contrast-Hoeffding-decomposition}
\end{equation}
where $R_n(r)$ is orthogonal to the first projection.

Since
\[
m_{\mathfrak c_d,r}(x)
=\frac{r^{2d}}{f(x)}\mathcal I_{\mathfrak c_d}(r,x)
\]
and $J_{\mathfrak c_d}(d)=0$, the pointwise expansion in
Proposition~\ref{prop:three-point-anchored} gives, uniformly in $x\in M$,
\begin{equation}
\frac{m_{\mathfrak c_d,r}(x)}{D_dr^{2d+2}}
=q_f(x)+o(1).
\label{eq:conditional-contrast-expansion}
\end{equation}
Integrating against $f\,d\mathrm{vol}_g$ and using
\eqref{eq:contrast-mean} gives
\begin{equation}
\frac{h_{1,r}(x)}{D_dr^{2d+2}}
=\psi_f(x)+o(1)
\label{eq:first-projection-limit}
\end{equation}
uniformly in $x$.

For clarity, we record the following equivalent reformulation of
Theorem~\ref{thm:contrast-dense-clt}, emphasizing the root-$n$ fluctuation
of the normalized contrast around its exact expectation.

\begin{theorem}
\label{thm:contrast-dense-clt-revise}
Let $r_n\downarrow0$ and assume that
\[
n r_n^{d+4}\longrightarrow\infty.
\]
Then the exact-expectation-centered normalized path--triangle contrast has
the root-$n$ limit
\[
\sqrt n\left(
\widehat{\mathcal G}_{3,n}(r_n)
-\E\widehat{\mathcal G}_{3,n}(r_n)
\right)
\xrightarrow{\mathcal D}
N(0,\sigma_f^2),
\]
where
\[
\sigma_f^2
=
4d^2\int_M\psi_f(x)^2f(x)\,d\mathrm{vol}_g(x).
\]
The limiting normal distribution is understood to be degenerate at zero
when $\sigma_f^2=0$.
\end{theorem}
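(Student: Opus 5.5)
We prove Theorem~\ref{thm:contrast-dense-clt-revise} from the Hoeffding decomposition \eqref{eq:contrast-Hoeffding-decomposition}. Let $\binom n3\asymp n^3/6$ and $\binom{n-1}{2}\sim n^2/2$. The normalized estimator then satisfies
\[
\sqrt n\bigl(\widehat{\mathcal G}_{3,n}-\E\widehat{\mathcal G}_{3,n}\bigr)
=-\frac{\sqrt n\,\binom{n-1}{2}}{\gamma_d\binom n3 r_n^{2d+2}}\sum_{i=1}^n h_{1,r_n}(X_i)
-\frac{\sqrt n\,R_n(r_n)}{\gamma_d\binom n3 r_n^{2d+2}}.
\]
The coefficient of the linear term is $-\frac{3}{\gamma_d n^{1/2} r_n^{2d+2}}(1+o(1))$. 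Since $\gamma_d=3D_d/(2d)$, the linear term equals
\[
-(1+o(1))\,\frac{2d}{\sqrt n}\sum_{i=1}^n\frac{h_{1,r_n}(X_i)}{D_d r_n^{2d+2}}.
\]
By \eqref{eq:first-projection-limit}, the summands are $\psi_f(X_i)+\epsilon_n(X_i)$ with $\sup_x|\epsilon_n(x)|\to0$.

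\textbf{Linear term.} The variables $\epsilon_n(X_i)$ are centered, because $h_{1,r}$ and $\psi_f$ both have mean zero under $f\,d\mathrm{vol}_g$ (the latter by \eqref{eq:psi-f-definition} and the integration-by-parts identity). Their second moment tends to zero, so $n^{-1/2}\sum_i\epsilon_n(X_i)\to0$ in $L^2$. The classical i.i.d.\ CLT then gives
\[
n^{-1/2}\sum_i\psi_f(X_i)\xrightarrow{\mathcal D} N\Bigl(0,\int_M\psi_f^2f\,d\mathrm{vol}_g\Bigr),
\]
where $\psi_f$ is bounded and continuous because $f\in C^4$. Multiplying by $-2d$ gives the variance $\sigma_f^2=4d^2\int\psi_f^2f$. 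Slutsky's lemma handles the $1+o(1)$ factors. When $\sigma_f^2=0$, $\psi_f\equiv0$ on $M$ because $f>0$, and the limit is degenerate, consistent with the stated convention.

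\textbf{Remainder.} Apply \eqref{eq:Hoeffding-remainder-variance} to the fixed admissible weight $\mathfrak c_d$:
\[
\operatorname{Var}R_n(r)\le C(n^3r^{2d}+n^4r^{3d}).
\]
The remainder contribution therefore has second moment at most
\[
C\,\frac{n(n^3r_n^{2d}+n^4r_n^{3d})}{n^6r_n^{4d+4}}
=C\Bigl(\frac{1}{n^2r_n^{2d+4}}+\frac{1}{nr_n^{d+4}}\Bigr).
\]
Both terms vanish under \eqref{eq:dense-bandwidth-condition}. For the first, $n^2r_n^{2d+4}=(nr_n^{d+4})\cdot nr_n^{d}$, and $nr_n^d\ge nr_n^{d+4}\to\infty$ for $r_n<1$. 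The remainder is thus $o_{\Pp}(1)$, and Slutsky's lemma completes the proof.

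\textbf{Main obstacle.} The delicate step is the uniform expansion \eqref{eq:first-projection-limit} of the first projection. It requires that the anchored remainder in Proposition~\ref{prop:three-point-anchored} be $o(r^2)$ uniformly in $x$ for the signed weight $\mathfrak c_d$. The proposition provides exactly this through the uniform modulus $\omega_H$ with $H=\|\mathfrak c_d\|_\infty$. It also requires that the centering constant match. Integrating the expansion and using \eqref{eq:contrast-mean} gives $\theta_r/(D_dr^{2d+2})\to-\frac{3}{2d}\mathcal G_3$, which is exactly the constant in $\psi_f$.
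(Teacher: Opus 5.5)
Your proposal is correct and follows the paper's proof essentially step for step. You use the same ingredients: the factor $3/\gamma_d=2d/D_d$, the uniform first-projection expansion, which makes the linear term an $L^2$-negligible perturbation of $-2d\,n^{-1/2}\sum_i\psi_f(X_i)$, and the Hoeffding-remainder variance bound, closed by $n^2r_n^{2d+4}=(nr_n^{d+4})(nr_n^{d})\to\infty$. One cosmetic difference is that $\binom{n-1}{2}/\binom n3=3/n$ holds exactly, so your $(1+o(1))$ factors and the accompanying Slutsky step are unnecessary.
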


\begin{proof}
Since
\[
\frac{\binom{n-1}{2}}{\binom n3}=\frac3n,
\qquad
\frac{3D_d}{\gamma_d}=2d,
\]
the first term in \eqref{eq:contrast-Hoeffding-decomposition} contributes
\[
-\frac{2d}{n}\sum_{i=1}^n\psi_f(X_i)+o_{L^2}(n^{-1/2})
\]
to
$\widehat{\mathcal G}_{3,n}(r)-
\E\widehat{\mathcal G}_{3,n}(r)$.  Indeed, the uniform remainder in
\eqref{eq:first-projection-limit} has centered $L^2$ norm tending to zero.
Because $\psi_f$ is bounded on the compact manifold and the centered
remainder is uniformly $o(1)$ in $L^2(f\,d\mathrm{vol}_g)$, the triangular
array differs in $L^2$ from the fixed i.i.d.\ sum
$-2d n^{-1}\sum_{i=1}^n\psi_f(X_i)$.  The ordinary i.i.d.\ central limit
theorem therefore gives the normal limit with variance
\eqref{eq:G3-asymptotic-variance}.

It remains to check that the higher Hoeffding terms are negligible.  By
\eqref{eq:Hoeffding-remainder-variance},
\[
\begin{aligned}
&n\,\operatorname{Var}\left(
\frac{R_n(r_n)}
{\gamma_d\binom n3r_n^{2d+2}}
\right)\\
&\hspace{3em}\le
C\left(
\frac1{n^2r_n^{2d+4}}+
\frac1{nr_n^{d+4}}
\right)\longrightarrow0.
\end{aligned}
\]
For the first term, note that
\[
n^2r_n^{2d+4}
=
(n r_n^d)(n r_n^{d+4})
\longrightarrow\infty,
\]
because
\[
n r_n^d
=
(n r_n^{d+4})r_n^{-4}
\longrightarrow\infty.
\]
This proves \eqref{eq:G3-dense-clt}.
\end{proof}

\begin{remark}[Degeneracy under uniform sampling]
Suppose $f\equiv V^{-1}$, where $V=\Vol_g(M)$, and put
\[
\overline{\Scal}_g:=\frac1V\int_M\Scal_g\,d\mathrm{vol}_g.
\]
Then
\[
\psi_f(x)=\frac{\overline{\Scal}_g-\Scal_g(x)}{4dV^2}
\]
and hence
\begin{equation}
\sigma_f^2
=\frac1{4V^5}\int_M
\bigl(\Scal_g-\overline{\Scal}_g\bigr)^2\,d\mathrm{vol}_g.
\label{eq:uniform-asymptotic-variance}
\end{equation}
Thus the root-$n$ limit is nondegenerate exactly when the scalar curvature is
not constant.  In particular, for constant-curvature surfaces the theorem
still holds, but its limit at the root-$n$ scale is degenerate; the consistency
statement in Corollary~\ref{cor:euler-inference} is unaffected.  This
also shows that the root-$n$ fluctuation around the exact mean records spatial
variation of scalar curvature, rather than the Euler characteristic itself.
\end{remark}

\subsection{Surface Euler characteristic}

Throughout this subsection, assume $d=2$ and uniform sampling,
\[
f\equiv V^{-1},
\qquad
V:=\Vol_g(M).
\]
By Table~\ref{tab:path-triangle-moments} and the elementary lens integrals,
\[
J_{P_3}(2)=M_{P_3}(2)=\frac{9\sqrt3\pi}{4},
\qquad
J_{K_3}(2)=\frac{\pi(4\pi-3\sqrt3)}4,
\]
and
\[
\gamma_2=\frac{\pi}{4\pi-3\sqrt3}.
\]

Define the triangle-based estimator
\[
\widehat F_{3,n}(r)
:=
\frac{U_{K_3}(r)}
{\binom n3 r^4J_{K_3}(2)}
\]
and the corresponding volume estimator
\[
\widehat V_n(r):=
\begin{cases}
\widehat F_{3,n}(r)^{-1/2},
&\widehat F_{3,n}(r)>0,\\
1,
&\widehat F_{3,n}(r)=0.
\end{cases}
\]
Define also
\[
\widehat\chi_n(r)
:=
\frac{3}{2\pi}\widehat V_n(r)^3
\widehat{\mathcal G}_{3,n}(r)
=
-\frac{3(4\pi-3\sqrt3)}{2\pi^2}
\frac{\widehat V_n(r)^3C_n(r)}
{\binom n3r^6}.
\]

Finally, let
\[
\mathcal E_{\mathrm{all}}
:=
\{2,1,0,-1,-2,\ldots\},
\]
and define
\[
\widehat\chi_n^{\mathrm{all}}(r)
\in
\operatorname*{argmin}_{k\in\mathcal E_{\mathrm{all}}}
|\widehat\chi_n(r)-k|,
\]
with deterministic tie-breaking.

For clarity, we record the following equivalent reformulation of
Corollary~\ref{cor:euler-inference}.

\begin{corollary}
\label{cor:euler-inference-revise}
Let $r_n\downarrow0$ and assume that
\[
n^2r_n^6\longrightarrow\infty.
\]
Then
\[
\widehat V_n(r_n)\xrightarrow{\Pp}V,
\qquad
\widehat\chi_n(r_n)\xrightarrow{\Pp}\chi(M).
\]
Moreover,
\[
\Pp\{
\widehat\chi_n^{\mathrm{all}}(r_n)=\chi(M)
\}
\longrightarrow1.
\]
Thus projection onto the admissible Euler characteristics gives exact
recovery with probability tending to one.
\end{corollary}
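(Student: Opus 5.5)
The plan is to establish, in order, (i) consistency of $\widehat F_{3,n}(r_n)$ for $V^{-2}$, (ii) consistency of $\widehat{\mathcal G}_{3,n}(r_n)$ for $\mathcal G_3(f;g)$, and (iii) the Gauss--Bonnet identification. Continuous mapping and integer rounding then finish the argument.

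\emph{Step (i): the triangle estimator.} Apply Theorem~\ref{thm:three-point-master} to $\mathfrak h_{K_3}$ with $d=2$, $f\equiv V^{-1}$. This gives
\[
\E\widehat F_{3,n}(r)=\int_M f^3\,d\mathrm{vol}_g+O(r^2)=V^{-2}+O(r^2).
\]
By \eqref{eq:general-U-variance} of Lemma~\ref{lem:statistical-overlap-bound},
\[
\operatorname{Var}\widehat F_{3,n}(r)\le C\bigl(n^{-3}r^{-4}+n^{-2}r^{-2}+n^{-1}\bigr).
\]
Under $n^2r_n^6\to\infty$ we have $n^2r_n^2\to\infty$ and $n^3r_n^4\to\infty$, since $n^3r_n^4\ge n^2r_n^6\cdot n r_n^{-2}$. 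Hence $\widehat F_{3,n}(r_n)\to V^{-2}>0$ in probability. On the event $\widehat F_{3,n}>0$, whose probability tends to one, continuity of $s\mapsto s^{-1/2}$ at $V^{-2}$ gives $\widehat V_n(r_n)\to V$ in probability.

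\emph{Step (ii): the contrast estimator.} Invoke Theorem~\ref{thm:contrast-consistency-revise} with $d=2$. Its bandwidth conditions are $n^3r_n^{8}\to\infty$ and $n^2r_n^{6}\to\infty$. The second is exactly the hypothesis. The first follows from it, because $n^3r_n^8=(n^2r_n^6)\,n r_n^2$ and $nr_n^2\ge (n^2r_n^6)^{1/2}\,r_n^{-1}\to\infty$. This gives $\widehat{\mathcal G}_{3,n}(r_n)\to\mathcal G_3(f;g)$ in probability.

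\emph{Step (iii): Gauss--Bonnet.} With $f$ constant the gradient term in $\mathcal G_3$ vanishes, leaving
\[
\mathcal G_3=\tfrac16V^{-3}\int_M\Scal_g\,d\mathrm{vol}_g.
\]
On a surface $\Scal_g=2K$, and Gauss--Bonnet gives $\int_M K\,d\mathrm{vol}_g=2\pi\chi(M)$. Hence $\mathcal G_3=\tfrac{2\pi}{3}V^{-3}\chi(M)$, so $\tfrac{3}{2\pi}V^3\mathcal G_3=\chi(M)$. Continuous mapping on the product of the two limits gives $\widehat\chi_n(r_n)\to\chi(M)$ in probability.

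The explicit constants are also needed. One must verify that $J_{P_3}(2)=M_{P_3}(2)=9\sqrt3\pi/4$ and $J_{K_3}(2)=\pi(4\pi-3\sqrt3)/4$ from Table~\ref{tab:path-triangle-moments}, using $L_2(s)$ and the integrals $T_0(2)$, $T_2(2)$. Then $\gamma_2=\pi/(4\pi-3\sqrt3)$ follows, and the second displayed form of $\widehat\chi_n$ is algebra. I expect these lens-integral evaluations to be the main computational obstacle, though they are routine.

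\emph{Exact recovery.} Since $\chi(M)\le2$ lies in $\mathcal E_{\mathrm{all}}$, on the event $|\widehat\chi_n-\chi(M)|<1/2$ the nearest admissible integer is $\chi(M)$, whatever the tie-breaking. That event has probability tending to one.
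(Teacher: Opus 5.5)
Your proposal is correct and follows the paper's proof essentially step for step. You get consistency of $\widehat F_{3,n}(r_n)$ from the triangle expectation expansion together with the general variance bound of Lemma~\ref{lem:statistical-overlap-bound}, and consistency of $\widehat{\mathcal G}_{3,n}(r_n)$ from Theorem~\ref{thm:contrast-consistency-revise} after checking $n^3r_n^8\to\infty$. You then use the Gauss--Bonnet identity $\mathcal G_3(f;g)=2\pi\chi(M)/(3V^3)$, continuous mapping, and nearest-integer projection, exactly as the paper does; the only difference is that you justify the bandwidth implications with slightly different factorizations.
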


\begin{proof}
The values of $J_{P_3}(2)$ and $J_{K_3}(2)$ follow from
Table~\ref{tab:path-triangle-moments} and the elementary lens integrals
\[
T_0(2)=\frac{\pi(4\pi-3\sqrt3)}4,
\qquad
T_2(2)=\frac{\pi(8\pi-9\sqrt3)}{12}.
\]
Indeed, since $\omega_2=\pi$,
\[
\begin{aligned}
J_{P_3}(2)
&=
3\bigl(\pi^2-T_0(2)\bigr)
=
\frac{9\sqrt3\pi}{4},\\
M_{P_3}(2)
&=
2\pi^2-3T_2(2)
=
\frac{9\sqrt3\pi}{4},
\end{aligned}
\]
while
\[
J_{K_3}(2)=T_0(2)
=
\frac{\pi(4\pi-3\sqrt3)}4,
\qquad
M_{K_3}(2)=T_2(2)
=
\frac{\pi(8\pi-9\sqrt3)}{12}.
\]
Hence
\[
\begin{aligned}
D_2
&=
\frac{M_{P_3}(2)}{J_{P_3}(2)}
-
\frac{M_{K_3}(2)}{J_{K_3}(2)}\\
&=
1-
\frac{8\pi-9\sqrt3}{3(4\pi-3\sqrt3)}
=
\frac{4\pi}{3(4\pi-3\sqrt3)}.
\end{aligned}
\]
Therefore
\[
\gamma_2
=
\frac{3D_2}{4}
=
\frac{\pi}{4\pi-3\sqrt3}.
\]

Since $f\equiv V^{-1}$, we have
\[
\int_M f^3\,d\mathrm{vol}_g=V^{-2},
\qquad
\grad f=0.
\]
Hence the triangle expectation expansion gives
\[
\E\widehat F_{3,n}(r)=V^{-2}+O(r^2).
\]
Since $d=2$, Lemma~\ref{lem:statistical-overlap-bound},
applied to the triangle kernel, gives
\[
\operatorname{Var}U_{K_3}(r)
\le
C\left(
n^3r^4+n^4r^6+n^5r^8
\right).
\]
Therefore,
\[
\begin{aligned}
\operatorname{Var}\widehat F_{3,n}(r)
&\le
C\frac{
n^3r^4+n^4r^6+n^5r^8
}{
\binom n3^2r^8
}\\
&\le
C\left(
\frac1{n^3r^4}
+\frac1{n^2r^2}
+\frac1n
\right),
\end{aligned}
\]
where we used $\binom n3\asymp n^3$ and absorbed the fixed positive
constant $J_{K_3}(2)^{-2}$ into $C$.
Condition~\eqref{eq:surface-consistency-bandwidth} implies
$n r_n^2\to\infty$, and therefore
\[
n^3r_n^4\to\infty,
\qquad
n^2r_n^2\to\infty,
\qquad
n^3r_n^8
=
(n^2r_n^6)(n r_n^2)
\to\infty.
\]
Since $\E\widehat F_{3,n}(r_n)\to V^{-2}$ and $\operatorname{Var}\widehat F_{3,n}(r_n)\to0$,
we have
\[
\widehat F_{3,n}(r_n)\xrightarrow{\Pp}V^{-2}.
\]
The continuous mapping theorem then yields
\[
\widehat V_n(r_n)\xrightarrow{\Pp}V.
\]
The two conditions of Theorem~\ref{thm:contrast-consistency} also hold for
$d=2$, so
\[
\widehat{\mathcal G}_{3,n}(r_n)
\xrightarrow{\Pp}
\mathcal G_3(f;g).
\]
Since $\mathcal G_3(f;g)
=
\frac{2\pi\chi(M)}{3V^3}$,
Slutsky's theorem gives
\[
\widehat\chi_n(r_n)
\xrightarrow{\Pp}
\chi(M).
\]
The Euler characteristics of closed connected surfaces belong to
$\mathcal E_{\mathrm{all}}$, and distinct values in
$\mathcal E_{\mathrm{all}}$ are separated by distance one.  Therefore,
consistency of $\widehat\chi_n(r_n)$ implies
\[
\Pp\{
\widehat\chi_n^{\mathrm{all}}=\chi(M)
\}
\longrightarrow1.
\]
\end{proof}

\section{Discussion and limitations}
\label{sec:discussion}

The path--triangle contrast identifies the intrinsic functional
\[
\mathcal G_3(f;g)
=
\int_M f\|\grad f\|_g^2\,d\mathrm{vol}_g
+\frac16\int_M f^3\Scal_g\,d\mathrm{vol}_g.
\]
Under uniform sampling on a closed surface, Gauss--Bonnet reduces the
second-order geometric term to a multiple of $\chi(M)$, leading to the
estimator in Corollary~\ref{cor:euler-inference}.  In general, however,
path and triangle counts alone do not separate the density-gradient and
scalar-curvature contributions.

The expectation expansion gives an $o(1)$ normalized bias, which is
sufficient for consistency, whereas the central limit theorem is centered
at the exact expectation.  A target-centered limit theorem, studentization,
or confidence intervals would require sharper bias and variance estimates.

The restriction $d>6$ in the degree-two threshold theorem comes from the
separation between the overlap error $O(n^{-1/2})$ and the geometric
correction $n^{-3/d}$.  At $d=6$, determining the next term would require a
more refined connected-cluster or factorial-cumulant analysis.

Finally, the boundaryless assumption avoids order-$r$ boundary corrections.
The argument also exploits the special structure of connected
three-vertex kernels; for $k\ge3$, several moving internal-chord boundaries
appear, and the present two-parameter reduction need not persist.

\appendix

\section{One-dimensional reduction of the \texorpdfstring{$k=2$}{k=2} constants}
\label{app:k2-one-dimensional}
Assume \(d\ge2\) throughout this appendix.
This appendix gives the elementary one-dimensional calculation underlying
the positive representation of \(C_2(d)\).
Equivalently, since \(M_2(d)=4d\,C_2(d)\), put
\[
J_d=\int_0^{1/2}(1-x^2)^{(d+1)/2}\,dx.
\]
We prove the manifestly positive representation
\[
M_2(d)=\frac{8d\omega_d\omega_{d-1}}{d+1}J_d.
\]
Recall that
\[
M_2(d)
=
\frac{4d}{d+2}\omega_d^2
-
2d\omega_d
\int_0^1 s^{d+1}L_d(s)\,ds.
\]
Also recall that
\[
L_d(s)=2\omega_{d-1}\int_{s/2}^{1}h(u)\,du,
\qquad h(u)=(1-u^2)^{(d-1)/2}.
\]
Fubini--Tonelli theorem implies that
\[
\begin{aligned}
\int_0^1s^{d+1}L_d(s)\,ds
&=2\omega_{d-1}\int_0^1s^{d+1}\int_{s/2}^{1}h(u)\,du\,ds \\
&=\frac{2\omega_{d-1}}{d+2}
\left[\int_0^{1/2}(2u)^{d+2}h(u)\,du+
\int_{1/2}^{1}h(u)\,du\right].
\end{aligned}
\]
Let
\[
I_0=\int_0^{1/2}h(u)\,du,
\qquad I_1=\int_{1/2}^{1}h(u)\,du,
\qquad I_2=\int_0^{1/2}u^{d+2}h(u)\,du.
\]
Since $\omega_d=2\omega_{d-1}(I_0+I_1)$, the expression for $M_2(d)$ becomes
\[
M_2(d)=\frac{4d\omega_d\omega_{d-1}}{d+2}
\left(2I_0+I_1-2^{d+2}I_2\right).
\]
It remains to identify the bracket.  The substitutions $u=\cos\theta$ and
$y=\sin(\theta/2)$ give
\[
I_1=\int_0^{\pi/3}\sin^d\theta\,d\theta
=2^{d+1}\int_0^{1/2}y^d h(y)\,dy.
\]
Let
\[
P_d=\int_0^{1/2}u^dh(u)\,du,
\qquad
F_d=\frac12\left(\frac34\right)^{(d+1)/2}.
\]
Integrating
\[
\frac{d}{du}\left(u^{d+1}(1-u^2)^{(d+1)/2}\right)
=(d+1)u^d(1-u^2)^{(d+1)/2}-(d+1)u^{d+2}h(u)
\]
over $[0,1/2]$ gives
\[
P_d-2I_2=\frac{2^{-d}F_d}{d+1}.
\]
Thus
\[
I_1-2^{d+2}I_2=2^{d+1}(P_d-2I_2)=\frac{2F_d}{d+1}.
\]
On the other hand, integrating
\[
\frac{d}{du}\left(u(1-u^2)^{(d+1)/2}\right)
=(1-u^2)^{(d+1)/2}-(d+1)u^2h(u)
\]
over $[0,1/2]$ yields
\[
F_d=(d+2)J_d-(d+1)I_0.
\]
Consequently, we have
\[
2I_0+I_1-2^{d+2}I_2
=2I_0+\frac{2F_d}{d+1}
=\frac{2(d+2)}{d+1}J_d,
\]
and hence
\[
M_2(d)=\frac{8d\omega_d\omega_{d-1}}{d+1}J_d
\]
holds.

\begingroup
\small
\emergencystretch=3em
\sloppy
\bibliographystyle{abbrvurl}
\bibliography{references_cited_only}
\endgroup

\section*{Statements and Declarations}

\noindent\textbf{Funding.}
This work was supported by the Japan Society for the Promotion of Science
(JSPS) KAKENHI Grant Number 23K12507.

\medskip
\noindent\textbf{Competing interests.}
The authors declare that they have no competing interests.

\medskip
\noindent\textbf{Author contributions.}
All authors contributed to the conception of the study, the mathematical
analysis, and the preparation of the manuscript. All authors read and approved
the final manuscript.

\medskip
\noindent\textbf{Data availability.}
No datasets were generated or analysed during the current study.

\end{document}